\documentclass[11pt]{article}
\usepackage[utf8]{inputenc}
\usepackage[T1]{fontenc}
\usepackage{lmodern}
\usepackage{geometry}
\usepackage{amsmath, amsthm, amssymb, mathtools}
\usepackage{enumitem}
\usepackage{tikz-cd}
\usepackage{quiver}
\usepackage[colorlinks=true, linkcolor=blue, citecolor=red, urlcolor=blue]{hyperref}
\usepackage{xurl}

\author{David Forsman\\
Université catholique de Louvain\\
\texttt{david.forsman@uclouvain.be}}
\title{A Syntactic Approach to Ulmer's Bialgebras}
\date{21 July, 2026}

\theoremstyle{plain}
\newtheorem{theorem}{Theorem}[section]
\newtheorem{lemma}[theorem]{Lemma}
\newtheorem{corollary}[theorem]{Corollary}
\newtheorem{proposition}[theorem]{Proposition}

\theoremstyle{definition}
\newtheorem{definition}[theorem]{Definition}

\newtheorem{examples}[theorem]{Examples}
\newtheorem{remark}[theorem]{Remark}

\numberwithin{equation}{section}

\newcommand{\C}{\mathcal{C}}
\newcommand{\D}{\mathcal{D}}
\newcommand{\E}{\mathcal{E}}
\newcommand{\K}{\mathcal{K}}
\newcommand{\Term}{\mathrm{Term}}

\newcommand{\Mor}{\mathrm{Mor}}

\newcommand{\Hopf}{\mathbf{Hopf}}
\newcommand{\Set}{\mathbf{Set}}

\newcommand{\CAT}{\mathbf{CAT}}
\newcommand{\ACC}{\mathbf{ACC}}
\newcommand{\Ins}{\mathrm{Ins}}
\newcommand{\id}{\mathrm{id}}
\newcommand{\Eq}{\mathrm{Eq}}

\newcommand{\LE}{\mathrm{LE}}
\newcommand{\RE}{\mathrm{RE}}
\newcommand{\V}{\mathcal{V}}
\newcommand{\Mon}{\mathbf{Mon}}
\newcommand{\colim}{\mathrm{colim}}

\begin{document}

\maketitle

\begin{abstract}
Ulmer introduced a semantic notion of bialgebras that unifies a broad class of algebraic and coalgebraic structures. We develop a syntactic counterpart by introducing signature pairs $(\Sigma,\sigma)$ and bialgebraic theories $T$, providing a uniform language for constructing internal bialgebras in a $2$-categorical setting. For every bialgebraic theory $T$ and $\Sigma$-model $M$ within a $2$-category with PIE limits, we construct the object $M^T$ of internal $T$-bialgebras. Our approach to bialgebras admits a general Induced Functor of Algebras Theorem extending the classical lifting of lax monoidal functors to the categories of internal monoids. Since the construction of $M^T$ is expressed entirely in terms of PIE limits, accessibility, local presentability, orthogonal factorization systems, regularity, and exactness lift along the construction $M \mapsto M^T$ under suitable assumptions.
\end{abstract}
\section{Introduction}

In his 1977 preprint, Ulmer \cite{Ulmer1977Bialgebras} introduced a semantic notion of a bialgebra to unify a broad range of algebraic and coalgebraic structures within the setting of locally presentable categories. The framework encompasses examples ranging from classical structures in universal algebra, such as groups and rings, and Eilenberg-Moore categories of monads, to bialgebraic structures arising in monoidal categories, such as internal bimonoids and Hopf algebras. Rather than treating these examples individually, Ulmer showed that many of their fundamental structural properties follow from a common semantic framework.

The present paper develops a syntactic counterpart to Ulmer's theory of bialgebras. The guiding idea is that the support functors (domains and codomains of structural maps) and natural transformations (structural equations) appearing in Ulmer's definition of a bialgebra should arise automatically from logical syntax rather than being specified individually for each notion of a bialgebra. To achieve this, we introduce multi-sorted signature pairs $(\Sigma,\sigma)$ with associated bialgebraic theories $T$. The $2$-signature $\Sigma$ fixes the structure of a $\Sigma$-model $M$, for instance, a monoidal or monadic structure, while the relative multi-sorted $1$-signature $\sigma$ and the bialgebraic theory $T$ describe the operations and equations defining the structure $M^T$ of the bialgebras within $M$. This produces a syntactic language whose interpretation in categories produces exactly the structures of bialgebras introduced by Ulmer, while automating the construction of the associated support functors and natural transformations from the syntax itself.

The central observation of this paper is that the commutative diagrams defining bialgebraic structures can be interpreted both as abstract syntax and as objectwise equations. Thus, the same diagrams that define, for example, Frobenius algebras in a monoidal category, determine a bialgebraic theory which admits an interpretation within any pseudo monoid in a $2$-category with PIE limits (products, inserters, and equifiers). We show that, for every bialgebraic theory $T$, the corresponding object $M^T$ of internal $T$-bialgebras is obtained entirely by means of strict PIE limits. Consequently, we generalize the construction $M\mapsto M^T$ from the $2$-category $\CAT$ to arbitrary $2$-categories with PIE limits. What is perhaps surprising is that the theory $T$ consists of parallel paths, representing commutative diagrams, and yet we are able to assign meaning to the notion of $T$-bialgebras even in $2$-categories whose $0$-cells do not have a clear notion of objects nor of morphisms. This framework also yields a general induced-functor theorem extending the classical lifting of lax monoidal functors to categories of internal monoids.

Expressing the construction of $M^T$ entirely in terms of PIE limits has several important structural consequences. A recent result by Lack and Tendas \cite[Theorem~5.5.]{LACK2023107196} shows that the $2$-category $\V$-$\ACC$ of $\V$-accessible categories, $\V$-accessible functors, and $\V$-natural transformations is closed under the flexible $2$-limits in $\V$-$\CAT$ of $\V$-categories. As PIE limits are flexible \cite{PowerPIE1991}, this immediately implies that, whenever the $\Sigma$-model $M$ consists of accessible categories and accessible functors, the object $M^T$ is again accessible, both in the ordinary and enriched settings.\footnote{The base $\V$ of enrichment is assumed to be a locally presentable symmetric monoidal closed category.} This extends recent accessibility results for particular classes of bialgebras \cite{agore2025categoryhopfbraces, PorstHopfLocallyPresentable, Porst01062008}.

To obtain more structural properties for the object $M^T$ of bialgebras, we prove that the canonical morphisms arising from PIE limits strictly create left and right Kan extensions under suitable assumptions. With some care, this allows local presentability to lift along the construction
$$
M \longmapsto M^T.
$$
Independently, we establish conditions under which orthogonal factorization systems lift along PIE limits, from which we derive corresponding lifting results for regularity and exactness.

The paper is organized as follows. Section~2 reviews the required $2$-categorical preliminaries. We define strict $2$-categories emphasizing strict PIE limits, which serve as the fundamental construction throughout the paper. Section~3 develops the syntax and semantics of signature pairs, culminating in the Induced Functor of Algebras Theorem (Theorem~\ref{thm:induced_morphism_between_algebras}), which establishes conditions under which a lax, oplax, or strong morphism $M\to N$ of $\Sigma$-models induces a morphism $M^T\to N^T$ between the corresponding objects of bialgebras. Section~4 establishes conditions for the strict creation of left and right Kan extensions along PIE limits (Theorem~\ref{thm:pie_kan_creation}), applies the recent work of Lack and Tendas to prove the lifting of accessibility (Theorem~\ref{thm:enriched_accessability_lifting}), and combines these results to obtain the lifting of local presentability (Theorem~\ref{thm:LiftingLP}). Finally, Section~5 studies the lifting of orthogonal factorization systems along PIE limits (Theorem~\ref{thm:pie_ofs_creation}) and applies this result to derive corresponding lifting theorems for regularity and exactness, culminating in Corollary~\ref{cor:StructuresLiftedToBialgebras}.

\section{2-Categories}
Since the construction of internal bialgebras developed in this paper is expressed entirely in terms of strict PIE limits, we begin by establishing the required $2$-categorical framework. We briefly review enriched categories, concentrating on the case where the enriching category is the category of locally small categories, thereby recovering the standard notion of a $2$-category. For a comprehensive treatment of enriched category theory, we refer the reader to \cite{Kelly:Enriched}.

We then recall the definitions of strict products, inserters, and equifiers, collectively known as the strict PIE limits. These $2$-limits form the basic building blocks for all subsequent constructions. Throughout the paper, we work in a strict $2$-categorical setting. This entails no essential loss of generality, since strict PIE limits are flexible limits, and therefore their constructions transport to bicategorical settings via the bicategorical strictification \cite{PowerPIE1991, GordonPowerStreet1995CoherenceTricategories}. 

\subsection{Enriched Categories and 2-Categories}

\begin{definition}[$\V$-Category]
   Let $\V = (V,\otimes, I)$ be a monoidal category. A $\V$-enriched category $\K$ consists of the following data:\footnote{We assume the existence of a Grothendieck universe $U$ and its successor universe $U^+$. The elements and subsets of $U$ are called small and large, respectively, and the elements of $U^+$ are called very large. Note that small sets are both large and very large.}
   \begin{itemize}
       \item A set $S$ of \textbf{objects}, often denoted $a,b,c,\dots$.
       \item For any pair of objects $a, b\in S$, a \textbf{hom-object} $\K(a,b)$ in $\V$.
       \item For any objects $a, b, c$, a \textbf{composition morphism} $* = *_{a,b,c}\colon \K(b,c) \otimes \K(a,b) \to \K(a,c)$ in $\V$ and a chosen identity $\id_a\colon I\to \K(a,a)$. Furthermore, we require that the composition is associative and $\id_a$ defines the identity for each $a\in S$.
   \end{itemize}
   When $\V$ is the cartesian closed category of very large categories, a very large $\V$-category $\K$ is called a $2$-\textbf{category}.\footnote{We call a $\V$-category small/large/very large if the underlying set of objects is.} We denote the $2$-category of large $\V$-categories by $\V$-$\CAT$. Unpacking this $\V$-enriched definition, a $2$-category $\K$ is equipped with three forms of composition:
   \begin{enumerate}
       \item \textbf{Vertical composition} of transformations: A morphism in $\K(a,b)$ is denoted $\eta\colon f\Rightarrow g\colon a\to b$, where $f,g$ and $\eta$ are called morphisms and transformations of $\K$, respectively. For transformations $\alpha \colon f \Rightarrow g\colon a\to b$ and $\beta \colon g \Rightarrow h\colon a\to b$ in $\K$, their composite in $\K(a,b)$ is written $\beta \bullet \alpha \colon f \Rightarrow h$.
       \item \textbf{Horizontal composition} of morphisms: For $f \colon a \to b$ and $g \colon b \to c$, their composite is denoted $gf \colon a \to c$.
       \item \textbf{Horizontal composition} of transformations: For $\alpha \colon f \Rightarrow g\colon a\to b$ and $\beta \colon h \Rightarrow k\colon b\to c$, their composite is denoted $\beta * \alpha \colon hf \Rightarrow kg\colon  a\to c$. Note that the bifunctoriality of $*$ yields the interchange equations:
       $$
       \beta*\alpha = \beta g\bullet h\alpha = k\alpha\bullet \beta f
       $$where we use the whiskering notation $\beta h\coloneqq \beta*\id_h$ and $h\alpha\coloneqq \id_h*\alpha$. We identify the functor $a\colon 1\to \K(a,a)$ with the morphism $a(!)$, where $!$ is the unique object of the terminal category $1$. 
   \end{enumerate}
\end{definition}

\subsection{PIE limits}
We define the so-called strict PIE limits (Products, Inserters, and Equifiers).
\begin{definition}[PIE Limits in a $2$-Category]
   Let $\K$ be a $2$-category. We define the following $2$-dimensional limits by their explicit universal properties on both $1$-cells and $2$-cells:
   \begin{itemize}
       \item \textbf{Products:} The product of a family of objects $(c_i)_{i \in I}$ is an object $\prod_i c_i$ equipped with projection morphisms $\pi_j \colon \prod_i c_i \to c_j$ satisfying:
       \begin{enumerate}[label=(\roman*)]
           \item For any family of morphisms $(f_i\colon x\to c_i)_{i\in I}$ in $\K$ there is a unique morphism $f\colon x\to \prod_i c_i$ so that $f_i = \pi_i\circ f$. We often denote $f = (f_i)_i$.
           \item For any family $(\alpha_i\colon f_i\Rightarrow g_i\colon x\to c_i)_{i\in I}$ of transformations in $\K$, there exists a unique transformation $\alpha\colon (f_i)_i\Rightarrow  (g_i)_i\colon x\to \prod_i c_i$ so that $\alpha_i = \pi_i* \alpha$, often denoted $\alpha = (\alpha_i)_i$. 
       \end{enumerate}
       
       \item \textbf{Inserters:} Let $f,g\colon c\to d$ be parallel morphisms in $\K$. The \textbf{inserter} of $f$ and $g$ is an object $\Ins(f,g)$ in $\K$ equipped with a morphism $u\colon\Ins(f,g) \to c$ and a universal transformation $\phi \colon f \circ u \Rightarrow g \circ u$ satisfying:
       \begin{enumerate}[label=(\roman*)]
           \item For any morphism $h'\colon x\to c$ with a transformation $\alpha\colon fh'\Rightarrow gh'$, there exists a unique morphism $h\colon x\to \Ins(f,g)$ so that $h' = uh$ and $\alpha = \phi*h$. 
           \item For any pair of morphisms $h_1,h_2\colon x\to \Ins(f,g)$ with a transformation $\alpha'\colon uh_1\Rightarrow uh_2$ making the diagram 
           $$
           \begin{tikzcd}
fuh_1 \arrow[d, "f*\alpha'"', Rightarrow] \arrow[r, "\phi*h_1", Rightarrow] & guh_1 \arrow[d, "g*\alpha'", Rightarrow] \\
fuh_2 \arrow[r, "\phi*h_2"', Rightarrow]                                   & guh_2                                  
\end{tikzcd}
           $$
           commute, there is a unique transformation $\alpha\colon h_1\Rightarrow h_2$ so that $\alpha' = u*\alpha$.
       \end{enumerate}
       
       \item \textbf{Equifiers:} Let $\eta, \theta \colon f\Rightarrow g\colon c\to d$ be parallel transformations in $\K$. The \textbf{equifier} of $\eta$ and $\theta$ is an object $\Eq(\eta, \theta)$ equipped with a morphism $u \colon \Eq(\eta, \theta) \to c$ such that $\eta * u = \theta * u$, satisfying:
       \begin{enumerate}[label=(\roman*)]
           \item Any morphism $h'\colon x\to c$, satisfying $\eta*h' = \theta*h'$, has a unique lift $h\colon x\to \Eq(\eta,\theta)$ along $u$, meaning $h' = uh$.
           \item The functor $u*(-)\colon \K(x,\Eq(\eta,\theta))\to \K(x,c)$ is fully faithful for every object $x$ in $\K$.
       \end{enumerate}
   \end{itemize}
   We say that $\K$ is \textbf{PIE-complete} if it has small products, inserters and equifiers.
\end{definition}

\begin{remark}[PIE Limits in $\CAT$]
   In the $2$-category $\CAT$, small products are standard cartesian products of categories, and inserters and equifiers are computed explicitly:
   \begin{itemize}
       \item Let $F, G \colon \C \to \mathcal{D}$ be parallel functors. The \textbf{inserter category} $\operatorname{Ins}(F, G)$ has as objects pairs $(c, \phi\colon Fc\to Gc)$, where $c$ is an object of $\C$ and $\phi$ is a morphism in $\mathcal{D}$. A morphism $f\colon (c,\alpha)\to (d,\beta)$ in $\Ins(F,G)$ consists of a morphism $f\colon c\to d$ in $\C$ making the diagram
       $$
       \begin{tikzcd}
Fc \arrow[d, "F(f)"'] \arrow[r, "\alpha"] & Gc \arrow[d, "G(f)"] \\
Fd \arrow[r, "\beta"']                             & Gd                  
       \end{tikzcd}
       $$
       commute. The universal functor $U \colon \Ins(F, G) \to \C$ is the forgetful functor $f\colon (c,\alpha)\to (d,\beta)\mapsto f\colon c\to d$, and the universal natural transformation is $\phi= (\alpha)_{(c,\alpha)\in\mathrm{Obj}(\Ins(F,G))}$.

       \item Let $\eta,\theta\colon F\Rightarrow G\colon \C\to \mathcal{D}$ be natural transformations. The \textbf{equifier category} $\operatorname{Eq}(\eta, \theta)$ is the full subcategory of $\C$ consisting of those objects $c \in \C$ for which the components agree $\eta_c = \theta_c$. The universal functor $E \colon \operatorname{Eq}(\eta, \theta) \to \C$ is the canonical fully faithful inclusion.
   \end{itemize}
\end{remark}
It is perhaps useful to notice that PIE limits are defined exactly so that representable $2$-functors $\K(x,-)\colon \K\to \CAT$ will respect PIE limits, where the PIE limits in $\CAT$ are a fixed construction prior to the notion of $2$-limits.

\section{Bialgebras}

In this section, we develop a multi-sorted, $2$-categorical syntactic framework to study internal algebraic and coalgebraic structures. Conceptually, our formulation of a bialgebra specialized in the $2$-category of $\CAT$ is equivalent to the notion introduced by Ulmer \cite{Ulmer1977Bialgebras}, where algebraic structures are captured by inserters and equations are enforced by equifiers. 

Specifically, for a category $\underline{A}$ equipped with an $M$-indexed family of functors pairs
$$
(F_{d\mu},F_{c\mu}\colon \underline{A}\to X_\mu)_{\mu\in M},
$$
Ulmer defined the category of pre-bialgebras $\mathrm{P\text{-}Bialg}_M(\underline{A})$ as the inserter $\Ins(F_d,F_c)$ of the product induced functors $F_d\coloneqq (F_{d\mu})_\mu$ and $F_c\coloneqq (F_{c\mu})_\mu \colon \underline{A}\to \prod_{\mu\in M} X_\mu$. Denoting the canonical forgetful functor by $V\colon \mathrm{P\text{-}Bialg}_M(\underline{A})\to \underline{A}$, and given an $R$-indexed family of parallel pairs of natural transformations:
$$
\begin{tikzcd}
\mathrm{P\text{-}Bialg}_M(\underline{A}) \arrow[d, "V"'] \arrow[r, "V"]                                                           & \underline{A} \arrow[d, "F_{cr}"] \\
\underline{A} \arrow[r, "F_{dr}"'] \arrow[ru, "\alpha_r", Rightarrow, shift left] \arrow[ru, "\beta_r"', Rightarrow, shift right] & X_r                              
\end{tikzcd}
$$ 
the category $\mathrm{Bialg}_{M,R}(\underline{A})$ of bialgebras is defined as the equifier $\Eq(\alpha,\beta)$ of the parallel natural transformations $\alpha\coloneqq (\alpha_r)_r$ and $\beta\coloneqq (\beta_r)_r\colon (F_{dr} V)_r\Rightarrow (F_{cr}V)_r\colon \mathrm{P\text{-}Bialg}_M(\underline{A})\to \prod_{r\in R}X_r$.

While this formulation is purely $2$-categorical, it requires manually constructing the support functors and natural transformations that determine the domains, codomains, and equations. For example, a monoidal structure on $\underline{A}$ already induces functors of the form $x \mapsto x^{\otimes n}$, which are the necessary building blocks to formulate internal bialgebraic structures. In Ulmer's framework, these compound functors must be selected and carried explicitly as part of the given families of support functors, without formalizing how they systematically arise from the underlying background structure. Our syntactic framework builds from a signature pair $(\Sigma, \sigma)$ and the associated term graph $G_\sigma$ these compound functors and natural transformations. For practical applications, this syntactic shortcut is useful because the logical grammar automatically allows the constructions for the support functors that Ulmer denotes $F_{d\mu},F_{c\mu}, F_{dr},F_{cr}$ and natural transformations $\alpha_r,\beta_r$ for $\mu\in M$ and $r\in R$.

We first establish the syntactic language by introducing signature pairs $(\Sigma,\sigma)$ and bialgebraic $\sigma$-theories $T$. We then define the semantic notions in PIE-complete $2$-category $\K$: a $\Sigma$-model $M$ and construct the $0$-cells $M^\sigma$ of $\sigma$-bialgebras and $M^T$ of $T$-bialgebras. Finally, we prove the Induced Functor of Algebras Theorem (Theorem~\ref{thm:induced_morphism_between_algebras}), which establishes how a lax, oplax, or strong morphism $F\colon M\to N$ between $\Sigma$-models induces a morphism $F^T\colon M^T\to N^T$ between the corresponding objects of bialgebras in any PIE-complete $2$-category $\K$. This result unifies several disjoint theorems in categorical algebra, such as the classical lifting of lax monoidal functors to categories of internal monoids.

We leave the development of $2$-algebraic equational theories for later work. The definition of a monoid does not require that the monoidal structure is coherent. For our purposes of unifying the $1$-dimensional algebras, the structure of a $2$-algebra is more fundamental than the coherence laws the $2$-algebra might internally satisfy.

\subsection{Syntax: Signature Pairs and Theories}
To motivate the syntactic constructions in this section, we can consider the syntactic structure allowing the formulation of internal monoids in a lax monoidal setting:
$$
\Sigma = (c, \quad cc\xrightarrow{\square} c,\ ()\xrightarrow{I} c, \quad (x\square y)\square z\xrightarrow{\alpha^{xyz}}x\square (y\square z),\ x\square I\xrightarrow{\rho^x} x,\ I\square x\xrightarrow{\lambda^x} x),
$$
a relative $1$-signature describing the monoid's multiplication and unit operations
\[
\sigma = (s\colon c, \quad \mu\colon s\square s\to s, \quad u\colon I\to s),
\]
and the associative and unit equations $T$. This information is packaged within a signature pair $(\Sigma,\sigma)$ and a $\sigma$-theory $T$.

Given a function $f \colon X \to Y$ between sets, we say that the set $X$ is \emph{typed} in $Y$ and write $x \colon f(x)$ for each element $x$ in $X$.

\begin{definition}[The $2$-Signature $\Sigma$]
   A \textbf{$2$-signature} $\Sigma = (C,\mathcal{F}, \mathcal{T})$ consists of the following data:
   \begin{enumerate}
       \item A small set $C$ of \textbf{category symbols}, often denoted with $c,d,c_1,c_2,\dots$.
       \item A typed small set $\mathcal{F} \to C^* \times C$ of \textbf{functor symbols} $B \colon c_1 \cdots c_n \to d$, where $C^*$ is the free monoid over $C$.
       \item A typed set $V \to C$, with countably infinite fibers, of \textbf{variable symbols} $x,y,z,\dots$. From these variables, we recursively construct a typed set $\Term^\Sigma_V \to C$ of \textbf{$\Sigma$-terms}.\footnote{A term $t\in \Term_V^\Sigma$ is either a variable $x\colon c$ in $V$, a constant $B\colon ()\to c$ (functor symbol with empty domain) or $t = B(t_0,\ldots, t_n)\colon c$ for some previously constructed terms $t_0\colon c_0,\ldots, t_n\colon c_n$ and a functor symbol $B\colon c_0\cdots c_n\to c\in \mathcal{F}$, where $n\in \mathbb{N}$ ($0\in \mathbb{N}$).} The set of \textbf{contexts} $\mathrm{Cont}_V \subset V^*$ consists of sequences where no variable occurs more than once. A term $t \in \mathrm{Term}_V^\Sigma$ is in a context $\bar{x} \in \mathrm{Cont}_V$ if all variables appearing in $t$ occur in the sequence $\bar{x}$.
       \item A small set $\mathcal{T} \to \mathrm{Cont}_V \times \mathrm{Term}^\Sigma_V \times \mathrm{Term}^\Sigma_V$ of \textbf{transformation symbols} $\alpha^{\bar{x}} \colon t_1 \to t_2$, where $t_1, t_2$ are $\Sigma$-terms of the same type in context $\bar{x}$.
   \end{enumerate}
\end{definition}

We define the internal algebraic structures on top of this background categorical universe by introducing a relative $1$-signature.

\begin{definition}[The 1-Signature $\sigma$]
   A pair $(\Sigma,\sigma)$ is called a \textbf{signature pair} if $\Sigma = (C,\mathcal{F},\mathcal{T})$ is a $2$-signature and the \textbf{1-signature} $\sigma = (S, F)$ relative to $\Sigma$ consists of:
   \begin{enumerate}
       \item A typed small set of \textbf{sorts} $S \to C$, assigning each sort $s \in S$ a type $c_s \in C$.
       \item A typed small set of \textbf{function symbols} $F \to \mathrm{Term}_{S}^\Sigma \times \mathrm{Term}_{S}^\Sigma$. Each $f \in F$ is associated with a tuple $(t_1,t_2)$, denoted $f\colon t_1 \to t_2 \colon c_f$, where $t_1, t_2\in \Term_S^\Sigma$ are terms of type $c_f$.
   \end{enumerate}
\end{definition}

We recursively generate a graph of terms to develop equations between these operations.

\begin{definition}[The Term Graph]
   Let $(\Sigma,\sigma)$ be a signature pair. We construct the \textbf{term graph} $G_\sigma$ in the following recursive way:
   \begin{itemize}
       \item The set of vertices of $G_\sigma$ is the set $\mathrm{Term}^\Sigma_S$ of $\Sigma$-terms over the set of sorts $S$ of $\sigma$.
       \item An arrow of $G_\sigma$ is recursively defined by:
       \begin{itemize}
           \item Each function symbol $f \colon t_1 \to t_2$ of $\sigma$ forms an arrow $t_1 \to t_2$ in $G_\sigma$. 
           \item For each transformation symbol $\alpha^{\bar{x}} \colon t_1 \to t_2$ in $\Sigma$, where $\bar{x} = x_1\cdots x_n$, we set $\alpha_{t_1',\dots, t_n'} \colon t_1^{\bar{x}}(t_1',\dots, t_n') \to t_2^{\bar{x}}(t_1',\dots, t_n')$ as an arrow in $G_\sigma$ for each sequence of terms $t_i' \in \mathrm{Term}_S^\Sigma$ of type $c_{x_i}$.\footnote{The term $t_i^{\bar{x}}(t_1',\ldots, t_n')$ refers to recursively defined substitution, where $t_j'$ is substituted on $x_j$ within $t_i$ for $i = 1,2$ and $j\leq n$.}
           \item For any functor symbol $B \colon c_0\dots c_n \to d$ in $\Sigma$, a sequence of terms $t_j \colon c_j$, and an arrow $g \colon t_i \to t_i'$ in $G_\sigma$, the symbol sequence 
           $$
           B(t_0,\dots, t_{i-1}, g, t_{i+1}, \dots, t_n) \colon B(t_0,\dots, t_i, \dots, t_n) \to B(t_0,\dots, t_i', \dots, t_n)
           $$
           defines an arrow in $G_\sigma$.\footnote{We use the indexing starting from $0$ in terms $B(t_0,\ldots, t_n)$ to indicate that $B$ is not a constant symbol. If the indexing starts from $1$, it implies the possibility that $B(t_1,\ldots, t_n)$ is a constant symbol with the choice $n = 0$.}
       \end{itemize}
   \end{itemize}
\end{definition}

Using this term graph, we define equations and classify theories based on their arity and coarity behaviors.

\begin{definition}[Arity, Coarity, and Bialgebraic Theories]
   Let $(\Sigma,\sigma)$ be a signature pair. A parallel path $(p, q \colon t_1 \to t_2\colon c)$ in the term graph $G_\sigma$ is called a \textbf{$\sigma$-equation}, and a set $T$ of  $\sigma$-equations is called a \textbf{bialgebraic $\sigma$-theory}. We define the arity and coarity terms associated with $T$ as follows:
   \begin{enumerate}
       \item \textbf{Function Arity and Coarity:} For each function symbol $f \colon t_1 \to t_2$ in the $1$-signature $\sigma$, the domain term $t_1$ is a \textbf{function arity term} and the codomain term $t_2$ is a \textbf{function coarity term}.
       \item \textbf{Equational Arity and Coarity:} For each equation $(p, q \colon t_1' \to t_2')$ in the theory $T$, the domain term $t_1'$ is an \textbf{equational arity term} and the codomain term $t_2'$ is an \textbf{equational coarity term}.
       \item \textbf{Arity Terms:} The collection of all function arity terms and equational arity terms is the set of \textbf{arity terms}.
       \item \textbf{Coarity Terms:} The collection of all function coarity terms and equational coarity terms is the set of \textbf{coarity terms}.
   \end{enumerate}
   A bialgebraic $\sigma$-theory $T$ is called \textbf{algebraic} if all coarity terms consist solely of sorts $s \in S$. Similarly, the theory $T$ is called \textbf{coalgebraic} if all arity terms are sorts. 
\end{definition}

\subsection{Semantics: Models and the Object of Bialgebras}

We now define the semantics of these signatures by interpreting them as models, operations, and equations within an arbitrary PIE-complete $2$-category.

\begin{definition}[$\Sigma$-Models]
   Let $\Sigma = (C,\mathcal{F}, \mathcal{T})$ be a $2$-signature. A \textbf{$\Sigma$-model} $M$ in a $2$-category $\K$ with finite products assigns:
   \begin{enumerate}
       \item An object $M_c$ in $\K$ to each category symbol $c \in C$. For any typed sequence $\bar{x} = (x_1 \colon c_1, \dots, x_n \colon c_n)$; $c_1,\ldots, c_n\in C$; we define the product object $M_{\bar{x}} = \prod_{i=1}^n M_{c_i}$ in $\K$.
       \item A $1$-cell $M(B) \colon M_{\bar{c}} \to M_d$ in $\K$ for each functor symbol $B \colon \bar{c} \to d \in \mathcal{F}$. For any $\Sigma$-term $t \colon c$ in a context $\bar{x}$, we recursively construct the term $1$-cell $M_{\bar{x}}(t) \colon M_{\bar{x}} \to M_c$.\footnote{Consider $t\in \Term^\Sigma_V$ in context $\bar{x}$. If $t = x_i$, then $M_{\bar{x}}(t) = \pi_{x_i}\colon M_{\bar{x}}\to M_{x_i}$ and if $t = B(t_1,\ldots, t_n)$, then $M_{\bar{x}}(t) = M(B)(M_{\bar{x}}(t_1),\ldots, M_{\bar{x}}(t_n))$.}
       \item A $2$-cell $M(\alpha) \colon M_{\bar{x}}(t_1) \Rightarrow M_{\bar{x}}(t_2)$ in $\K$ for each transformation symbol $\alpha^{\bar{x}} \colon t_1 \to t_2 \in \mathcal{T}$.
   \end{enumerate}
\end{definition}

Between different $\Sigma$-models, we can define three natural notions of morphism.

\begin{definition}[Lax, Oplax, and Strong Morphisms]\label{def:laxStrongMorphisms}
   Let $\Sigma$ be a $2$-signature with $\Sigma$-models $M$ and $N$ in a $2$-category $\K$ with products. A \textbf{lax morphism} $(F,\eta) \colon M \to N$ consists of the following data:
   \begin{enumerate}
       \item A $1$-cell $F_c \colon M_c \to N_c$ for each category symbol $c$ in $\Sigma$. We define $F_{\bar{x}} = \prod_{i\leq n} F_{c_i}\colon M_{\bar{x}}\to N_{\bar{x}}$ for every typed sequence $\bar{x} = (x_1\colon c_1,\ldots, x_n\colon c_n)$, where $c_1,\ldots, c_n$ are category symbols of $\Sigma$. 
       \item A $2$-cell $\eta^B \colon N(B) \circ F_{\bar{c}} \Rightarrow F_d \circ M(B)$ for every functor symbol $B \colon \bar{c} \to d$ in $\Sigma$. We recursively extend this to define a $2$-cell $\eta_{t, \bar{x}} \colon N_{\bar{x}}(t) \circ F_{\bar{x}} \Rightarrow F_c \circ M_{\bar{x}} (t)$ for every $\Sigma$-term $t \colon c$ in a context $\bar{x} = x_1\cdots x_n$ as follows:
   $$
\eta_{t,\bar{x}} = \begin{cases}
   \id\colon \pi_i\circ F_{\bar{x}}\Rightarrow F_{c_i}\circ \pi_i, & \text{if $t = x_i$ for $i\leq n$} \\
   N(B)(N_{\bar{x}}(t_1)F_{\bar{x}},\ldots, N_{\bar{x}}(t_n)F_{\bar{x}}) \\
\quad \xRightarrow{N(B)*(\eta_{t_1,\bar{x}},\dots, \eta_{t_n, \bar{x}})} N(B)F_{\bar{c}}(M_{\bar{x}}(t_1),\dots, M_{\bar{x}}(t_n))\\
   \quad \xRightarrow{\eta^B(M_{\bar{x}}(t_1),\dots, M_{\bar{x}}(t_n))} F_c M(B)(M_{\bar{x}}(t_1),\dots, M_{\bar{x}}(t_n)) & \text{if $t = B(t_1,\dots, t_n)$}
\end{cases}
       $$
       \item For every transformation symbol $\theta^{\bar{x}} \colon t_1 \to t_2$ of type $c$ in $\Sigma$, we require that the diagram
       $$
       \begin{tikzcd}[ampersand replacement=\&]
       N_{\bar{x}}(t_1) \circ F_{\bar{x}} \arrow[r, "\eta_{t_1, \bar{x}}", Rightarrow] \arrow[d, "N(\theta)*F_{\bar{x}}"', Rightarrow] \& F_c \circ M_{\bar{x}}(t_1) \arrow[d, "F_c*M(\theta)", Rightarrow] \\
       N_{\bar{x}}(t_2) \circ F_{\bar{x}} \arrow[r, "\eta_{t_2, \bar{x}}"' , Rightarrow]                                               \& F_c \circ M_{\bar{x}}(t_2)
       \end{tikzcd}
       $$
       commutes.
   \end{enumerate}
   Dually, we define an \textbf{oplax morphism} by reversing the direction of the $2$-cells $\eta^B$. We say that $(F,\eta)$ is a \textbf{strong morphism} if it is a lax morphism where each $2$-cell $\eta_B$ is an isomorphism.
\end{definition}

\begin{proposition}
   Let $(F,\eta) \colon M \to N$ and $(G,\theta) \colon N \to P$ be lax morphisms of $\Sigma$-models. Then the composite $G \circ F$ is a lax morphism. Furthermore, oplax and strong morphisms are closed under composition.
\end{proposition}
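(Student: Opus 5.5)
We first specify the composite data. Set $(GF)_c \coloneqq G_c F_c$ for each category symbol $c$. For each functor symbol $B\colon \bar c\to d$, take the pasted $2$-cell
$$
(\theta\eta)^B\colon P(B)\,G_{\bar c}F_{\bar c}\xRightarrow{\theta^B * F_{\bar c}} G_d\,N(B)\,F_{\bar c}\xRightarrow{G_d*\eta^B} G_dF_d\,M(B).
$$
Here we use $(GF)_{\bar c}=G_{\bar c}F_{\bar c}$. This identity holds by the uniqueness clause in the universal property of strict products, since both sides have components $G_{c_i}F_{c_i}\pi_i$.

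The key step is a lemma on the recursively extended cells. For every $\Sigma$-term $t\colon c$ in context $\bar x$, we will show
$$
(\theta\eta)_{t,\bar x} = (G_c*\eta_{t,\bar x})\bullet(\theta_{t,\bar x}*F_{\bar x}).
$$
The proof is by induction on $t$.

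If $t=x_i$, both sides are identities. This again uses $\pi_i G_{\bar x}F_{\bar x}=G_{c_i}F_{c_i}\pi_i$.

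If $t=B(t_1,\dots,t_n)$, we unfold the recursive definition of $(\theta\eta)_{t,\bar x}$ and insert the induction hypothesis for each $t_j$. The product universal property on $2$-cells lets us split the tuple $\bigl((G\eta_{t_j})\bullet(\theta_{t_j}F)\bigr)_j$ as the vertical composite of the tuples $(G\eta_{t_j})_j$ and $(\theta_{t_j}F)_j$. It remains to move $\theta^B$ past $N(B)*(\eta_{t_j,\bar x})_j$. This is an instance of the interchange law:
$$
(\theta^B * X')\bullet(P(B)G*\gamma)=(G N(B)*\gamma)\bullet(\theta^B*X),
$$
applied with $\gamma=(\eta_{t_j,\bar x})_j$. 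After this reordering, the two sides of the desired identity become the same composite.

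This inductive bookkeeping is the main obstacle. The difficulty is not conceptual; it lies in keeping track of the whiskerings together with the identifications $N(B)(N_{\bar x}(t_1)F_{\bar x},\dots)=N_{\bar x}(t)F_{\bar x}$ coming from strict products. We will state explicitly that tupling commutes with whiskering, $(\alpha_j)_j * h = (\alpha_j*h)_j$, which follows from the uniqueness part of the product universal property.

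With the lemma in hand, the coherence square for a transformation symbol $\tau^{\bar x}\colon t_1\to t_2$ follows by stacking two squares:

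1. First, the square for $(G,\theta)$, whiskered on the right by $F_{\bar x}$. This uses $(P(\tau)*G_{\bar x})*F_{\bar x}=P(\tau)*(GF)_{\bar x}$.
2. Second, the square for $(F,\eta)$, whiskered on the left by $G_c$.

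Pasting these two squares gives exactly the required square for $GF$, since by the lemma its horizontal edges are the composites of the edges of the two stacked squares.

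The oplax case is the same argument with all $2$-cells reversed, or equivalently the lax case applied in $\K^{co}$.

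For the strong case, $(\theta\eta)^B$ is a vertical composite of whiskerings of isomorphisms, hence an isomorphism.

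Finally, we will remark that identities $(\id,\id)$ are strong morphisms and that this composition is strictly associative. This is not needed for the statement.
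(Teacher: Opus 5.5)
Your proposal is correct and follows the paper's proof: the same composite cell $(G_d*\eta^B)\bullet(\theta^B*F_{\bar c})$, the same inductive factorization $(\theta\eta)_{t,\bar x}=(G_c*\eta_{t,\bar x})\bullet(\theta_{t,\bar x}*F_{\bar x})$, and the same pasting of the two whiskered coherence squares. You go beyond the paper's sketch by writing out the interchange step in the induction and by treating the oplax and strong cases explicitly; for the oplax case, note that passing to $\K^{\mathrm{co}}$ also reverses the transformation symbols of $\Sigma$, which is harmless since the lax case holds for every signature.
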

\begin{proof}
   We sketch the case for lax morphisms. We define the components $(G \circ F)_c = G_c \circ F_c$ and define the structural $2$-cell $\phi^B$ for each functor symbol $B \colon \bar{c} \to d$ as the composite:
   $$
   \phi^B \colon P(B) \circ (GF)_{\bar{c}} \xRightarrow{\theta^B*F_{\bar{c}}} G_d \circ N(B) \circ F_{\bar{c}} \xRightarrow{G_d * \eta^B} (GF)_d \circ M(B)
   $$
   By induction, for any $\Sigma$-term $t \colon c$ in a context $\bar{x}$, we obtain the factorization:
   $$
   \phi_{t, \bar{x}} \colon P_{\bar{x}}(t) \circ (GF)_{\bar{x}} \xRightarrow{\theta_{t, \bar{x}}*F_{\bar{x}}} G_c \circ N_{\bar{x}}(t) \circ F_{\bar{x}} \xRightarrow{G_c*\eta_{t, \bar{x}}} (GF)_c \circ M_{\bar{x}}(t)
   $$
   Consider a transformation symbol $\alpha^{\bar{x}} \colon t_1 \to t_2\colon c$ in $\Sigma$. Now the diagram
   $$
   \begin{tikzcd}[column sep = 2cm, ampersand replacement=\&]
   P_{\bar{x}}(t_1) \circ (GF)_{\bar{x}} \arrow[d, "P(\alpha)*(GF)_{\bar{x}}" description, Rightarrow] \arrow[rr, "\phi_{t_1, \bar{x}}", Rightarrow, bend left] \arrow[r, "\theta_{t_1, \bar{x}}*F_{\bar{x}}" description, Rightarrow] \& G_c \circ N_{\bar{x}}(t_1) \circ F_{\bar{x}} \arrow[r, "G_c*\eta_{t_1, \bar{x}}" description, Rightarrow] \arrow[d, "G_c*N(\alpha)*F_{\bar{x}}" description, Rightarrow] \& (GF)_c \circ M_{\bar{x}}(t_1) \arrow[d, "(GF)_c*M(\alpha)" description, Rightarrow] \\
   P_{\bar{x}}(t_2) \circ (GF)_{\bar{x}} \arrow[r, "\theta_{t_2, \bar{x}}*F_{\bar{x}}"' description, Rightarrow] \arrow[rr, "\phi_{t_2, \bar{x}}"', Rightarrow, bend right]                                        \& G_c \circ N_{\bar{x}}(t_2) \circ F_{\bar{x}} \arrow[r, "G_c*\eta_{t_2, \bar{x}}"' description, Rightarrow]                                                                \& (GF)_c \circ M_{\bar{x}}(t_2)
   \end{tikzcd}
   $$
   commutes. Thus, the composite $G \circ F$ is a lax morphism of $\Sigma$-models.
\end{proof}

We obtain the $2$-category of $\Sigma$-models with lax morphisms, and its sub-$2$-category of strong morphisms, by defining the $2$-cells. A transformation $\gamma \colon (F,\eta_F) \Rightarrow (G, \eta_G) \colon M \to N$ of lax $\Sigma$-morphisms consists of $2$-cells $\gamma_c \colon F_c \Rightarrow G_c$ in $\K$ such that the diagram:
$$
\begin{tikzcd}[ampersand replacement=\&]
N(B) \circ F_{\bar{c}} \arrow[d, "N(B)*\gamma_{\bar{c}}" description, Rightarrow] \arrow[r, "\eta^B_F", Rightarrow] \& F_d \circ M(B) \arrow[d, "\gamma_d*M(B)", Rightarrow] \\
N(B) \circ G_{\bar{c}} \arrow[r, "\eta_G^B"', Rightarrow]                                                           \& G_d \circ M(B)
\end{tikzcd}
$$
commutes for each functor symbol $B\colon \bar{c}\to d$ in $\Sigma$. Similarly, one attains the $2$-category of $\Sigma$-models with oplax morphisms and previously defined $2$-cells.

\begin{definition}[The Object of Operations $M^\sigma$]
   Consider a signature pair $(\Sigma,\sigma)$ with $S$ as the set of $\sigma$-sorts. Consider a $\Sigma$-model $M$ in a $2$-category $\K$ with small products and inserters. With products, we define $M_0 = \prod_{s \in S} M_{c_s}$. We define the object $M^\sigma$ of operations as the inserter
   $$
   \begin{tikzcd}
M^\sigma \arrow[d, "u_M"'] \arrow[r, "u_M"]                                    & M_0 \arrow[d, "(M(t_2))_f"]            \\
M_0 \arrow[r, "(M(t_1))_f"'] \arrow[ru, "\phi_M" description, Rightarrow] & \prod_{f} M_c
\end{tikzcd}
   $$
   where the indexing is over $\sigma$-function symbols $f\colon t_1\to t_2\colon c$.\footnote{For a term $t\in\Term_S^\Sigma$, we define $M(t)\colon M_0\to M_c$ recursively: $M(t) = \pi_s\colon M_0\to M_{c_s}$ if $t = s$ is a sort and $M(t) = M(B)(M(t_1),\ldots, M(t_n))$, if $t = B(t_1,\ldots, t_n)$.} For a term $t\colon c\in \Term_S^\Sigma$, we denote $M^\sigma(t) = M(t)\circ u_M\colon M^\sigma\to M_c$. If $\K = \CAT$, we call an object of $M^\sigma$ a \textbf{$\sigma$-model}.
\end{definition}
Note that a $\sigma$-model $m$ in $M$ consists of a choice of an object $m_s$ and a morphism $m(f)\colon M(t_1)((m_s)_s)\to M(t_2)((m_s)_s)$ for each $\sigma$-sort $s$ and a $\sigma$-function symbol $f\colon t_1\to t_2$. 

\begin{definition}[Path Evaluation and the Object of Bialgebras $M^T$]
   Let $(\Sigma,\sigma)$ be a signature pair with a $\Sigma$-model $M$ in a PIE-complete $2$-category $\K$. We define a $2$-cell $\overline{p} \colon M^\sigma(t_1) \Rightarrow M^\sigma(t_2) \colon M^\sigma \to M_c$ recursively for each arrow $p \colon t_1 \to t_2 \colon c$ in the term graph $G_\sigma$:
   {\footnotesize
   $$
   \overline{p}_M = \begin{cases}
       \pi_f * \phi \colon M^\sigma(t_1) \Rightarrow M^\sigma(t_2), & \text{if $p = f \colon t_1 \to t_2$ is a function symbol in $\sigma$,} \\
       M(\alpha) * (M^\sigma(t_1'), \dots, M^{\sigma}(t_n')), & \text{if $p = \alpha_{t_1', \dots, t_n'}, \alpha^{\bar{x}}\colon t_1\to t_2$ a $\Sigma$-transformation symbol,} \\
       M(B) * (M^\sigma(t_1'), \dots, \bar{q}_M, \dots, M^\sigma(t_n')), & \text{if $p = B(t_1', \dots, q, \dots, t_n')$, $B$ is a $\Sigma$-functor symbol.}
   \end{cases}
   $$
   }
   This extends via vertical composition to all paths $p$ in $G_\sigma$.
   
   For a bialgebraic $\sigma$-theory $T$, the \textbf{object of $T$-bialgebras} $M^T$ is defined via the equifier
   $$
       \begin{tikzcd}[ampersand replacement=\&]
{M^T} \& {M^\sigma} \&\& {\prod_{(p,q)} M_c}
\arrow["{e_M}", from=1-1, to=1-2]
\arrow[""{name=0, anchor=center, inner sep=0}, "{(M^\sigma(t_1))}", shift left=3, from=1-2, to=1-4]
\arrow[""{name=1, anchor=center, inner sep=0}, "{(M^\sigma(t_2))}"', shift right=3, from=1-2, to=1-4]
\arrow["{(\bar{p}_M)}"', shift right=3, Rightarrow, from=0, to=1]
\arrow["{(\bar{q}_M)}", shift left=3, Rightarrow, from=0, to=1]
\end{tikzcd}
   $$
   where the indexing is over $(p,q\colon t_1\to t_2\colon c)\in T$. If $\K = \CAT$, we call an object of $M^T$ a \textbf{$T$-bialgebra}.
\end{definition}
The first part of the following lemma justifies that the transformation $\overline{\alpha_{t_1',\ldots, t_n'}}_M$ in the previous definition has the correct domain and codomain. For a signature pair
$(\Sigma,\sigma = (S,F))$ and a lax morphism $(F,\eta)\colon M\to N$ of $\Sigma$-models in $\K$ with products, we define 
$$
F_0\coloneqq  \prod_{s\in S} F_{c_s}\colon M_0\to N_0,\quad \eta_t\colon N(t)F_0\Rightarrow F_cM(t)\colon M_0\to N_c
$$
for $t\colon c\in \Term_S^\Sigma$ almost the same as in Definition \ref{def:laxStrongMorphisms}(2); one just needs to substitute $0$ on $\bar{x}$.\footnote{Note that we have $1$-cells $M_{\bar{x}}(t)\colon M_{\bar{x}}\to M_c$ and $M(t')\colon M_0\to M_c$ for $t\colon c\in \Term_V^\Sigma$ in context $\bar{x}$ and $t'\in \Term_S^\Sigma$.}

\begin{lemma}\label{lem:substitution_evaluated}
   Let $(F,\eta) \colon M \to N$ be a lax morphism of $\Sigma$-models in $\K$. Let us denote by $V$ and $S$ the sets of variable symbols of $\Sigma$ and sorts of $S$, respectively. Let $t\in \Term_V^\Sigma$ be a term of type $c$ in a context $\bar{x} = x_1\cdots x_n$, where $x_i\colon c_i$ for category symbols $c,c_i,i\leq n$. Let $t_1\colon c_1,\ldots, t_n\colon c_n\in \Term_S^\Sigma$. Then we have the following factorizations
   \begin{align}
           M(t^{\bar{x}}(t_1,\ldots, t_n)) &\colon M_0\xrightarrow{(M(t_1),\ldots, M(t_n))}M_{\bar{x}}\xrightarrow{M_{\bar{x}}(t)} M_c\label{eq:0}\\
           \eta_{t^{\bar{x}}(t_1,\ldots, t_n)} &\colon N(t^{\bar{x}}(t_1,\ldots,t_n))F_0\xRightarrow{ N_{\bar{x}}(t)(\eta_{t_1},\ldots, \eta_{t_n})}N_{\bar{x}}(t)F_{\bar{x}}(M(t_1),\ldots, M(t_n))
           \nonumber\\
           &\quad \xRightarrow{\eta_{t,\bar{x}}(M(t_1),\ldots, M(t_n))}F_cM(t^{\bar{x}}(t_1,\ldots, t_n))\label{eq:1}
   \end{align}\qedhere
\end{lemma}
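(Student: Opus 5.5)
We argue by structural induction on the term $t\in\Term_V^\Sigma$ in the context $\bar{x}$. Both factorizations are statements about how the recursive definitions of $M(-)$, $M_{\bar{x}}(-)$, $\eta_{(-)}$ and $\eta_{(-),\bar{x}}$ interact with substitution. Throughout, write $\bar{m}\coloneqq(M(t_1),\ldots,M(t_n))\colon M_0\to M_{\bar{x}}$ and $\bar{\eta}\coloneqq(\eta_{t_1},\ldots,\eta_{t_n})$. Since $F_{\bar{x}}=\prod_i F_{c_i}$, the universal property of products gives
\[
F_{\bar{x}}\bar{m}=(F_{c_i}M(t_i))_i \quad\text{and}\quad (N(t_i))_iF_0=\bar{n}F_0,
\]
where $\bar{n}\coloneqq(N(t_1),\ldots,N(t_n))$. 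Hence $\bar{\eta}$ is a $2$-cell $\bar{n}F_0\Rightarrow F_{\bar{x}}\bar{m}$, and the composite in \eqref{eq:1} is well-typed once \eqref{eq:0} is known for both $M$ and $N$.

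For \eqref{eq:0}, first suppose $t=x_i$. Then $t^{\bar{x}}(t_1,\ldots,t_n)=t_i$ and $M_{\bar{x}}(x_i)\bar{m}=\pi_i\bar{m}=M(t_i)$. Constants are handled like the general case with $n=0$. Now suppose $t=B(s_1,\ldots,s_k)$. Substitution commutes with $B$, so
\[
M(t^{\bar{x}}(\bar{t}))=M(B)\bigl(M(s_1^{\bar{x}}(\bar{t})),\ldots,M(s_k^{\bar{x}}(\bar{t}))\bigr)=M(B)\bigl(M_{\bar{x}}(s_1)\bar{m},\ldots,M_{\bar{x}}(s_k)\bar{m}\bigr)
\]
by the induction hypothesis. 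This equals $M(B)(M_{\bar{x}}(s_1),\ldots,M_{\bar{x}}(s_k))\bar{m}=M_{\bar{x}}(t)\bar{m}$, because tupling into a product commutes with precomposition.

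For \eqref{eq:1}, we again induct. In the variable case, $\eta_{x_i,\bar{x}}$ is the identity, so the composite reduces to $\pi_i*\bar{\eta}=\eta_{t_i}$, as required. In the case $t=B(s_1,\ldots,s_k)$, the left-hand side unfolds by definition to
\[
\eta^B(M(s_j^{\bar{x}}(\bar{t})))_j\bullet N(B)*\bigl(\eta_{s_j^{\bar{x}}(\bar{t})}\bigr)_j.
\]
Applying the induction hypothesis, each $\eta_{s_j^{\bar{x}}(\bar{t})}$ becomes $\eta_{s_j,\bar{x}}\bar{m}\bullet N_{\bar{x}}(s_j)\bar{\eta}$. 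The $2$-cell tupling $(-)_j$ commutes with vertical composition, since products are $2$-dimensional. Whiskering by $N(B)$ preserves vertical composites, and the products' universal property on $2$-cells gives $(N_{\bar{x}}(s_j)*\bar{\eta})_j=(N_{\bar{x}}(s_j))_j*\bar{\eta}$. Together these regroup the expression as
\[
\Bigl[\eta^B(M_{\bar{x}}(s_j)\bar{m})_j\bullet N(B)*(\eta_{s_j,\bar{x}})_j\,\bar{m}\Bigr]\bullet N_{\bar{x}}(t)*\bar{\eta},
\]
where \eqref{eq:0} has been used to rewrite the indices. The bracketed term is precisely $\eta_{t,\bar{x}}\bar{m}$ by the definition of $\eta_{t,\bar{x}}$, which completes the induction.

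The main obstacle is the bookkeeping in the inductive step of \eqref{eq:1}: one must check that each whiskering and tupling identity holds \emph{strictly}, including interchange of horizontal composition with tuples and whiskering of a tuple by $\bar{m}$. Each of these follows from the $2$-cell part of the product universal property, applied by verifying equality after composing with every projection. Since we work in a strict $2$-category, no coherence isomorphisms appear.
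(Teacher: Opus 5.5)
Your proof is correct and follows essentially the same route as the paper. Both argue by structural induction on $t$: the variable case reduces to projecting the tuples $(M(t_1),\ldots,M(t_n))$ and $(\eta_{t_1},\ldots,\eta_{t_n})$, and the case $t=B(s_1,\ldots,s_k)$ applies the induction hypothesis, distributes vertical composition and whiskering over tuples, and regroups into $\eta_{t,\bar{x}}\bar{m}\bullet N_{\bar{x}}(t)\bar{\eta}$. One small slip: in your notation constants correspond to $k=0$, not $n=0$.
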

\begin{proof}
We prove the claim by induction on the structure of the term $t$.

\noindent\textbf{Case 1:} Assume $t = x_i$. Now $t^{\bar{x}}(t_1,\ldots, t_n) = t_i$. By definition, we have that $x_i^{\bar{x}}(t_1, \dots, t_n) = t_i$ and now $M(t_i) = \pi_i(M(t_1),\ldots, M(t_n))$ and $\eta_{t_i} = id\bullet \pi_i(\eta_{t_1},\ldots, \eta_{t_n})$. Thus the equations hold.

\noindent\textbf{Case 2:} Assume $t = B(t_1',\ldots, t_k')$, where the equations hold for $t_i'$ for $i\leq k$ and $B$ is a functor symbol in $\Sigma$. 
The substitution on $t$ then yields:
\[
t^{\bar{x}}(t_1, \dots, t_n) = B(t_1'^{\bar{x}}(t_1,\ldots, t_n),\ldots, t_k'^{\bar{x}}(t_1,\ldots, t_n))= B(s_1, \dots, s_k),
\]
where $s_i = t_i'^{\bar{x}}(t_1,\ldots, t_n)$ for $i\leq k$. By the induction hypothesis $M(s_i) = M_{\bar{x}}(t_i')\bar{M}$, where $\bar{M}\coloneqq (M(t_1),\ldots, M(t_n))$ for $i\leq k$.
To verify Equation \eqref{eq:0}, consider:
\begin{align*}
   &M(t^{\bar{x}}(t_1, \dots, t_n)) \\
   &= M(B(s_1, \dots, s_k)) \\
   &= M(B)(M(s_1), \dots, M(s_k)) \tag{Definition of $M(B(s_1,\dots, s_k))$} \\
   &= M(B) (M_{\bar{x}}(t'_1)\bar{M}, \dots, M_{\bar{x}}(t'_k)\bar{M}) \tag{Induction hypothesis \eqref{eq:0} on $t'_i$} \\
   &= M(B) (M_{\bar{x}}(t'_1), \dots, M_{\bar{x}}(t'_k)) \bar{M} \\
   &= M_{\bar{x}}(B(t'_1, \dots, t'_k))\bar{M} \\
   &= M_{\bar{x}}(t)(M(t_1), \dots, M(t_n))
\end{align*}
This proves equation \eqref{eq:0} for the inductive step. Denote $\bar{\eta}\coloneqq (\eta_{t_1},\ldots, \eta_{t_n})$. For the equation \eqref{eq:1}, consider:
{
\begin{align*}
   &\eta_{t^{\bar{x}}(t_1,\ldots, t_n)}\\
   &= \eta_{B(s_1,\ldots, s_k)}\\
   &= \eta^B (M(s_1),\ldots, M(s_k)) \bullet N(B)(\eta_{s_1},\ldots, \eta_{s_k}) \tag{Definition of $\eta_{B(s_1,\ldots, s_k)}$}\\
   &= \eta^B(M_{\bar{x}}(t_1')\bar{M},\ldots, M_{\bar{x}}(t_k')\bar{M})\bullet\\
   & N(B)(\eta_{t_1',\bar{x}}\bar{M} \bullet N_{\bar{x}}(t_1')\bar{\eta},\ldots, \eta_{t_k',\bar{x}}\bar{M}\bullet N_{\bar{x}}(t_k')\bar{\eta}) \tag{Induction hypothesis on $t_i'$}\\
   &=\eta^B(M_{\bar{x}}(t_1')\bar{M},\ldots, M_{\bar{x}}(t_k')\bar{M})\bullet
   N(B)(\eta_{t_1',\bar{x}}\bar{M},\ldots, \eta_{t_k',\bar{x}}\bar{M})\bullet\\
   & N(B)(N_{\bar{x}}(t_1')\bar{\eta},\ldots, N_{\bar{x}}(t_k')\bar{\eta}) \tag{Composition componentwise}\\
   &= (\eta^B(M_{\bar{x}}(t_1'),\ldots, M_{\bar{x}}(t_k'))\bullet
   N(B)(\eta_{t_1',\bar{x}},\ldots, \eta_{t_k',\bar{x}}))\bar{M} \bullet
    N(B)(N_{\bar{x}}(t_1'),\ldots, N_{\bar{x}}(t_k'))\bar{\eta} \\
   &= \eta_{B(t_1',\ldots, t_k'),\bar{x}}\bar{M}\bullet N_{\bar{x}}(B(t_1',\ldots, t_k'))\bar{\eta}\tag{Definitions of $\eta_{B(t_1',\ldots, t_k')}$ and $N_{\bar{x}}(B(t_1',\ldots, t_k'))$}\\
   &=\eta_{t,\bar{x}}(M(t_1),\ldots, M(t_n))\bullet N_{\bar{x}}(t)(\eta_{t_1},\ldots, \eta_{t_n})\tag{$t = B(t_1',\ldots, t_k')$}
\end{align*}
}
\end{proof}

\subsection{Examples}

\begin{examples}[Examples of $2$-Signatures]\label{ex:$2$-signatures}
   We define the following standard $2$-signatures representing background categorical structures. 
   {\small
   \begin{align*}
       \Sigma_{\text{Cart}}    
    &= (c,\quad \square\colon cc\to c,\ I\colon ()\to c,\ S\colon c\to c,
    \quad x\xrightarrow{\delta^x}x\square x, \ x\xleftarrow{p_1^{xy}}x\square y\xrightarrow{p_2^{xy}}y,\ x\xrightarrow{!^x}I)\\
       \Sigma_{\text{Mon}}     
       &= (c,\quad \square\colon cc\to c,\  I\colon ()\to c,\quad (x\square y)\square z\rightleftarrows x\square (y\square z),\ x\square I\rightleftarrows x\leftrightarrows I\square x)\\
       \Sigma_{\text{SMon}}
       &= (c,\quad \square\colon cc\to c,\ I\colon ()\to c,\quad (x\square y)\square z\rightleftarrows x\square (y\square z),\  x\square I\rightleftarrows x\leftrightarrows I\square x,\ x\square y\to y\square x)\\
       \Sigma_{\text{Adj}}
       &= (c,d,\quad L\colon c\to d,\ R\colon d\to c,\quad \eta^x\colon x\to R(L(x)),\ \varepsilon_y\colon L(R(y))\to y)\\
       \Sigma_{\text{Mnd}}
       &= (c,\quad T\colon c\to c,\quad \mu^x\colon T(T(x))\to T(x),\ \eta^x\colon x\to T(x))\\
       \Sigma_{\text{Distr}}
       &= (c,\quad T,S\colon c\to c,\quad  \mu_T^x,\ \eta_T^x,\ \mu_S^x,\ \eta_S^x,\ \gamma^x\colon S(T(x))\to T(S(x))) 
   \end{align*}
   }
\end{examples}

\begin{examples}\label{exs:theories}
   We provide examples for the theories of monoids, Hopf algebras and fields:
   \begin{enumerate}
       \item \textbf{Monoids:} Consider the signature pair $(\Sigma, \sigma)$ defined as follows:
       \begin{align*}
       \Sigma   
       &= (c,\quad \square\colon cc\to c, I\colon ()\to c,\quad (x\square y)\square z\xrightarrow{\alpha^{xyz}} x\square (y\square z), x\square I\xrightarrow{\rho^x} x\xleftarrow{\lambda^x} I\square x)\\
       \sigma
       &= (s\colon c,\quad \mu\colon s\square s\to s, u\colon I\to s)
   \end{align*}
   For convenience, we write $x\square y$ for $\square(x,y)$. Consider the following parallel paths in the term graph $G_{\sigma}$ defining the theory of monoids $T$:
   $$
   \begin{tikzcd}[column sep=0.8 cm]
   (s \square s) \square s \arrow[d, "\mu \square s"'] \arrow[r, "\alpha_{s,s,s}"] & s \square (s \square s) \arrow[r, "s \square \mu"] & s \square s \arrow[d, "\mu"] & s \square I \arrow[r, "s \square u"] \arrow[rd, "\rho_s"'] & s \square s \arrow[d, "\mu"] & s \square s \arrow[d, "\mu"'] & I \square s \arrow[l, "u \square s"'] \arrow[ld, "\lambda_s"] \\
   s \square s \arrow[rr, "\mu"'] & & s & & s & s & 
   \end{tikzcd}
   $$
   Let $M$ be a $\Sigma$-model in $\CAT$, a monoidal category, for instance. We denote $\otimes\coloneqq M(\square)$ and denote $I$ for $M(I)(*)$, where $*$ is the unique element of the terminal category. Note that 
   $$
   M^\sigma = \Ins((M(s\square s),M(I)), (M(s),M(s))),
   $$
   where $M(s\square s)$, $M(s)$ and $M(I)$ are the functors 
   $$
   x\mapsto x\otimes x, \quad x, \quad I\colon M_c\to M_c\text{, respectively.} 
   $$
   Thus $M^\sigma$ consists of objects $(x,m\colon x\otimes x\to x, e\colon I\to x)$, and $M^\sigma$ can be understood as the category of pointed magmas of $M$. Consider the forgetful functor $U_M\colon M^\sigma\to M_c$. To extract the subcategory of $M^\sigma$ of internal monoids, we need to consider a suitable full subcategory $M^T$.

   By definition, the category $M^T$ is the equifier of the parallel pair of natural transformations:
   \begin{align*}
   &\left(\overline{\mu \circ (\mu \square s)}_M, \,\, \overline{\rho_s}_M, \,\, \overline{\lambda_s}_M\right), \,\, \left(\overline{\mu \circ (s \square \mu) \circ \alpha_{s,s,s}}_M, \,\, \overline{\mu \circ (s \square u)}_M, \,\, \overline{\mu \circ (u \square s)}_M\right)\colon \\
   &\left(M((s \square s)\square s)U_M, \,\, M(s \square I)U_M, \,\, M(I \square s)U_M\right) \Rightarrow (U_M, \,\, U_M, \,\, U_M)\colon M^\sigma\to M_c^3
   \end{align*}
   This means an object $(x, m, e)$ of $M^\sigma$ is in $M^T$ if and only if 
   {\small
   $$
   \left(\overline{\mu \circ (\mu \square s)}_M, \,\, \overline{\rho_s}_M, \,\, \overline{\lambda_s}_M\right)_{(x,m,e)} = \left(\overline{\mu \circ (s \square \mu) \circ \alpha_{s,s,s}}_M, \,\, \overline{\mu \circ (s \square u)}_M, \,\, \overline{\mu \circ (u \square s)}_M\right)_{(x,m,e)},
   $$
   which is exactly to say that the following three equations hold in $M_c$:
   \begin{align*}
       m \circ (m \otimes x) &= m \circ (x \otimes m) \circ M(\alpha)_{x,x,x} \\
       m \circ (x \otimes e) &= M(\rho)_x \\
       m \circ (e \otimes x) &= M(\lambda)_x
   \end{align*}
   }
   These are precisely the classical associativity and unit laws for a monoid in $M$.

\item \textbf{Hopf Algebras:}
    To define internal Hopf algebras, we work relative to a skew-monoidal $2$-signature with a chosen interchange law:
    {\small
    \begin{align*}
        \Sigma&=(c, \quad \square \colon cc \to c,\ I \colon () \to c, \\
        &\quad (x \square y) \square z\xrightarrow{\alpha^{xyz}}  x \square (y \square z),\ I \square x \xrightarrow{\lambda^x} x \xrightarrow{\rho^x} x \square I,\ (x \square y) \square (z \square v) \xrightarrow{\chi^{xyzv}} (x \square z) \square (y \square v))
    \end{align*}
    }
    where $\chi^{xyzv}$ represents the interchange law, which is often uniquely constructed via the coherence of symmetric monoidal categories \cite{MacLane1998}.

    We define the relative $1$-signature:
    $$
    \sigma_{\text{Hopf}} \coloneqq (s\colon c, \quad \mu\colon s \square s \to s, \ u\colon I \to s, \ \delta\colon s \to s \square s, \ \varepsilon\colon s \to I, \ S\colon s \to s)
    $$
    The theory $T_{\text{Hopf}}$ consists of the following equations in the term graph $G_{\sigma_{\text{Hopf}}}$, presented as commutative diagrams where each face represents a parallel pair of paths:\footnote{We use $p_1\square p_2$ for the path $(p_1\square t_2')\circ (t_1\square p_2)$, where $p_i\colon t_i\to t_i'$ is an arrow in the term graph $G_{\sigma_{\mathrm{Hopf}}}$ for $i = 1,2$.}
    {\small $$
\begin{tikzcd}[column sep = 0.7 cm]
(s\square s)\square s \arrow[d, "s\square \mu"'] \arrow[r, "{\alpha_{s,s,s}}"] & s\square (s\square s) \arrow[r, "s\square \mu"]              & s\square s \arrow[d, "\mu"]                                         & I\square s \arrow[rd, "\lambda_s"'] \arrow[r, "u\square s"]              & s\square s \arrow[d, "\mu" description]                                         & s\square I \arrow[l, "s\square u"']    &                        \\
s\square s \arrow[rr, "\mu"']                                                  &                                                              & s                                                                   &                                                                          & s                                                                               & s \arrow[l, "="'] \arrow[u, "\rho_s"'] &                        \\
s \arrow[d, "\delta"'] \arrow[r, "\delta"]                                     & s\square s \arrow[r, "\delta\square s"]                      & (s\square s)\square s \arrow[d, "{\alpha_{s,s,s}}"]                 & I\square s \arrow[d, "\lambda_s"']                                       & s\square s \arrow[l, "\varepsilon\square s"'] \arrow[r, "s\square \varepsilon"] & s\square I                             &                        \\
s\square s \arrow[rr, "s\square \delta"']                                      &                                                              & s\square(s\square s)                                                & s                                                                        & s \arrow[l, "="'] \arrow[u, "\delta" description] \arrow[ru, "\rho_s"']         &                                        &                        \\
(s\square s)\square (s\square s) \arrow[r, "{\chi_{s,s,s,s}}"]                 & (s\square s)\square (s\square s) \arrow[r, "\mu\square \mu"] & s\square s                                                          & I\square I \arrow[r, "\lambda_I"]                                        & I                                                                               & I\square I \arrow[r, "u\square u"]     & s\square s             \\
s\square s \arrow[u, "\delta\square \delta"] \arrow[rr, "\mu"']                &                                                              & s \arrow[u, "\delta"']                                              & s\square s \arrow[r, "\mu"'] \arrow[u, "\varepsilon\square \varepsilon"] & s \arrow[u, "\varepsilon"']                                                     & I \arrow[r, "u"'] \arrow[u, "\rho_I"]  & s \arrow[u, "\delta"'] \\
                                                                               & I                                                            & s\square s \arrow[rr, "s\square S"]                                 &                                                                          & s\square s \arrow[d, "\mu"]                                                     &                                        &                        \\
I \arrow[r, "u"'] \arrow[ru, "="]                                              & s \arrow[u, "\varepsilon"']                                  & s \arrow[r, "\varepsilon"] \arrow[u, "\delta"] \arrow[d, "\delta"'] & I \arrow[r, "u"]                                                         & s                                                                               &                                        &                        \\
                                                                               &                                                              & s\square s \arrow[rr, "S\square s"']                                &                                                                          & s\square s \arrow[u, "\mu"']                                                    &                                        &                       
\end{tikzcd}
$$
}

    Evaluating this theory in $\mathbf{CAT}$ over a symmetric monoidal category $M$ yields precisely the classical category $\mathbf{Hopf}_M$ of internal Hopf algebras.

\item \textbf{Fields:}
    The category of fields can be characterized starting from the category of commutative groups. Specifically, a field $k$ corresponds to a commutative group $k^*$ with a commutative ring structure on $k^*+1$ whose multiplication extends that of $k^*$.

    To formalize this, we consider a signature pair $(\Sigma, \sigma)$:
    \begin{align*}
    \Sigma
    &\coloneqq (c,\quad \square\colon cc\to c,\ I\colon ()\to c,\ S\colon c\to c,\\
    &\quad\quad x\xrightarrow{\delta^x}x\square x, \ x\xleftarrow{p_1^{xy}}x\square y\xrightarrow{p_2^{xy}}y,\ x\xrightarrow{!^x}I,\ x\xrightarrow{\eta^x}Sx)\\
    \sigma 
    &\coloneqq (s \colon c,\quad m\colon s\square s\to s,\ i\colon s\to s,\ u\colon I\to s,\\
    &\quad\quad m_1,m_2\colon Ss\square Ss\to Ss,\ \nu\colon Ss\to Ss,\iota\colon I\to Ss)
    \end{align*}
    To define the bialgebraic theory of fields, we introduce the theory $T$ containing precisely the following equations in the term graph $G_\sigma$:
    \begin{itemize}
        \item The equations stating that $(s, m, i, u)$ satisfies the equations of a commutative group.
        \item The equations stating that $(Ss, m_1, \nu, \iota, m_2, \eta_s \circ u)$ determines a commutative ring, where $(Ss, m_1, \nu, \iota)$ and $(Ss, m_2, \eta_s \circ u)$ satisfy the additive and the multiplicative equations, respectively.
        \item An equation stating that $\eta_s \colon s \to Ss$ determines a morphism of magmas from $(s,m)$ to $(Ss,m_2)$:
        $$
        m_2 \circ (\eta_s \square \eta_s) = \eta_s \circ m \colon s \square s \to Ss
        $$
    \end{itemize}
    We choose the categorical $\Sigma$-model $M = (\Set, \times, 1, (-)+1)$ where $M(\eta)$ is the unit of the maybe monad $(-)+1$. Now, $M^T$ is the category of fields.\footnote{One could take a bifunctor symbol $\boxplus$ with the codiagonal and injections instead of $S, \eta$ and $\iota$. This would be more advantageous in the sense that the coproduct can be defined as the left adjoint to the diagonal $1$-cell. This allows an essentially unique formulation for fields if we were to develop the notion of $2$-equational $\Sigma$-theories.}

\item \textbf{Generalizing Lax Limits:}
    Consider an arbitrary $2$-signature $\Sigma = (C,\mathcal{F},\mathcal{T})$. We construct an associated algebraic theory $T$ for $\Sigma$ which generalizes the notion of a lax limit of a strict $2$-functor to the cartesian (multivariable) setting. Specifically, this construction unifies the theories of monoids, commutative monoids, and monad algebras.
    
    We define the $1$-signature $\sigma_{\text{Lax}}$ relative to $\Sigma$:
    $$
    \sigma_{\text{Lax}} \coloneqq \left(s_c\colon c, \quad f_B\colon B(s_{c_1},\dots, s_{c_n})\to s_d\right)_{c\in C,B\colon c_1\cdots c_n\to d\in\mathcal{F}}.
    $$
    To be more precise, the notation above refers to the fact that the set $S$ of sorts is in a bijective correspondence with $C$ and $F$ with $\mathcal{F}$. We recursively define for each term $t \colon c \in \Term_S^\Sigma$ a path $f_t \colon t \to s_c$ in the term graph $G_{\sigma_{\text{Lax}}}$:
    {\small
    $$
    f_t\coloneqq \begin{cases}
        ()_{s}\colon s\to s, & \text{if $t = s\in S$,}\\
        B(t_1, \dots, t_n) \xrightarrow{B(f_{t_1}, \dots, f_{t_n})} B(s_{c_1}, \dots, s_{c_n}) \xrightarrow{f_B} s_c, & \text{if $t = B(t_1,\dots, t_n)$, $t_i\colon c_i$ for $i\leq n$,}
    \end{cases}
    $$
    }
    where we utilize the convention that for any paths $p^i = (p^i_{m_i},\dots, p^i_1)\colon t_i'\to t_i''$ in $G_\sigma$ (for $i\leq n$) and a $\Sigma$-functor symbol $B$, the term $B(p^1, \dots, p^n)$ denotes the composition:
    \begin{align*}
        B(p^1,\dots, p^n) &\coloneqq B(p^1_{m_1},t_2'',\dots, t_n'')\circ\dots\circ B(p^1_1,t_2'',\dots, t_n'') \circ \dots \\
        &\quad \dots \circ B(t_1',\dots, t_{n-1}',p^n_{m_n})\circ\dots\circ B(t_1',\dots, t_{n-1}', p^n_1).
    \end{align*}
    The theory $T_{\text{Lax}}$ consists of the following parallel paths:
    \[
    \begin{tikzcd}
    t_1^{\bar{x}}(\bar{s}) \arrow[r, "\alpha_{\bar{s}}"] \arrow[rd, "f_{t_1^{\bar{x}}(\bar{s})}"'] & t_2^{\bar{x}}(\bar{s}) \arrow[d, "f_{t_2^{\bar{x}}(\bar{s})}"] \\
                                                                                                   & s_c                                                           
    \end{tikzcd}
    \]
    for each $\alpha^{\bar{x}}\colon t_1\to t_2\in \mathcal{T}$, where $\bar{s} = s_{c_1}\cdots s_{c_n}$ and $x_i\colon c_i$ for $i\leq n$. 
    
    If $\Sigma$ is the signature for a monad, a monoidal category or a symmetric monoidal category, then the associated algebraic theory is of the monad algebras, monoids and commutative monoids, respectively.
       \end{enumerate}

\end{examples}
\subsection{Induced Functor of Algebras}

In this subsection, we fix a PIE-complete $2$-category $\K$, a signature pair $(\Sigma,\sigma)$ and a bialgebraic $\sigma$-theory $T$. Our mission is to find conditions so that a morphism $F\colon M\to N$ of $\Sigma$-models in $\K$ induces a morphism $F\colon M^T\to N^T$ between the objects of $T$-bialgebras.

\begin{lemma}\label{lem:morphism_between_inserters}
   Let $(F,\eta) \colon M \to N$ be a lax morphism of $\Sigma$-models in $\K$. Assume that $\eta_t\colon N(t)F_0\Rightarrow F_c M(t)\colon M_0\to N_c$ is an isomorphism for each function coarity term $t\colon c$. Then the following holds:
   \begin{enumerate}
       \item There exists a unique morphism $F^\sigma\colon M^\sigma\to N^\sigma$ satisfying the equations
       \begin{align}
           u_N \circ F^\sigma 
           &= F_0 \circ u_M \nonumber \\
           \phi_N * F^\sigma
           &= (\eta_{t_2}^{-1}*u_M)_f \bullet ((\prod_{f\colon t_1\to t_2\colon c} F_{c}) * \phi_M) \bullet (\eta_{t_1}*u_M)_f\label{eq:2}
   \end{align}
       \item For every path $p\colon t_1\to t_2$ in the term graph $G_\sigma$, the following diagram commutes:
       $$
\begin{tikzcd}
N(t_1)F_0u_M \arrow[r, "\eta_{t_1}*u_M", Rightarrow] \arrow[d, "\bar{p}_N*F^\sigma"', Rightarrow] & F_cM(t_1)u_M \arrow[d, "F_c*\bar{p}_M", Rightarrow] \\
N(t_2)F_0u_M \arrow[r, "\eta_{t_2}*u_M"', Rightarrow]                                             & F_cM(t_2)u_M                                       
\end{tikzcd}
$$
In other words, 
\begin{equation}\label{eq:3}
\eta_{t_2}u_M\bullet \bar{p}_NF^\sigma = F_c\bar{p}_M\bullet \eta_{t_1}u_M
\end{equation}

   \end{enumerate}
\end{lemma}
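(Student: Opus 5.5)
For part (1) I will apply the one-dimensional universal property (i) of the inserter $N^\sigma=\Ins((N(t_1))_f,(N(t_2))_f)$. Take $h'\coloneqq F_0u_M\colon M^\sigma\to N_0$. I need a $2$-cell $\alpha\colon (N(t_1))_fF_0u_M\Rightarrow (N(t_2))_fF_0u_M$, and I will define it componentwise by the right-hand side of \eqref{eq:2}. For each $f\colon t_1\to t_2\colon c$, its component is
$$
N(t_1)F_0u_M\xRightarrow{\eta_{t_1}u_M}F_cM(t_1)u_M\xRightarrow{F_c(\pi_f*\phi_M)}F_cM(t_2)u_M\xRightarrow{\eta_{t_2}^{-1}u_M}N(t_2)F_0u_M .
$$
This makes sense because $t_2$ is a function coarity term, so $\eta_{t_2}$ is invertible by hypothesis. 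The universal property then gives a unique $F^\sigma$ with $u_NF^\sigma=F_0u_M$ and $\phi_N*F^\sigma=\alpha$, which is exactly the statement. The unique factorization through products $(\alpha_f)_f$ identifies the tupled form in \eqref{eq:2} with these components.

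For part (2) I will first reduce to single arrows of $G_\sigma$. If \eqref{eq:3} holds for $p\colon t_1\to t_2$ and for $q\colon t_2\to t_3$, it holds for $qp$ by pasting the two squares, since $\overline{qp}=\bar q\bullet\bar p$ for both $M$ and $N$ and whiskering commutes with vertical composition. Identity paths are trivial. The single-arrow case goes by induction on the recursive definition of arrows, with three cases.

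\emph{Function symbol $f$.} Here $\bar f_N*F^\sigma=\pi_f*(\phi_N*F^\sigma)$, which by \eqref{eq:2} equals $\eta_{t_2}^{-1}u_M\bullet F_c\bar f_M\bullet \eta_{t_1}u_M$. Composing with $\eta_{t_2}u_M$ gives \eqref{eq:3} directly.

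\emph{Transformation symbol $p=\alpha_{t_1',\dots,t_n'}$ with $\alpha^{\bar x}\colon r_1\to r_2$.} Lemma~\ref{lem:substitution_evaluated} factors $\eta_{r_i^{\bar x}(\bar t')}$ as $\eta_{r_i,\bar x}(M(t'_j))_j\bullet N_{\bar x}(r_i)(\eta_{t'_j})_j$. I will paste three pieces. The first is the interchange square for $N(\alpha)$ against $(\eta_{t'_j}u_M)_j$, which commutes by the interchange law. The second is the lax-morphism axiom (Definition~\ref{def:laxStrongMorphisms}(3)) for $\alpha$, whiskered by $(M(t'_j)u_M)_j$. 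The third is the identity $u_NF^\sigma=F_0u_M$, which rewrites $N(t'_j)F_0u_M=N^\sigma(t'_j)F^\sigma$.

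\emph{Functor symbol $p=B(t'_0,\dots,q,\dots,t'_n)$.} Here $\eta_{B(\dots)}=\eta^B(M(t'_j))_j\bullet N(B)(\eta_{t'_j})_j$. I will apply $N(B)$ to the inductive square for $q$ in slot $i$, with identities elsewhere, and then use naturality (interchange) of $\eta^B$ with respect to the $2$-cell $\bar q_M$ in slot $i$.

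The main obstacle is the transformation-symbol case. There one must check carefully that the whiskered interchange squares align with the factorization from Lemma~\ref{lem:substitution_evaluated}, with all whiskerings by $u_M$ and $F^\sigma$ placed consistently. I will write it as a single pasting diagram with the three regions named above.
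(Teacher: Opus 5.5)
Your proposal is correct and follows essentially the same route as the paper. Part (1) uses the one-dimensional universal property of the inserter $N^\sigma$ applied to $F_0u_M$ with the pasted $2$-cell $(\eta_{t_2}^{-1}u_M)_f\bullet(\prod_f F_c*\phi_M)\bullet(\eta_{t_1}u_M)_f$, and part (2) uses structural induction on arrows of $G_\sigma$ followed by pasting along paths: function symbols by whiskering \eqref{eq:2} with $\pi_f$, transformation symbols via Lemma~\ref{lem:substitution_evaluated}, interchange and the lax-morphism axiom, and functor symbols via the inductive square in slot $i$ plus interchange with $\eta^B$.
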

\begin{proof}
   The first universal property of the inserter $N^\sigma$ induces the unique morphism $F^\sigma$:
   $$
   \begin{tikzcd}[row sep = 1 cm]
M^\sigma \arrow[d, "u_M"'] \arrow[r, "u_M"]                                                 & M_0 \arrow[r, "F_0"] \arrow[d, "(M(t_2))_f" description]                                                                              & N_0 \arrow[dd, "(N(t_2))_f"] \\
M_0 \arrow[d, "F_0"'] \arrow[r, "(M(t_1))_f"'] \arrow[ru, "\phi_M" description, Rightarrow] & \prod_{f\colon t_1\to t_2\colon c}M_c \arrow[rd, "\prod_f F_c" description] \arrow[ru, "(\eta_{t_2}^{-1})_f" description, Rightarrow] &                              \\
N_0 \arrow[rr, "(N(t_1))_f"'] \arrow[ru, "(\eta_{t_1})_f" description, Rightarrow]          &                                                                                                                                       & \prod_f N_c                 
\end{tikzcd}
   $$
   We prove by structural induction on arrows $p$ in $G_\sigma$ that the equation $\eqref{eq:3}$ holds:
       
       \noindent\textbf{Case 1:} Assume $p = f$ is a $\sigma$-function symbol. Whiskering the equation $\eqref{eq:2}$ with $\pi_f$ from the left yields the equation \eqref{eq:3}.

   \noindent\textbf{Case 2:} Assume $p = \alpha_{t_1',\ldots, t_n'}$, where $\alpha^{\bar{x}}\colon t_1''\to t_2''$ is a transformation symbol in $\Sigma$ and $t_i = t_i''^{\bar{x}}(t_1',\ldots, t_n')$ for $i = 1,2$.
   Now
     {\small \begin{align*}
       &\eta_{t_2}u_M\bullet \bar{p}_NF^\sigma\\
       &= \eta_{t_2}u_M\bullet N(\alpha)(N^\sigma(t_1'),\ldots, N^\sigma(t_n'))F^\sigma\tag{Definition of $\bar{p}_N $}\\
       &= \eta_{t_2}u_M\bullet N(\alpha)(N(t_1')F_0u_M, \ldots, N(t_n')F_0u_M)\tag{$N^\sigma(t_i')F^\sigma = N(t_i')u_N F^\sigma = N(t_i')F_0u_M$}\\
       &= (\eta_{t_2}\bullet N(\alpha)(N(t_1')F_0,\dots, N(t_n')F_0))u_M \tag{Whiskering}\\
       &= (\eta_{t_2'',\bar{x}}(M(t_1'),\ldots, M(t_n'))\bullet N_{\bar{x}}(t_2'')(\eta_{t_1'},\ldots, \eta_{t_n'})\bullet N(\alpha)(N(t_1')F_0,\dots, N(t_n')F_0))u_M \tag{\ref{eq:1}}\\
       &= (\eta_{t_2'',\bar{x}}(M(t_1'),\ldots, M(t_n'))\bullet N(\alpha)F_{\bar{x}}(M(t_1'),\dots, M(t_n'))\bullet N_{\bar{x}}(t_1'')(\eta_{t_1'},\ldots, \eta_{t_n'}))u_M\tag{Interchange}\\
       &= ((\eta_{t_2'',\bar{x}}\bullet N(\alpha)F_{\bar{x}})(M(t_1'),\ldots, M(t_n'))\bullet N_{\bar{x}}(t_1'')(\eta_{t_1'},\ldots, \eta_{t_n'}))u_M\tag{Whiskering}\\
       &= ((F_cM(\alpha)\bullet\eta_{t_1'',\bar{x}})(M(t_1'),\ldots, M(t_n'))\bullet N_{\bar{x}}(t_1'')(\eta_{t_1'},\ldots, \eta_{t_n'}))u_M\tag{Definition of lax morphism}\\
       &= (F_cM(\alpha)(M(t_1'),\ldots, M(t_n'))\bullet \eta_{t_1'',\bar{x}}(M(t_1'),\ldots, M(t_n'))\bullet N_{\bar{x}}(t_1'')(\eta_{t_1'},\ldots, \eta_{t_n'}))u_M \tag{Whiskering}\\
       &= (F_cM(\alpha)(M(t_1'),\ldots, M(t_n'))\bullet \eta_{t_1})u_M\tag{\ref{eq:1}}\\
       &= F_c\bar{p}_M \bullet \eta_{t_1}u_M\tag{Whiskering + Definition of $\bar{p}_M$}
   \end{align*}
   }
  
  \noindent \textbf{Case 3:} Assume $p = B(t_1',\ldots, q,\ldots, t_n')\colon B(t_1',\ldots, t_i',\ldots, t_n')\to B(t_1',\ldots, t_i'',\ldots, t_n')$, where $B\colon \bar{c} = c_1\cdots c_n\to c$ is a functor symbol in $\Sigma$ and the claim holds for the arrow $q\colon t_i'\to t_i''$. Notice that $\eta_{t_1}$ is definitionally the composite:
  {\small$$
  N(B)(N(t_1'),\ldots, N(t_n'))F_0\xRightarrow{N(B)(\eta_{t_1'},\ldots, \eta_{t_n'})} N(B)F_{\bar{c}}(M(t_1'),\ldots, M(t_n'))\xRightarrow{\eta^B(M(t_1'),\ldots, M(t_n'))} F_cM(t_1)
  $$}
  {\small\noindent
  \begin{align*}
       &\eta_{t_2}u_M\bullet \bar{p}_NF^\sigma\\
       &= \eta^B(M^\sigma(t_1'),\ldots, M^\sigma(t_i''),\ldots,  M^\sigma(t_n'))\bullet N(B)(\eta_{t_1'}u_M,\ldots,\eta_{t_i''}u_M,\ldots, \eta_{t_n'}u_M)\bullet\\
       & N(B)(N^\sigma(t_1')F^\sigma,\ldots,\bar{q}_N F^\sigma,\ldots, N^\sigma(t_n')F^\sigma) \tag{Definitions of $\eta_{t_2}$ and $\bar{p}_N$}\\
       &=  \eta^B(M^\sigma (t_1'),\ldots, M^\sigma(t_n'))\bullet N(B)(\eta_{t_1'}u_M, \ldots, \eta_{t_i''}u_M\bullet\bar{q}_NF^\sigma, \ldots, \eta_{t_n'}u_M)\tag{Whiskering}\\
       &= \eta^B(M^\sigma (t_1'),\ldots, M^\sigma(t_n'))\bullet N(B)(\eta_{t_1'}u_M,\ldots, F_{c_i}\bar{q}_M\bullet \eta_{t_i'}u_M,\ldots, \eta_{t_n'}u_M)\tag{Induction on $q$}\\
       &= \eta^B(M^\sigma (t_1'),\ldots, M^\sigma(t_n'))\bullet N(B)F_{\bar{c}}(M^\sigma(t_1'),\ldots,\bar{q}_M,\ldots, M^\sigma(t_n'))\bullet \\
       & N(B)(\eta_{t_1'}u_M,\ldots, \eta_{t_n'}u_M)\tag{Vertical composition componentwise}\\
       &= F_cM(B)(M^\sigma(t_1'),\ldots, \bar{q}_M,\ldots, M^\sigma(t_n'))\bullet \eta^B(M^\sigma(t_1'),\ldots, M^\sigma(t_n'))\bullet\\
       &N(B)(\eta_{t_1'}u_M,\ldots, \eta_{t_n'}u_M)\tag{Interchange}\\
       &= F_c\bar{p}_M\bullet \eta_{t_1}u_M \tag{Definitions of $\bar{p}_M$ and $\eta_{t_1}$}
  \end{align*}
  }

  Thus, the equation \eqref{eq:3} holds for all arrows in $G_\sigma$, and it remains to show the equation for paths. Assume $p$ is a path $p_n\cdots p_1$ of arrows in $G_\sigma$. We show the case $n = 2$, since the general claim follows by an easy induction on $n$, and we denote $p\colon t_1'\xrightarrow{p_1}t_2'\xrightarrow{p_2}t_3'$. Now
  \begin{align*}
       &\eta_{t_3'}u_M\bullet \bar{p}_NF^\sigma\\
       &= \eta_{t_3'}u_M\bullet \bar{p_2}_NF^\sigma \bullet \bar{p_1}_NF^\sigma \tag{$p = p_2p_1$ + Whiskering}\\
       &= F_c\bar{p_2}_M\bullet \eta_{t_2'}u_M\bullet \bar{p_1}_NF^\sigma \tag{Equation \eqref{eq:3} holds for $p_2$}\\
       &= F_c\bar{p_2}_M\bullet F_c\bar{p_1}_M\bullet \eta_{t_1'}u_M \tag{Equation \eqref{eq:3} holds for $p_1$}\\
       &= F_c\bar{p}_M\bullet \eta_{t_1'}u_M \tag{$p = p_2p_1$ + Whiskering}
  \end{align*}\qedhere

\end{proof}
\begin{theorem}[Induced Functor of Algebras Theorem]\label{thm:induced_morphism_between_algebras}
   Let $(F,\eta)\colon M\to N$ be a lax morphism of $\Sigma$-models in $\K$. Assume the following:
\begin{enumerate}
   \item $\eta_t\colon N(t)F_0\Rightarrow F_cM(t)$ is an isomorphism for all function coarity terms $t\colon c$.
   \item $\eta_t*u_Me_M$ is monic as a morphism $N(t)F_0u_Me_M\Rightarrow F_cM(t)u_Me_M$ for every equational coarity term $t\colon c$.
\end{enumerate}
Then there is a unique morphism $F^T\colon M^T\to N^T$, where $e_N F^T = F^\sigma e_M$.
\end{theorem}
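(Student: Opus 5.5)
Assumption 1 is exactly the hypothesis of Lemma~\ref{lem:morphism_between_inserters}, so the plan is to apply that lemma and then use the first universal property of the equifier $N^T$. The lemma gives the unique $F^\sigma\colon M^\sigma\to N^\sigma$ with $u_NF^\sigma=F_0u_M$, together with equation~\eqref{eq:3} for every path in $G_\sigma$. By the equifier's universal property~(i), it suffices to show that $F^\sigma e_M$ equifies the defining pair of $N^T$. That is, for every equation $(p,q\colon t_1\to t_2\colon c)\in T$ we must show
\[
\bar p_N * F^\sigma e_M=\bar q_N * F^\sigma e_M .
\]
Uniqueness of $F^T$ is then automatic, because $e_N$ is the universal morphism of the equifier and lifts along it are unique.

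To prove the displayed identity, fix $(p,q)\in T$ and whisker equation~\eqref{eq:3}, for both $p$ and $q$, on the right by $e_M$. Using $u_NF^\sigma=F_0u_M$, the domains match and we obtain
\[
\eta_{t_2}u_Me_M\bullet \bar p_NF^\sigma e_M = F_c\bar p_Me_M\bullet \eta_{t_1}u_Me_M,
\]
and the same identity with $q$ in place of $p$. By the defining property of the equifier $M^T$ we have $\bar p_Me_M=\bar q_Me_M$. Hence the right-hand sides agree after whiskering by $F_c$. Therefore
\[
\eta_{t_2}u_Me_M\bullet \bar p_NF^\sigma e_M=\eta_{t_2}u_Me_M\bullet \bar q_NF^\sigma e_M .
\]
The term $t_2$ is an equational coarity term, so assumption~2 says $\eta_{t_2}*u_Me_M$ is monic in the hom-category $\K(M^T,N_c)$. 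Cancelling it yields $\bar p_NF^\sigma e_M=\bar q_NF^\sigma e_M$. This holds componentwise for every $(p,q)\in T$, so by the universal property of products of $2$-cells it also holds for the tupled transformations $(\bar p_N)$ and $(\bar q_N)$.

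The main work, the compatibility~\eqref{eq:3} for arbitrary paths, is already handled by Lemma~\ref{lem:morphism_between_inserters}. What remains is bookkeeping of whiskerings, for which two points need care. First, $e_M$ must be whiskered onto~\eqref{eq:3}; this is valid by the interchange law. Second, monicity must be read in the right place: it is monicity of a $2$-cell in the hom-category, cancelled on the left after vertical composition. This matches the form of assumption~2.
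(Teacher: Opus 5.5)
Your proposal is correct and follows the paper's proof essentially step for step. You obtain $F^\sigma$ and equation~\eqref{eq:3} from Lemma~\ref{lem:morphism_between_inserters} and whisker by $e_M$. You then use $\bar p_Me_M=\bar q_Me_M$, cancel the monic $2$-cell $\eta_{t_2}u_Me_M$, and conclude existence and uniqueness of $F^T$ from the first universal property of the equifier $N^T$.
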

\begin{proof}
   The first assumption with Lemma~\ref{lem:morphism_between_inserters}(1) yields the unique morphism $F^\sigma\colon M^\sigma\to N^\sigma$. Consider the following diagram, where the indexing is over $(p,q\colon t_1\to t_2\colon c)\in T$:
   $$ 
\begin{tikzcd}[ampersand replacement=\&]
 {M^T} \& {M^\sigma} \&\& {\prod_{(p,q)} M_c} \\
 \\
 {N^T} \& {N^\sigma} \&\& {\prod_{(p,q)} N_c}
 \arrow["{{{{e_M}}}}", from=1-1, to=1-2]
 \arrow["{{\exists! F^T}}"{description}, dashed, from=1-1, to=3-1]
 \arrow[""{name=0, anchor=center, inner sep=0}, "{{{{(M^\sigma(t_1))}}}}", shift left=3, from=1-2, to=1-4]
 \arrow[""{name=1, anchor=center, inner sep=0}, "{{{{(M^\sigma(t_2))}}}}"', shift right=3, from=1-2, to=1-4]
 \arrow["{{{F^\sigma}}}"', from=1-2, to=3-2]
 \arrow["{{{\prod_{(p,q)}F_c}}}", from=1-4, to=3-4]
 \arrow["{{e_N}}"', from=3-1, to=3-2]
 \arrow[""{name=2, anchor=center, inner sep=0}, "{{{{(N^\sigma(t_1))}}}}", shift left=3, from=3-2, to=3-4]
 \arrow[""{name=3, anchor=center, inner sep=0}, "{{{{(N^\sigma(t_2))}}}}"', shift right=3, from=3-2, to=3-4]
 \arrow["{{{{(\bar{p}_M)}}}}"', shift right=3, Rightarrow, from=0, to=1]
 \arrow["{{{{(\bar{q}_M)}}}}", shift left=3, Rightarrow, from=0, to=1]
 \arrow["{{{{(\bar{q}_N)}}}}", shift left=3, Rightarrow, from=2, to=3]
 \arrow["{{{{(\bar{p}_N)}}}}"', shift right=3, Rightarrow, from=2, to=3]
\end{tikzcd}
$$
   To induce the morphism $F^T$ using the equifier, we must show that $\bar{p}_N*F^\sigma e_M = \bar{q}_N*F^\sigma e_M$ for every $(p,q)\in T$. Assume $(p,q\colon t_1\to t_2\colon c)\in T$. Now
   \begin{align*}
       \eta_{t_2}u_Me_M\bullet \bar{p}_NF^\sigma e_M
       &= F_c\bar{p}_Me_M\bullet \eta_{t_1}u_Me_M \tag{\eqref{eq:3}}\\
       &= F_c\bar{q}_Me_M\bullet \eta_{t_1}u_Me_M \tag{Equifier property of $e_M$}\\
       &= \eta_{t_2}u_Me_M\bullet \bar{q}_NF^\sigma e_M. \tag{\eqref{eq:3}}
   \end{align*}
   By the second assumption $\eta_{t_2}*u_Me_M$ is monic and hence $\bar{p}_N*F^\sigma e_M = \bar{q}_N*F^\sigma e_M$. Thus the equifier $N^T$ induces the unique morphism $F^T\colon M^T\to N^T$, where $e_NF^T = F^\sigma e_M$. 
\end{proof}
\begin{corollary}
   Let $(F,\eta) \colon M \to N$ be either a lax or oplax morphism of $\Sigma$-models in $\K$. 
   \begin{enumerate}
       \item If $F$ is lax and $T$ is algebraic, then $F$ induces a $1$-cell $F^T \colon M^T \to N^T$ in $\K$.
       \item If $F$ is oplax and $T$ is coalgebraic, then $F$ induces a $1$-cell $F^T \colon M^T \to N^T$ in $\K$.
       \item If $F$ is strong, then $F$ induces a $1$-cell $F^T \colon M^T \to N^T$ in $\K$.
   \end{enumerate}
\end{corollary}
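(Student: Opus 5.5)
The plan is to derive all three parts from Theorem~\ref{thm:induced_morphism_between_algebras}, by checking its two hypotheses in each case. The key observation is that for a sort $s\in S$ of type $c_s$ the $2$-cell $\eta_s$ is, by definition, the identity $\pi_s F_0 = F_{c_s}\pi_s$. An identity $2$-cell is invertible, and every whiskering of it is again an identity, hence monic.

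\textbf{Part (1).} If $T$ is algebraic, every coarity term is a sort $s$. So every function coarity term has $\eta_s$ an identity, which gives hypothesis (1). Every equational coarity term has $\eta_s*u_Me_M=\id$, which is monic, giving hypothesis (2). Theorem~\ref{thm:induced_morphism_between_algebras} then yields $F^T$ with $e_NF^T=F^\sigma e_M$.

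\textbf{Part (3).} If $F$ is strong, each $\eta^B$ is invertible. I show by induction on $t\in\Term_S^\Sigma$ that each $\eta_t$ is invertible.
\begin{itemize}
\item For a sort, $\eta_t$ is an identity.
\item For $t=B(t_1,\dots,t_n)$, $\eta_t=\eta^B(M(t_1),\dots,M(t_n))\bullet N(B)(\eta_{t_1},\dots,\eta_{t_n})$. Both factors are invertible, since whiskering, horizontal composition and the product tupling $(\alpha_i)_i$ all preserve isomorphisms.
\end{itemize}
Hence hypothesis (1) holds. Whiskering preserves invertibility, so $\eta_t*u_Me_M$ is invertible and in particular monic, giving hypothesis (2). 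Apply the theorem.

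\textbf{Part (2).} This is the only part needing more than a direct check, because Theorem~\ref{thm:induced_morphism_between_algebras} and Lemma~\ref{lem:morphism_between_inserters} are stated for lax morphisms. I would dualize by reversing $2$-cells, not by citing the theorem directly.

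For oplax $(F,\eta)$ we have $\eta_t\colon F_cM(t)\Rightarrow N(t)F_0$. If $T$ is coalgebraic, every arity term is a sort, so $\eta_{t_1}$ is an identity for every function arity term and every equational arity term. Repeating the proof of Lemma~\ref{lem:morphism_between_inserters} with the roles of $t_1$ and $t_2$ exchanged gives the following.
\begin{itemize}
\item $F^\sigma$ is induced by the inserter $N^\sigma$, via the $2$-cell $(\eta_{t_2}u_M)_f\bullet(\prod F_c)\phi_M\bullet(\eta_{t_1}^{-1}u_M)_f$. This uses invertibility only at the arity terms, where $\eta_{t_1}$ is the identity.
\item The same structural induction gives the naturality square
\[
\bar p_NF^\sigma\bullet\eta_{t_1}u_M=\eta_{t_2}u_M\bullet F_c\bar p_M .
\]
Cases 2 and 3 use the oplax compatibility condition and the interchange law in mirrored form.
\end{itemize}

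To finish part (2), take $(p,q)\in T$. Since $\eta_{t_1}u_Me_M$ is the identity, the naturality square gives
\[
\bar p_NF^\sigma e_M=\eta_{t_2}u_Me_M\bullet F_c\bar p_Me_M=\eta_{t_2}u_Me_M\bullet F_c\bar q_Me_M=\bar q_NF^\sigma e_M .
\]
No monicity condition is needed. The equifier $N^T$ then induces $F^T$.

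I expect the main obstacle to be checking that the dual version of Lemma~\ref{lem:morphism_between_inserters}, in particular Case 2 with the oplax form of \eqref{eq:1}, goes through verbatim. I would address this by observing that an oplax morphism $M\to N$ in $\K$ corresponds to a lax morphism in $\K^{\mathrm{co}}$. Since $\K^{\mathrm{co}}$ is still PIE-complete, with the same products, equifiers, and inserters of the reversed pair, the whole argument can be transported rather than re-verified.
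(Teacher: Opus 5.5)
Your proposal is correct and follows the paper's own argument: parts (1) and (3) check the hypotheses of Theorem~\ref{thm:induced_morphism_between_algebras} directly (identities at sorts, and invertibility of $\eta_t$ by induction on terms), while part (2) is handled by duality, which you work out in more detail than the paper's one-line ``follows dually.'' One refinement to your $\K^{\mathrm{co}}$ transport: in $\K^{\mathrm{co}}$ the models $M$ and $N$ are no longer $\Sigma$-models but models of $\Sigma$ with its transformation symbols reversed, and likewise the function symbols of $\sigma$ and the paths of $T$ must be reversed; a coalgebraic $T$ then becomes algebraic, so part (2) is exactly part (1) applied in $\K^{\mathrm{co}}$.
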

\begin{proof}
The lax case follows from Theorem~\ref{thm:induced_morphism_between_algebras}, since if $T$ is algebraic, then $\eta_t$ is an identity for each coarity term $t$ as coarity terms are sorts. The oplax case follows dually. The strong case follows from Theorem~\ref{thm:induced_morphism_between_algebras}, since we inductively see that $\eta_t$ is an isomorphism for each term $t$. 
\end{proof}

As the theory of monoids is algebraic, we recover the following classical result:
\begin{corollary}
   Let $F\colon M\to N$ be a lax monoidal functor between monoidal categories. Then there is an induced functor $\bar{F}\colon \Mon(M)\to \Mon(N)$ between the categories of internal monoids.
\end{corollary}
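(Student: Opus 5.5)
The plan is to specialize the preceding corollary to the signature pair of Examples~\ref{exs:theories}(1) in the $2$-category $\K=\CAT$. A monoidal category $M$ is a $\Sigma$-model: we take $M_c$ to be the underlying category, $M(\square)=\otimes$, $M(I)$ the unit, and $M(\alpha),M(\rho),M(\lambda)$ the associator and unitors. Note that the convention $x\square I\xrightarrow{\rho^x}x$ means we use $\rho$ itself rather than its inverse, which is harmless since these cells are invertible. As computed in Examples~\ref{exs:theories}(1), $M^T$ is then exactly $\Mon(M)$: the objects are triples $(x,m,e)$ satisfying the associativity and unit laws, and the morphisms are the arrows of $M_c$ commuting with $m$ and $e$. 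This is because morphisms of the inserter $M^\sigma$ are maps compatible with the inserted structure, and the equifier $M^T$ is a full subcategory.

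Next, I would turn a lax monoidal functor $(F,\mu^F,\epsilon^F)$ with $\mu^F_{x,y}\colon Fx\otimes Fy\to F(x\otimes y)$ and $\epsilon^F\colon I\to FI$ into a lax morphism of $\Sigma$-models in the sense of Definition~\ref{def:laxStrongMorphisms}:
\begin{itemize}
\item set $F_c=F$;
\item set $\eta^{\square}=\mu^F\colon N(\square)\circ(F\times F)\Rightarrow F\circ M(\square)$;
\item set $\eta^I=\epsilon^F\colon N(I)\Rightarrow F\circ M(I)$ (here $F_{()}$ is the identity on the terminal category).
\end{itemize}
Condition (3) of Definition~\ref{def:laxStrongMorphisms} must then be checked for $\alpha,\rho,\lambda$. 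Unwinding the recursive definition of $\eta_{t,\bar{x}}$, we get
\begin{align*}
\eta_{(x\square y)\square z}&=\mu^F_{x\otimes y,z}\circ(\mu^F_{x,y}\otimes Fz),\\
\eta_{x\square I}&=\mu^F_{x,I}\circ(Fx\otimes\epsilon^F),
\end{align*}
and similarly for the other terms. The required squares are then precisely the associativity and the two unit coherence axioms of a lax monoidal functor.

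Since the theory of monoids is algebraic (every coarity term is the sort $s$), part (1) of the preceding corollary yields a functor $F^T\colon M^T\to N^T$, that is, $\bar{F}\colon\Mon(M)\to\Mon(N)$. Unwinding equation~\eqref{eq:2}, this functor sends $(x,m,e)$ to $(Fx,\,Fm\circ\mu^F_{x,x},\,Fe\circ\epsilon^F)$, which recovers the classical construction.

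The only step requiring care is matching the coherence axioms of a lax monoidal functor with condition (3). The direction conventions for $\rho$ and $\lambda$ in $\Sigma$ must agree with the direction of the unit axioms, and the recursive $\eta_t$ must expand to the composites written above. This amounts to a direct but careful comparison of the diagrams.
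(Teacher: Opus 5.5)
Your proposal is correct and follows the paper's route: the paper obtains this result as an immediate instance of part (1) of the preceding corollary, because the theory of monoids in Examples~\ref{exs:theories}(1) is algebraic. You also spell out two points the paper leaves implicit, and both are right: that a lax monoidal functor is a lax morphism of $\Sigma$-models (condition (3) unwinds to exactly the associativity and two unit axioms), and that $M^T$ coincides with $\Mon(M)$.
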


\section{Internal Limits}
In this section, we talk about limits within an object $c$ in a $2$-category $\K$. A natural notion for a limit in $c$ is a right Kan extension of a morphism $d\colon I\to c$ along a morphism $k\colon I\to K$. Ordinary limits are attained when $\K$ is the $2$-category $\CAT$ and $K$ is the terminal category. Even weighted limits in the enriched context are captured using the formalism of Kan extensions \cite{Kelly:Enriched}.

Understanding the lifting of Kan extensions along PIE limits allows us to prove completeness properties of $0$-cells $M^T$ of bialgebras. This permits us to prove accessibility and local presentability for categories $M^T$ with suitable assumptions on $M$ and $T$.

\begin{definition}
   Let $\C$ be a category. We call an object $c$ in $\C$ a \textbf{quotient initial} object if for any object $c'$ in $\C$ there exists at most one morphism $c\to c'$. \textbf{Subterminality} of an object is defined dually.
\end{definition}
\begin{definition}[Kan Extensions]
   Let $K\xleftarrow{k}I\xrightarrow{d}c$ be morphisms in a $2$-category $\K$. 
       We define the category $\LE(d, k)$ of \textbf{left extensions} $(l\colon K\to c,\varepsilon\colon d\Rightarrow lk)$ of $d$ along $k$ as the comma category $d\downarrow k^*$, where $k^*$ is the pre-composition functor $\K(K,c)\to \K(I,c)$. An initial object of $\LE(d,k)$ is called a \textbf{left Kan extension }of $d$ along $k$. Dually, we define $\RE(d,k)$ of \textbf{right extensions} of $d$ along $k$ as the comma category $k^*\downarrow d$. We call a terminal object in $\RE(d,k)$ a \textbf{right Kan extension} of $d$ along $k$.
\end{definition}

\begin{definition}
   Let $F = (F_i\colon\C\to \D_i)_{i\in I}$ be a family of functors. We say that $F$
   \begin{enumerate}
       \item \textbf{preserves} initial objects, if $F_ic$ is an initial object of $\D_i$ for each initial object $c$ in $\C$ and every $i\in I$.
       \item \textbf{jointly strictly creates} initial objects, if for any initial object $(d_i)_i$ in $\prod_i \D_i$ there is a unique object $c$ in $\C$ so that $F_ic = d_i$ for all $i\in I$ and moreover, $c$ is an initial object in $\C$. In the case $I = \{*\}$, we identify $F$ with $F_*$ and say that $F_*$ \textbf{strictly creates} initial objects.
   \end{enumerate}
   We define the preservation and (joint) strict creation of the terminal object dually. 
\end{definition}
\begin{definition}[Preservation and Strict Creation of Kan Extensions]
   Consider morphisms $K\xleftarrow{k}I\xrightarrow{d}c\xrightarrow{f}c'$ in a $2$-category $\K$. We say that $f$ preserves or strictly creates the left Kan extension of $d$ along $k$ if the induced functor $f_*\colon \LE(d,k)\to \LE(fd,k), \eta\mapsto f*\eta,$ preserves or strictly creates the initial object, respectively. Dually, we define the preservation and strict creation of right Kan extensions. Joint creation of left/right Kan-extensions of $K\xleftarrow{k}I\xrightarrow{d}c$ by a family $(f_i\colon c\to d_i)_i$ of morphisms is defined similarly.
\end{definition}

The following theorem serves as the primary technical result of this section, establishing how the existence of Kan extensions lifts along PIE limits. This theorem provides a unified $2$-categorical framework for limit and colimit inheritance. Consequently, classical results showing that models of universal algebraic theories\footnote{Universal algebraic theory $T$ refers, from our perspective, to an algebraic theory, where the associated $2$-signature is the cartesian signature $\Sigma_{\text{Cart}}$ in Example \ref{ex:$2$-signatures}.}, Eilenberg-Moore categories, and categories of coalgebras inherit structures from their background categories follow as immediate consequences of these three basic $2$-categorical constructions.

\begin{theorem}[Lifting Kan Extensions Along PIE limits]\label{thm:pie_kan_creation}
   Let $\K$ be a $2$-category. Then the following assertions hold:
   \begin{enumerate}
       \item Let $(c_i)_{i \in J}$ be a family of objects in $\K$ with the product $c = \prod_{i\in J} c_i$ existing in $\K$. Then the family $(\pi_i \colon c \to c_i)_{i\in J}$ of projections preserves and jointly strictly creates all right and left Kan extensions.
       \item Consider the inserter diagram
       $$
\begin{tikzcd}
{\Ins(f,g)} \arrow[d, "u"'] \arrow[r, "u"]        & a \arrow[d, "g"] \\
a \arrow[r, "f"'] \arrow[ru, "\phi" description, Rightarrow] & b               
\end{tikzcd}
$$
       in $\K$. Then the canonical morphism $u$ strictly creates left Kan extensions that $f$ preserves, and right Kan extensions that $g$ preserves.
       \item Let $\Eq(\eta,\theta)\xrightarrow{u}c$ be the equifier of $\eta,\theta \colon f \Rightarrow g \colon c \to c'$. Then $u$ strictly creates left (right) Kan extensions whose quotient initiality (subterminality) is preserved by $f$ $(g)$. 
   \end{enumerate}
\end{theorem}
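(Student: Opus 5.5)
The plan is to reduce everything to the behaviour of the categories of extensions under the hom-functors $\K(I,-)$ and $\K(K,-)$. These preserve PIE limits by the remark following the definition of PIE limits: the $1$-cell and $2$-cell parts of each universal property say exactly that $\K(x,-)$ sends products, inserters and equifiers to the corresponding constructions in $\CAT$. Since $\LE(d,k)=d\downarrow k^*$ is a comma category built out of $\K(I,c)$ and $\K(K,c)$, each PIE limit of the target object should be reflected in a concrete description of $\LE(d,k)$ in terms of extensions in the factors. I treat left Kan extensions in each case; the right Kan statements follow by the formal dual, with $g$ replacing $f$ in the inserter case and subterminality replacing quotient initiality in the equifier case.

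\textbf{Products.} Let $c=\prod_i c_i$. The two universal properties of the product give an isomorphism of categories $\LE(d,k)\cong\prod_i\LE(\pi_i d,k)$, sending $(l,\varepsilon)$ to $(\pi_i l,\pi_i*\varepsilon)_i$; this isomorphism is exactly the family $((\pi_i)_*)_i$.
\begin{itemize}
\item Under it, an object $(d_i)_i$ is initial in the product category if and only if each $d_i$ is initial. This uses that the other factors are nonempty, which holds because the tuple $(d_i)_i$ exists.
\item Preservation follows from this description of initial objects.
\item Joint strict creation follows because the functor is bijective on objects: an initial tuple has a unique preimage, and that preimage is initial.
\end{itemize}

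\textbf{Inserters.} Let $d\colon I\to\Ins(f,g)$, and let $(l',\varepsilon')$ be a left Kan extension of $ud$ along $k$ whose image $(fl',f\varepsilon')$ is initial in $\LE(fud,k)$.
\begin{enumerate}
\item Consider the object $(gl',\, g\varepsilon'\bullet \phi d)$ of $\LE(fud,k)$. Initiality of $(fl',f\varepsilon')$ yields a unique $\psi\colon fl'\Rightarrow gl'$ with $\psi k\bullet f\varepsilon'=g\varepsilon'\bullet\phi d$.
\item The $1$-cell property of the inserter, applied to $(l',\psi)$, gives a unique $l$ with $ul=l'$ and $\phi l=\psi$.
\item The defining equation of $\psi$ is exactly the compatibility square in the $2$-cell property of the inserter for $\varepsilon'\colon ud\Rightarrow ulk$. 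That property therefore gives a unique $\varepsilon\colon d\Rightarrow lk$ with $u*\varepsilon=\varepsilon'$.
\item Uniqueness of the lift: any lift $(l,\varepsilon)$ of $(l',\varepsilon')$ must satisfy step 1 with $\phi l$ in place of $\psi$, so $\phi l=\psi$ and $l$, $\varepsilon$ are forced.
\item Initiality: given $(m,\delta)$ in $\LE(d,k)$, take the unique $\tau'\colon l'\Rightarrow um$ in $\LE(ud,k)$. The two composites $\phi m\bullet f\tau'$ and $g\tau'\bullet\psi$ are both morphisms in $\LE(fud,k)$ from $(fl',f\varepsilon')$ to $(gum,\, gu\delta\bullet\phi d)$, so they coincide by initiality. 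This is again the inserter compatibility square, so $\tau'$ lifts to a unique $\tau\colon l\Rightarrow m$. Uniqueness of $\tau$ follows from faithfulness of $u*(-)$.
\end{enumerate}

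\textbf{Equifiers.} Here $u_*\colon\LE(d,k)\to\LE(ud,k)$ is fully faithful and injective on objects, by the two equifier properties. Given an initial $(l',\varepsilon')$ in $\LE(ud,k)$, it therefore suffices to show that $l'$ factors through $u$, i.e.\ $\eta l'=\theta l'$.
\begin{enumerate}
\item By interchange, $\eta l'k\bullet f\varepsilon'=g\varepsilon'\bullet\eta ud$. Using $\eta ud=\theta ud$, which holds because $d$ factors through $u$, the same equation holds with $\theta$ in place of $\eta$.
\item Hence $\eta l'$ and $\theta l'$ are both morphisms $(fl',f\varepsilon')\to(gl',\,g\varepsilon'\bullet\eta ud)$ in $\LE(fud,k)$.
\item By the hypothesis that $f$ preserves quotient initiality, there is at most one such morphism, so $\eta l'=\theta l'$.
\item Then $l'=ul$ for a unique $l$. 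The $2$-cell $\varepsilon'$ lifts uniquely by full faithfulness of $u*(-)$, and initiality transfers along the fully faithful functor $u_*$.
\end{enumerate}

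I expect the main obstacle to be the inserter case. One must check carefully that the universal $2$-cell $\phi l$ is forced to be the $\psi$ produced by initiality, and that the inserter's compatibility condition for $2$-cells reduces exactly to equalities between morphisms out of the initial object $(fl',f\varepsilon')$; this is also the only place where preservation by $f$ is genuinely used.
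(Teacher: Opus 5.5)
Your proposal is correct and follows essentially the paper's proof. For inserters you obtain the comparison $2$-cell $\psi$ from initiality of $(fl',f\varepsilon')$ against $(gl',g\varepsilon'\bullet\phi d)$, lift via the $1$-cell and $2$-cell properties, and get initiality from $\phi m\bullet f\tau'=g\tau'\bullet\psi$; for equifiers, quotient initiality forces $\eta l'=\theta l'$. The differences from the paper are cosmetic: you package the product case as the isomorphism $\LE(d,k)\cong\prod_i\LE(\pi_i d,k)$ rather than extending families explicitly, and in the inserter case you prove existence of the lift before its uniqueness.
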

\begin{proof} We show the claims for left Kan extensions as the cases for right Kan extensions follow dually.
   \begin{enumerate}

   \item Let $(l,\varepsilon\colon d\Rightarrow lk)$ be the left Kan extension of $d$ along $k$. Let $i\in J$. We show that $(\pi_il, \pi_i*\varepsilon)$ is an initial left extension. Let $(t_i,\theta_i)$ be an object in $\LE(\pi_id,k)$. Extend 
       $$
       t_j = \begin{cases}
           t_i,\text{ if } i = j,\\
           \pi_jl, \text{ if } i\neq j
       \end{cases}\quad\text{and}\quad\theta_j =\begin{cases}
           \theta_i\colon \pi_id\Rightarrow t_ik\colon I\to c_i,\text{ if }i = j\\
           \pi_j*\varepsilon\colon \pi_jd\Rightarrow t_jk\colon I\to c_j,\text{ if } i\neq j
       \end{cases}
       $$
       for $j\in J$. The second universal property of the product $\prod_i c_i$ induces the transformation $\theta = (\theta_j)_j\colon d\Rightarrow tk$, where $t = (t_j)_j$. By initiality, we have a unique morphism $\phi\colon (l,\varepsilon)\to (t, \theta)$ in $\LE(d,k)$. Thus $\pi_i*\phi$ is a morphism $(\pi_il, \pi_i*\varepsilon)\to (t_i,\theta_i)$ of left extensions. 

       To show uniqueness, let $\alpha\colon (\pi_il,\pi_i*\varepsilon)\to (t_i,\theta_i)$ be any morphism in $\LE(\pi_id,k)$. We define a family of morphisms $\alpha_j \colon \pi_j l \Rightarrow t_j$ by setting $\alpha_i \coloneqq \alpha$ and $\alpha_j \coloneqq \id_{\pi_j l}$ for $j \neq i$. The universal property of the product yields a unique $2$-cell $\psi \colon l \Rightarrow t$ such that $\pi_j * \psi = \alpha_j$ for all $j \in J$. Since $(\alpha_j * k) \bullet (\pi_j * \varepsilon) = \theta_j$ holds for all components $j \in J$, the uniqueness of $2$-cells into a product ensures that $\psi$ is a morphism of left extensions $(l, \varepsilon) \to (t, \theta)$. By the initiality of $(l,\varepsilon)$, we must have $\psi = \phi$, which implies $\alpha = \pi_i * \phi$. Thus, $\pi_i * \phi$ is the unique morphism, confirming that $(\pi_i l, \pi_i * \varepsilon)$ is initial. Hence, $\pi_i$ preserves left Kan extensions. The jointly strict creation of Kan extensions along the projections follows directly from the universal properties of the product $\prod_i c_i$.
       
       \item Let $(l',\varepsilon')$ be a left Kan extension of $ud$ along $k$ that is preserved by $f$. We show that there is a unique lift $(l,\varepsilon)$ in $\LE(d,k)$ so that $l' = ul$ and $\varepsilon' = u*\varepsilon$ and furthermore, $(l,\varepsilon)$ is the left Kan extension of $d$ along $k$. Assume that $(l_1,\varepsilon_1)$ and $(l_2,\varepsilon_2)$ are lifts of $(l',\varepsilon')$ along $u$. We show that $(l_1,\varepsilon_1) = (l_2,\varepsilon_2)$. Note that $\phi*l_1 = \phi*l_2$ as both of these define morphisms $(fl',f*\varepsilon')\to (gl',(g*\varepsilon')\bullet(\phi*d))$, which is seen by the following for $i = 1,2$:
       \begin{align*}
           (\phi*l_ik)\bullet (f*\varepsilon')
           &= (\phi*l_ik)\bullet (fu*\varepsilon_i)\tag{$\varepsilon' = u*\varepsilon_i$}\\
           &= (gu*\varepsilon_i)\bullet (\phi*d)\tag{Bifunctoriality of $*$}\\
           &= (g*\varepsilon')\bullet(\phi*d)\tag{$\varepsilon' = u*\varepsilon_i$}
       \end{align*}
       Thus  $\phi*l_1 = \phi*l_2\colon fl'\Rightarrow gl'$ by the initiality of $(fl',f*\varepsilon')$ in $\LE(fud,k)$. By the first universal property of the inserter $\Ins(f,g)$ there is a unique $l\colon K\to \Ins(f,g)$ so that $l' = ul$ and $\phi*l_1 = \phi*l_2 = \phi*l$. Thus $l_1 = l_2$ and the faithfulness of $u*(-)$ shows that $\varepsilon_1 = \varepsilon_2$.

       Next, we construct the lift $(l,\varepsilon)$ of $(l',\varepsilon')$ along $u$. Using the initiality of $(fl',f*\varepsilon')$ in $\LE(fud,k)$, we define $\eta\colon fl'\Rightarrow gl'$ as the unique morphism 
       \begin{equation}\label{cd:1}
           (fl',f*\varepsilon'\colon fud\Rightarrow fl'k)\xrightarrow{\eta} (gl',fud\xRightarrow{\phi*d}gud\xRightarrow{g*\varepsilon'}gl'k).
       \end{equation}
       The first universal property of the inserter $\Ins(f,g)$ with the transformation $fl'\xRightarrow{\eta} gl'$ induces a unique morphism $l\colon K\to \Ins(f,g)$ so that $\eta = \phi*l$ and $ul = l'$. Consider the transformation $\varepsilon'\colon ud\Rightarrow ulk$. Now
       \begin{align*}
           (\phi* lk)\bullet (f*\varepsilon')
           &= (\eta*k)\bullet(f*\varepsilon') \tag{$\eta = \phi*l$}\\
           &= (g*\varepsilon')\bullet (\phi*d). \tag{$\eta$ is a morphism of left extensions \ref{cd:1}}
       \end{align*}
       Thus, the second universal property of the inserter $\Ins(f,g)$ induces a transformation $\varepsilon\colon d\Rightarrow lk$ so that $\varepsilon' = u*\varepsilon$. Thus $(l,\varepsilon)$ is the unique lift of $(l',\varepsilon')$ along $u$.
       
       Let $(t,\theta)$ be a left extension of $d$ along $k$. Lastly, we show that there exists a unique morphism $\lambda\colon (l,\varepsilon)\to(t,\theta)$ in $\LE(d,k)$. Consider the unique morphism $\lambda'\colon (l',\varepsilon')\to (ut,u*\theta)$ in $\LE(ud, k)$. The uniqueness of $\lambda$ is witnessed by the facts that $u*(-)$ is faithful and $u*\lambda = \lambda'$ as $u*\lambda\colon (l',\varepsilon')\to (ut,u*\theta)$ is a morphism in $\LE(ud,k)$. It is sufficient to show that $\lambda'$ has a lift $\lambda\colon l\Rightarrow t$ along $u$, since then faithfulness of $u*(-)$ shows that $\lambda$ is a morphism $(l,\varepsilon)\to(t,\theta)$. For constructing the lift $\lambda$ of $\lambda'$ along $u$ it suffices to show that  $(g*\lambda')\bullet(\phi*l) = (\phi*t)\bullet(f*\lambda')$ holds and then apply the second universal property of the inserter $\Ins(f,g)$. Since $\lambda'\colon (l',\varepsilon')\to (ut,u*\theta)$ is a morphism in $\LE(ud,k)$ and $*$ is a bifunctor, the diagram
       $$
\begin{tikzcd}
                                           & fud \arrow[ld, "f*\varepsilon'"', Rightarrow] \arrow[d, "fu*\theta" description, Rightarrow] \arrow[r, "\phi*d" description, Rightarrow] \arrow[rr, "fu*\varepsilon = f*\varepsilon'", Rightarrow, bend left] & gud \arrow[rd, "gu*\varepsilon" description, Rightarrow] \arrow[d, "gu*\theta" description, Rightarrow] & fulk \arrow[d, "\phi*lk", Rightarrow]      \\
fulk \arrow[r, "f*\lambda'*k"', Rightarrow] & futk \arrow[r, "\phi*tk"', Rightarrow]                                                                                                                                                       & gutk                                                                                                    & gulk \arrow[l, "g*\lambda'*k", Rightarrow]
\end{tikzcd}        $$
       commutes. Thus, the transformations $(\phi*t)\bullet(f*\lambda')$ and $(g*\lambda')\bullet(\phi*l) $ define morphisms
       $$
       (fl',f*\varepsilon')\to (gut, fud\xRightarrow{\phi*d}gud\xRightarrow{gu\theta}gutk)
       $$
       and hence $(g*\lambda')\bullet(\phi*l) = (\phi*t)\bullet(f*\lambda')$ by the initiality of $(fl',f*\varepsilon')$ in $\LE(fud,k)$.

       \item Let $u\colon \Eq(\eta,\theta)\to c$ be the equifier of $\eta,\theta\colon f\Rightarrow g\colon c\to c'$. Let $K\xleftarrow{k}I\xrightarrow{d} \Eq(\eta,\theta)$ be a span in $\K$. Assume that $(l',\varepsilon')$ and $(fl',f*\varepsilon')$ form an initial object in $\LE(ud,k)$ and a quotient initial object in $\LE(fud,k)$, respectively. We show $u$ strictly creates the left Kan extension of $d$ along $k$. The first universal property of equifier shows that $l'$ has at most one lift along $u$, and the second then shows similarly for $\varepsilon'$. To construct the lift $l$ of $l'$ along $u$, it suffices to show that $\eta*l' = \theta*l'$. As $\eta*u = \theta *u$, we have by bifunctoriality of the horizontal composition that the two squares in the diagram
       $$
       \begin{tikzcd}[column sep = 2 cm]
fud \arrow[d, "\eta*ud = \theta*ud" description, Rightarrow] \arrow[r, "f*\varepsilon'", Rightarrow] & fl'k \arrow[d, "\theta*l'k", shift left, Rightarrow] \arrow[d, "\eta*l'k"', shift right, Rightarrow] \\
gud \arrow[r, "g*\varepsilon'"', Rightarrow]                                              & gl'k                                                                         
\end{tikzcd}
       $$
       commute. This shows that $\eta*l'$ and $\theta*l'$ define morphisms $(fl',f*\varepsilon')\to (gl', fud\xRightarrow{\eta*ud}gud\xRightarrow{g\varepsilon'}gl'k)$ in $\LE(fud,k)$ and thus by the quotient initiality of $(fl',f*\varepsilon')$ we have that $\eta*l = \theta*l$. Hence $(l',\varepsilon')$ has a lift $(l,\varepsilon)$ along $u$. The full faithfulness of $u*(-)$ shows that $(l,\varepsilon)$ is initial in $\LE(d,k)$. \qedhere

   \end{enumerate}
\end{proof}
\begin{definition}
Given a signature pair $(\Sigma,\sigma = (S,F))$ a $\sigma$-theory $T$ and a $\Sigma$-model $M$ in a PIE complete $2$-category, we call the morphism $M(t)\colon M_0\to M_c$ a \textbf{function/equational arity/coarity support}, if $t\colon c\in \Term_S^\Sigma$ is a function/equational arity/coarity term, respectively, relative to $T$.
\end{definition}
\begin{corollary}\label{cor:LimitsLiftedToBialgebras}
   Let $(\Sigma,\sigma)$ be a signature pair with a $\sigma$-theory $T$. Let $M$ be a $\Sigma$-model within a PIE-complete $2$-category $\K$. Consider the canonical morphism $u\colon M^T\to M_0$. Then the following assertions hold:
   \begin{enumerate}
       \item The morphism $u$ strictly creates those left Kan extensions that function supports $M(t_1)$ preserve and whose quotient initiality equational supports $M(t_1')$ preserve.
       \item The morphism $u$ strictly creates those right Kan extensions that function coarity supports $M(t_2)$ preserve and whose subterminality equational coarity supports $M(t'_2)$ preserve. 
       \item If $T$ is algebraic, then $u$ strictly creates all right Kan extensions.
       \item If $T$ is coalgebraic, then $u$ strictly creates all left Kan extensions.
   \end{enumerate}
\end{corollary}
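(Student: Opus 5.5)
The canonical morphism $u\colon M^T\to M_0$ factors as $u = u_M\circ e_M$, where $u_M\colon M^\sigma\to M_0$ is the inserter projection and $e_M\colon M^T\to M^\sigma$ is the equifier projection. The plan is to apply Theorem~\ref{thm:pie_kan_creation} to each factor and compose. The first step is a composition lemma: if $g\colon a\to b$ strictly creates the left Kan extension of $gd$ along $k$, and $h\colon b\to c$ strictly creates the left Kan extension of $d'$ along $k$ for the relevant $d'$, then $hg$ strictly creates it. The proof is routine. We have $(hg)_* = h_*\circ g_*$ on categories of left extensions. An initial object of $\LE(hgd,k)$ lifts uniquely through $h_*$ to an initial object of $\LE(gd,k)$. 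That object lifts uniquely through $g_*$ to an initial object of $\LE(d,k)$. Uniqueness of the composite lift follows because every lift along $hg$ yields, after applying $g_*$, a lift along $h$.

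For the inserter step, $M^\sigma = \Ins(f,g)$ with $f = (M(t_1))_f$ and $g = (M(t_2))_f\colon M_0\to\prod_f M_c$. By Theorem~\ref{thm:pie_kan_creation}(2), $u_M$ strictly creates left Kan extensions that $f$ preserves. By Theorem~\ref{thm:pie_kan_creation}(1), the product projections jointly strictly create Kan extensions and preserve them. So $f$ preserves a left Kan extension precisely when each component $M(t_1)$ does, since $\LE$ of a product-valued diagram is the product of the $\LE$'s. Hence $u_M$ creates left Kan extensions preserved by all function arity supports.

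For the equifier step, $e_M$ equifies $(\bar p_M),(\bar q_M)\colon (M^\sigma(t_1'))\Rightarrow(M^\sigma(t_2'))$. By Theorem~\ref{thm:pie_kan_creation}(3), $e_M$ creates left Kan extensions in $M^\sigma$ whose quotient initiality is preserved by $(M^\sigma(t_1'))_{(p,q)} = (M(t_1'))\circ u_M$. We need to translate the hypothesis on $M(t_1')$ into this. The Kan extension in $M^\sigma$ is the created lift of the one in $M_0$. Applying $(M^\sigma(t_1'))_*$ to it therefore gives $(M(t_1'))_*$ applied to the extension in $M_0$, which is quotient initial by hypothesis. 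Quotient initiality in the product $\LE$ is equivalent to quotient initiality in each factor, because morphisms into a product of left extensions are tuples. This is where the argument is most delicate: the hypotheses are phrased on $M_0$ while the theorem needs them on $M^\sigma$ and $M^T$. So one must chain the steps in the right order. First take the Kan extension of $ud$ in $M_0$, lift it to $M^\sigma$ via the inserter step, observe that its image under $M^\sigma(t_1')$ coincides with the image of the original under $M(t_1')$, and then lift it to $M^T$. Composition then gives assertion (1). Assertion (2) is the formal dual, using the right-Kan halves of Theorem~\ref{thm:pie_kan_creation}(2),(3) together with $g$ and the coarity supports.

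For (3), assume $T$ is algebraic, so every coarity term is a sort $s$. Then $M(s) = \pi_s$ is a product projection, which preserves all right Kan extensions by Theorem~\ref{thm:pie_kan_creation}(1). It therefore preserves terminal objects of $\RE$, and a terminal object is in particular subterminal. So the hypotheses of (2) hold for every right Kan extension, and (3) follows. Assertion (4) follows dually from (1).
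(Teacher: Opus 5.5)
Your proposal is correct and follows essentially the same route as the paper. Both factor $u = u_M e_M$, apply Theorem~\ref{thm:pie_kan_creation}(1)--(2) to the inserter step and Theorem~\ref{thm:pie_kan_creation}(3) to the equifier step (transferring the hypothesis via $M^\sigma(t_1') = M(t_1')\circ u_M$), and derive (3)--(4) from the fact that projections preserve all Kan extensions. Your explicit composition lemma and your componentwise treatment of (quotient) initiality in products of extension categories only spell out steps that the paper leaves implicit.
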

\begin{proof}
   It suffices to only show the first case, as the second follows dually, and the third and fourth follow from the fact that projections preserve all left and right Kan extensions by Theorem~\ref{thm:pie_kan_creation}(1).

   Note that $M^\sigma = \Ins(L_1,L_2)$, where $L_i= (M(t_i))_{f\colon t_1\to t_2}\colon M_0\to \prod_f M_{c_f}$. The morphism $L_1$ preserves exactly those left Kan-extensions that morphisms $M(t_1)$, for $\sigma$-function symbols $f\colon t_1\to t_2$, preserve by Theorem~\ref{thm:pie_kan_creation} (1). Thus $u_M\colon M^\sigma\to M_0$ strictly creates those left Kan extensions that $M(t_1)$ preserves for each $\sigma$-function symbol $f\colon t_1\to t_2$ by Theorem $\ref{thm:pie_kan_creation}(2)$. Similarly, using Theorem~\ref{thm:pie_kan_creation}(3), we have that the canonical morphism $M^T\to M^\sigma$ strictly creates those left Kan extensions whose quotient initiality is preserved by $M^\sigma(t_1')\colon M^\sigma\to M_0\xrightarrow{M(t_1')} M_{c'}$ for every equation $(p,q\colon t_1'\to t_2')\colon c'\in T$. Whence, the claim follows.
\end{proof}
\begin{corollary}\label{cor:limitPreservationFromAtomicPreservation}
   Let $(\Sigma,\sigma)$ be a signature pair with a $\Sigma$-model $M$ in a PIE-complete $2$-category $\K$ and a $\sigma$-theory $T$. Let $k\colon I\to K$ be a morphism in $\K$. Assume $M(B)$ preserves all left Kan extensions along $k$ for each functor symbol $B$ in $\Sigma$. Let $t\colon c$ be a term. Then $M(t)\colon M_0\to M_c$ preserves all left Kan extensions along $k$. In particular, the canonical morphism $u\colon M^T\to M_0$ strictly creates all left Kan extensions along $k$.
\end{corollary}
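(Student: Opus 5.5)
The plan is a structural induction on the term $t\colon c\in\Term_S^\Sigma$, with the ``in particular'' clause then following from Corollary~\ref{cor:LimitsLiftedToBialgebras}(1).

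\textbf{Base case.} If $t=s$ is a sort, then $M(t)=\pi_s\colon M_0\to M_{c_s}$ is a product projection. Theorem~\ref{thm:pie_kan_creation}(1) says projections preserve all left Kan extensions, so $M(t)$ preserves all left Kan extensions along $k$.

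\textbf{Inductive step.} Let $t=B(t_1,\dots,t_n)$. Then $M(t)$ is the composite
$$M_0\xrightarrow{(M(t_1),\dots,M(t_n))}\prod_i M_{c_i}\xrightarrow{M(B)}M_c,$$
and preservation of left Kan extensions is stable under composition, since $(gf)_*=g_*f_*$ on categories of left extensions. So it suffices to show that the tupled morphism $(M(t_i))_i$ preserves left Kan extensions along $k$. Let $(l,\varepsilon)$ be a left Kan extension of some $d$ along $k$ in $M_0$. By the induction hypothesis, each $(M(t_i)l,M(t_i)*\varepsilon)$ is initial in $\LE(M(t_i)d,k)$. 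The joint strict creation in Theorem~\ref{thm:pie_kan_creation}(1) then gives that the unique object of $\LE((M(t_i))_i d,k)$ projecting to these is initial. Since $\pi_i*((M(t_i))_i*\varepsilon)=M(t_i)*\varepsilon$, that object is $((M(t_i))_i l,(M(t_i))_i*\varepsilon)$, so the tuple preserves the extension. Finally, $M(B)$ preserves left Kan extensions along $k$ by hypothesis, and this closes the induction.

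There is one subtlety to handle here. If $B$ is a constant, $n=0$ and the tuple goes into the terminal product $1$. The statement then relies on the hypothesis on $M(B)$ for all left Kan extensions along $k$ in the object $1$. This hypothesis covers that case, since the unique extension into $1$ is preserved trivially by the argument above, so no extra work is needed.

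\textbf{``In particular'' clause.} Every function arity support $M(t_1)$ preserves all left Kan extensions along $k$. Moreover, an initial object is in particular quotient initial, so every equational arity support $M(t_1')$ preserves quotient initiality of these extensions. Corollary~\ref{cor:LimitsLiftedToBialgebras}(1) therefore yields that $u\colon M^T\to M_0$ strictly creates all left Kan extensions along $k$.

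The main obstacle is the tupling step. Theorem~\ref{thm:pie_kan_creation}(1) is phrased for projections, so one must carefully extract from joint strict creation that a morphism into a product preserves a Kan extension exactly when each of its components does. I would prove this small lemma first, directly from the product's $2$-dimensional universal property, by mirroring the uniqueness argument in the proof of Theorem~\ref{thm:pie_kan_creation}(1).
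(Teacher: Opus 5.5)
Your proposal is correct and matches the paper's proof. The paper also uses structural induction on $t$: the base case is a projection, and the inductive step factors $M(t)$ as $M(B)\circ(M(t_1),\dots,M(t_n))$, handles the tuple with Theorem~\ref{thm:pie_kan_creation}(1), and deduces the final clause from Corollary~\ref{cor:LimitsLiftedToBialgebras}(1). Your extra care in deriving tuple-preservation from joint strict creation, and in noting that initial implies quotient initial, just makes explicit steps the paper leaves implicit.
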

\begin{proof}
   We proceed by structural induction on the term $t$. Let $d\colon I\to M_0$ be a morphism and assume the left Kan extension of $d$ along $k$ exists. If $t$ is a sort $s$, the term functor is a canonical projection $M_0 \to M_{c_s}$. Projections preserve all Kan extensions by Theorem~\ref{thm:pie_kan_creation}(1), so the base case holds.

   Assume then that $t = B(t_1,\dots, t_n)$ and that $M(t_i)$ preserves left Kan extensions along $k$ for each $i\leq n$. Now
   $$
   M(t) = M(B)\circ (M(t_1),\dots, M(t_n))
   $$
   and since by assumption $M(t_i)$ preserves left Kan extensions along $k$, it follows from Theorem~\ref{thm:pie_kan_creation}(1) that $(M(t_1),\dots, M(t_n))$ preserves them as well. By assumption, $M(B)$ preserves left Kan extensions along $k$ and therefore the composite $M(t)$ preserves them also.

   The last part follows from Corollary~\ref{cor:LimitsLiftedToBialgebras}(1).
\end{proof}

Consider a sifted category $I$, a non-empty category where the diagonal $\Delta\colon I\to I\times I$ is final \cite{AdamekRosickyVitale2010SiftedColimits}.\footnote{Recall that a functor $F\colon \C\to \D$ is final if $d\downarrow F$ is a connected (implying non-empty) category for each object $d$ in $\D$ and equivalently for any diagram $H\colon \D\to \E$ the functor of cocones $F^*\colon \mathrm{Cocone}(H)\to \mathrm{Cocone}(HF)$ is an isomorphism of categories.} All directed posets and the category for a reflexive pair $\begin{tikzcd}
\bullet \arrow[r, shift left=2] \arrow[r, shift right=2] & \bullet \arrow[l]
\end{tikzcd}$, with the middle arrow defining a joint section, form sifted categories.

\begin{lemma}\label{lem:SiftedTotalityFromComponents}
   Let $F\colon \C_1\times\ldots\times \C_n\to \C$ be a functor and let $I$ be a sifted category. Then $F$ preserves $I$-colimits if and only if $F$ preserves $I$-colimits componentwise, meaning the functor $x_i\mapsto F(c_1,\ldots, x_i,\ldots,c_n)\colon \C_i\to \C$ preserves $I$-colimits for  $c_j$ in $\C_j$ and $i,j\leq n$. 
\end{lemma}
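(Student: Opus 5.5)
Both directions rest on how colimits in a product category $\C_1\times\cdots\times\C_n$ are computed, namely componentwise, together with the finality of diagonals for sifted $I$. Here ``$F$ preserves $I$-colimits'' means: for every diagram $D\colon I\to \prod_j\C_j$ with a colimit cocone, the image cocone under $F$ is a colimit in $\C$.

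\emph{Necessity.} Fix objects $c_j\in\C_j$ for $j\neq i$ and a diagram $D_i\colon I\to\C_i$ with colimit cocone $(\lambda_x\colon D_i x\to L)_x$. Form the diagram $D\colon I\to\prod_j\C_j$ that is $D_i$ in the $i$-th coordinate and constant at $c_j$ in the coordinates $j\neq i$. Its colimit is computed componentwise. In the constant coordinates, the constant cocone at $c_j$ is a colimit because a sifted category is connected: non-emptiness and finality of $\Delta$ give connectedness, since $\Delta$ final forces $(x,y)\downarrow\Delta$ to be non-empty and connected, so any two objects are joined by a zigzag. Applying $F$ to this colimit cocone yields exactly the cocone $F(c_1,\ldots,\lambda_x,\ldots,c_n)$, so it is a colimit. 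Hence $F$ preserves $I$-colimits in each variable separately.

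\emph{Sufficiency.} By induction it suffices to treat $n=2$, since a functor $\C_1\times\cdots\times\C_n\to\C$ can be viewed as $(\C_1\times\cdots\times\C_{n-1})\times\C_n\to\C$. In the inductive step, separate preservation in the first $n-1$ variables gives joint preservation in them by the induction hypothesis, and the case $n=2$ then finishes.

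For $n=2$, let $D=(D_1,D_2)\colon I\to\C_1\times\C_2$ have colimit $(L_1,L_2)$ with cocones $\lambda^1,\lambda^2$. Consider the bidiagram $H\colon I\times I\to \C$, $(x,y)\mapsto F(D_1x,D_2y)$. For fixed $x$, preservation in the second variable shows that $F(D_1x,L_2)$ is the colimit over $y$. Then preservation in the first variable, applied naturally in this colimit, shows that $F(L_1,L_2)$ with legs $F(\lambda^1_x,\lambda^2_y)$ is the colimit of $H$ over $I\times I$; this is the standard iterated-colimit (Fubini) argument. Since $\Delta\colon I\to I\times I$ is final, the footnoted equivalent characterization of finality says that restriction of cocones $\mathrm{Cocone}(H)\to\mathrm{Cocone}(H\Delta)$ is an isomorphism. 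It therefore carries colimit cocones to colimit cocones. Thus $F(L_1,L_2)$ with legs $F(\lambda^1_x,\lambda^2_x)$ is a colimit of $H\Delta=FD$, which is what we need.

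The main point needing care is the Fubini step. I must check that the cocone $(F(\lambda^1_x,\lambda^2_y))_{x,y}$ is universal over $I\times I$, and not merely that the iterated colimits exist abstractly. To do this, I take an arbitrary cocone $\gamma_{x,y}\colon F(D_1x,D_2y)\to Z$. For each $x$, it factors uniquely through $F(D_1x,L_2)$ by the universality in the second variable. These factorizations form a cocone over $x$ by uniqueness, so they factor uniquely through $F(L_1,L_2)$ by the universality in the first variable. Uniqueness of the resulting map follows from the two uniqueness clauses.
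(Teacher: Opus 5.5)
Your proposal is correct and follows essentially the same route as the paper. Necessity comes from connectedness of sifted $I$ together with the componentwise computation of colimits in products. For sufficiency you reduce to $n=2$ and combine a Fubini argument with finality of $\Delta\colon I\to I\times I$; your explicit check that the canonical cocone $(F(\lambda^1_x,\lambda^2_y))_{x,y}$ is universal makes precise what the paper's chain of isomorphisms leaves implicit. You never state the base case $n=0$ (a constant functor $1\to\C$, which the paper needs for constant functor symbols), but it follows at once from the connectedness you already proved.
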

\begin{proof}
   Clearly, the claim holds for $n = 0,1$. The constant functors $1\to \C_i$ preserve connected colimits and, therefore, so do the functors $\C_i\to \C_1\times \ldots \times \C_n,x\mapsto (c_1,\ldots, x,\ldots, c_n)$. Thus, if $F$ preserves $I$-colimits, then so do the functors $F\circ (c_1,\ldots, -,\ldots, c_n)$. Let us then assume that $F$ preserves $I$-colimits componentwise. We show that $F$ preserves $I$-indexed colimits. It suffices to prove the claim assuming $n = 2$, since the induction step is then easy. Consider $D = (D_1,D_2)\colon I\to \C_1\times \C_2$ with a colimit. Now 
   \begin{align*}
       F(\colim_i (D_1(i), D_2(i)))
       &\cong F(\colim_{i,j}(D_1(i),D_2(j)))\tag{$\Delta$ is final}\\
       &\cong F(\colim_i \colim_j(D_1(i),D_2(j)))\tag{Fubini}\\
       &\cong \colim_i F(\colim_j(D_1(i),D_2(j)))\tag{Preservation componentwise}\\
       &\cong \colim_i\colim_j F(D_1(i),D_2(j))\tag{Preservation componentwise}\\
       &\cong \colim_{i,j} F(D_1(i),D_2(j))\tag{Fubini}\\
       &\cong \colim_i F(D_1(i),D_2(i)).\tag{$\Delta$ is final}
   \end{align*}
\end{proof}

The following corollary highlights the ubiquity of sifted colimits and cosifted limits in practice. Whenever a monoidal category possesses reflexive coequalizers that are preserved componentwise by its tensor product, any category of internal bialgebras constructed over it strictly inherits these reflexive coequalizers.

\begin{corollary}\label{cor:liftingSifted}
   Let $(\Sigma, \sigma)$ be a signature pair with $\sigma$-theory $T$. Let $M$ be a categorical $\Sigma$-model and $I$ be a sifted category. If $M(B)$ preserves $I$-colimits componentwise for each functor symbol $B$ in $\Sigma$, then the forgetful functor $U\colon M^T\to M_0$ strictly creates all $I$-indexed colimits. 
\end{corollary}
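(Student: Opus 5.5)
The plan is to reduce the statement to Corollary~\ref{cor:limitPreservationFromAtomicPreservation}, taking $\K=\CAT$, $K=1$ the terminal category and $k\colon I\to 1$ the unique functor. In $\CAT$, a left Kan extension of $d\colon I\to \C$ along $I\to 1$ is exactly a colimit of the diagram $d$. A functor preserves such left Kan extensions precisely when it preserves $I$-colimits, and strict creation in the Kan-extension sense is strict creation of $I$-indexed colimits. So it suffices to show that for every functor symbol $B\colon c_1\cdots c_n\to d$ of $\Sigma$, the functor $M(B)\colon M_{c_1}\times\cdots\times M_{c_n}\to M_d$ preserves all left Kan extensions along $k$, that is, all $I$-indexed colimits.

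This is where siftedness enters. By hypothesis $M(B)$ preserves $I$-colimits componentwise, so Lemma~\ref{lem:SiftedTotalityFromComponents} shows that $M(B)$ preserves $I$-colimits jointly. Two degenerate cases need a brief check.

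\begin{itemize}
\item If $B$ is a constant ($n=0$), then $M(B)\colon 1\to M_d$. It preserves $I$-colimits because $I$ is connected (sifted implies non-empty and connected): the colimit of a constant diagram over a connected category is the object itself with identity coprojections.
\item Colimits in a product of categories are computed componentwise. This is consistent with Theorem~\ref{thm:pie_kan_creation}(1) and is needed to identify colimits in $M_{c_1}\times\cdots\times M_{c_n}$ as used in the lemma.
\end{itemize}

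With the hypothesis of Corollary~\ref{cor:limitPreservationFromAtomicPreservation} verified for $k\colon I\to 1$, that corollary shows that every term functor $M(t)$ preserves $I$-colimits. Its final clause then gives that $U = u\colon M^T\to M_0$ strictly creates all left Kan extensions along $k$, i.e.\ all $I$-indexed colimits.

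The main obstacle is the translation between Kan-extension language and colimit language, in particular for the equifier step. Corollary~\ref{cor:LimitsLiftedToBialgebras}(1) only requires that quotient initiality be preserved, and full preservation implies this, so that step is harmless. More care is needed to confirm that "$F$ preserves the initial object of $\LE(d,k)$" coincides with ordinary preservation of the colimit of $d$, including when that colimit fails to exist. Strict creation here is a statement about colimits that exist in $M_0$, so the corollary is applied only to diagrams $Ud$ whose colimit exists in $M_0$. I would state this identification explicitly as a remark before invoking the corollaries.
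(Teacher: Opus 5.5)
Your proposal is correct and is essentially the paper's own proof. The paper's argument is the one-liner ``Lemma~\ref{lem:SiftedTotalityFromComponents} plus Corollary~\ref{cor:limitPreservationFromAtomicPreservation},'' applied with $k\colon I\to 1$ so that left Kan extensions along $k$ are exactly $I$-colimits. Your extra remarks usefully make explicit what the paper leaves implicit: the constant case $n=0$ via connectedness of $I$, componentwise colimits in products, and full preservation implying preservation of quotient initiality.
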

\begin{proof}
   Follows immediately from Lemma~\ref{lem:SiftedTotalityFromComponents} and Corollary~\ref{cor:limitPreservationFromAtomicPreservation}.
\end{proof}
In the following, we list some famous results that follow from our framework.
\begin{corollary}
   Let $T$ be a monad on a category $\C$. Then the forgetful functor $\C^T\to \C$ from the Eilenberg-Moore category $\C^T$ of $T$ strictly creates limits and those colimits that $T$ preserves and whose quotient initiality $T^2$ preserves.
\end{corollary}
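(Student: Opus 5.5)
We realize the Eilenberg–Moore category $\C^T$ as $M^{T'}$ for a suitable signature pair and theory, and then read off the claim from Corollary~\ref{cor:LimitsLiftedToBialgebras}.

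\textbf{Setup.} Take the $2$-signature $\Sigma_{\text{Mnd}}$ of Examples~\ref{ex:$2$-signatures}, with category symbol $c$, functor symbol $T$, and transformation symbols $\mu^x, \eta^x$. Take the $\Sigma_{\text{Mnd}}$-model $M$ in $\CAT$ given by $M_c = \C$, $M(T) = T$, $M(\mu) = \mu$ and $M(\eta) = \eta$. The $1$-signature is $\sigma = (s\colon c,\ a\colon T(s)\to s)$. The theory $T'$ is the lax-limit theory of Examples~\ref{exs:theories}(4), which consists of two equations:
\begin{itemize}
\item $a\circ \eta_s = \id_s\colon s\to s$, where the right-hand side is the empty path;
\item $a\circ \mu_s = a\circ T(a)\colon T(T(s))\to s$.
\end{itemize}
Unwinding the inserter and the equifier as in the monoid example, $M^\sigma$ is the category of pairs $(x, \alpha\colon Tx\to x)$ with morphisms commuting with the structure maps. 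The subcategory $M^{T'}$ is the full subcategory cut out by the unit and associativity laws, which is exactly $\C^T$. The canonical morphism $u\colon M^{T'}\to M_0 = \C$ is the forgetful functor.

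\textbf{Limits.} $T'$ is algebraic, since every coarity term is the sort $s$. Corollary~\ref{cor:LimitsLiftedToBialgebras}(3) then says that $u$ strictly creates all right Kan extensions. Ordinary limits are right Kan extensions along $I\to 1$, so $u$ strictly creates all limits.

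\textbf{Colimits.} Apply Corollary~\ref{cor:LimitsLiftedToBialgebras}(1). The only function arity support is $M(T(s)) = T$. The equational arity supports are $M(s) = \id_\C$ and $M(T(T(s))) = T^2$. The identity preserves everything, in particular quotient initiality. So $u$ strictly creates those left Kan extensions along $I\to 1$ that $T$ preserves and whose quotient initiality $T^2$ preserves, which is the stated condition.

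\textbf{Main obstacle.} The one point needing care is matching the hypotheses exactly. Corollary~\ref{cor:LimitsLiftedToBialgebras}(1) asks $T^2 = M(T(T(s)))$ to preserve quotient initiality of the extension, computed on $M^\sigma$. This is in fact the composite $M^\sigma\to \C\xrightarrow{T^2}\C$, matching Theorem~\ref{thm:pie_kan_creation}(3) applied with $f = (\id, T^2)$. Since $u_M$ strictly creates the extension, the colimit cocone in $M^\sigma$ maps to the colimit cocone in $\C$. Hence preservation of quotient initiality by this composite reduces to preservation by $T^2$ of quotient initiality of the colimit cocone in $\C$, as the statement requires. I would verify this step explicitly; everything else is bookkeeping.
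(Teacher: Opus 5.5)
Your proposal is correct and is the derivation the paper intends. The paper gives no separate proof; it lists the result as a consequence of Corollary~\ref{cor:LimitsLiftedToBialgebras}, applied to $\Sigma_{\text{Mnd}}$ with the monad-algebra theory of Examples~\ref{exs:theories}(4), where $T$ is the only function arity support and $\id_\C$, $T^2$ are the equational arity supports. Your reduction of the quotient-initiality condition for $T^2u_M$ on $M^\sigma$ to one for $T^2$ in $\C$ is the only non-bookkeeping step, and it holds because $u_M$ sends the created extension back to the original colimit cocone, and componentwise quotient initiality implies quotient initiality in the product.
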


\begin{corollary}
   Let $F\colon \C\to \E$ and $G\colon \D\to \E$ be functors. Then the forgetful functor $U\colon F\downarrow G\to \C\times \D$ strictly creates those colimits and limits that $F$ and $G$ preserve, respectively.
\end{corollary}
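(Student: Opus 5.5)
The plan is to realise the comma category $F\downarrow G$ as an object of the form $M^\sigma$ for a suitable signature pair, and then read off the claim from Corollary~\ref{cor:LimitsLiftedToBialgebras} with the empty theory $T=\emptyset$. In that case $M^T=M^\sigma$, because the equifier of an empty family is the identity. Alternatively, one can apply Theorem~\ref{thm:pie_kan_creation} directly to the inserter presentation. Both routes use the same ingredients, and I will follow the inserter presentation, since it is transparent.

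\textbf{Step 1: the presentation.} The comma category is the inserter
$$F\downarrow G=\Ins(F\pi_1,\,G\pi_2)$$
of the two functors $F\pi_1,G\pi_2\colon \C\times\D\to\E$. Its objects are triples $(c,d,\phi\colon Fc\to Gd)$, and the forgetful functor $U$ is the canonical morphism $u$ of the inserter. In signature terms, take $\Sigma$ with category symbols $c_1,c_2,e$, functor symbols $F\colon c_1\to e$ and $G\colon c_2\to e$, and no transformation symbols. Take $\sigma$ with sorts $s_1\colon c_1$, $s_2\colon c_2$ and one function symbol $f\colon F(s_1)\to G(s_2)$. Then $M_0=\C\times\D$ and $M^\sigma=\Ins(M(F(s_1)),M(G(s_2)))=F\downarrow G$.

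\textbf{Step 2: identify limits and colimits.} Colimits and limits of a diagram $d\colon I\to\C\times\D$ are left and right Kan extensions along $k\colon I\to 1$. The phrase ``those colimits that $F$ preserves'' should be read as those colimits in $\C\times\D$ whose $\C$-component is preserved by $F$. By Theorem~\ref{thm:pie_kan_creation}(1), the projection $\pi_1$ preserves all colimits. Hence $F\pi_1$ preserves a colimit in $\C\times\D$ exactly when $F$ preserves its first component. The analogous statement holds for limits, with $G\pi_2$ and the second component.

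\textbf{Step 3: apply creation.} By Theorem~\ref{thm:pie_kan_creation}(2), $u$ strictly creates the left Kan extensions that $F\pi_1$ preserves and the right Kan extensions that $G\pi_2$ preserves. By Step 2, these are exactly the colimits preserved by $F$ and the limits preserved by $G$. Equivalently, this is Corollary~\ref{cor:LimitsLiftedToBialgebras}(1),(2): the function arity support is $M(F(s_1))=F\pi_1$, the function coarity support is $G\pi_2$, and there are no equational supports, so the quotient-initiality and subterminality conditions are vacuous.

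\textbf{Main obstacle.} The only delicate point is the bookkeeping in Step 2. One must match a Kan extension in $\C\times\D$ along $I\to 1$ with a pair of colimits, and check that ``$F\pi_1$ preserves it'' means precisely that $F$ preserves the $\C$-part. This follows from Theorem~\ref{thm:pie_kan_creation}(1), applied once to the product $\C\times\D$, which gives the preservation by $\pi_1$ and $\pi_2$, and once to the codomain, if needed. Beyond this, the argument is a direct substitution into the earlier results.
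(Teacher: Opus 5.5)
Your proposal is correct and matches the argument the paper intends; the paper lists this corollary without proof as a direct consequence of its framework. The argument presents $F\downarrow G$ as the inserter $\Ins(F\pi_1,G\pi_2)$, equivalently $M^\sigma$ for the evident signature pair with empty theory, and applies Theorem~\ref{thm:pie_kan_creation}(1),(2), i.e.\ Corollary~\ref{cor:LimitsLiftedToBialgebras}(1),(2); your reading of ``colimits that $F$ preserves'' as colimits in $\C\times\D$ whose $\C$-component $F$ preserves, justified by preservation along the projections, is exactly the bookkeeping required.
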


\begin{corollary}
   Let $c$ be an object of a category $\C$. Then the forgetful functor $U\colon \C/c\to \C$ from the slice category over $c$ strictly creates all colimits and connected limits.
\end{corollary}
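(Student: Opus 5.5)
The plan is to present the slice category as the object of bialgebras for a suitable signature pair and then apply Corollary~\ref{cor:LimitsLiftedToBialgebras}. Take the $2$-signature $\Sigma$ with one category symbol $c$, one constant functor symbol $C\colon ()\to c$, and no transformation symbols. The categorical $\Sigma$-model $M$ has $M_c=\C$ and $M(C)$ picking out the object $c$. Take the $1$-signature $\sigma=(s\colon c,\ f\colon s\to C)$ and the empty theory $T=\emptyset$. Then $M_0=\C$, and $M^\sigma=\Ins(\id_\C,\Delta_c)$, where $\Delta_c\colon\C\to\C$ is the constant functor at $c$. Objects of this inserter are pairs $(x, x\to c)$, and morphisms are maps $x\to y$ making the triangle over $c$ commute, because $\Delta_c$ sends every morphism to $\id_c$. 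So $M^T=M^\sigma=\C/c$, and $u$ is the usual forgetful functor $U$.

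Since $T$ is empty, the equifier step is trivial, and we can apply Theorem~\ref{thm:pie_kan_creation}(2) directly. By that result, $U$ strictly creates the left Kan extensions (in particular the colimits, taking $K=1$) that the function arity support $M(s)=\id_\C$ preserves. The identity preserves everything, so $U$ strictly creates all colimits. Likewise, $U$ strictly creates the right Kan extensions (limits) that the function coarity support $\Delta_c$ preserves. The alternative route through Corollary~\ref{cor:LimitsLiftedToBialgebras}(1)/(2) gives the same conclusion.

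The one real verification, and the step needing care, is to check when $\Delta_c$ preserves a limit of a diagram $D\colon I\to\C$. Take the limit cone $\lim D\to D$ and apply $\Delta_c$: this gives the constant cone with vertex $c$ and all legs $\id_c$, sitting over the constant diagram at $c$. This cone is a limit exactly when $I$ is connected, since the limit of a constant $I$-diagram at $c$ is the power of $c$ indexed by $\pi_0(I)$, and the diagonal $c\to c^{\pi_0(I)}$ is an isomorphism when $\pi_0(I)$ is a singleton. So $\Delta_c$ preserves all connected limits, and $U$ strictly creates all connected limits.

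Strictly speaking, the phrase ``preserves the right Kan extension'' in Theorem~\ref{thm:pie_kan_creation} refers to whatever limit exists. So I would state the result for connected limits of $\C$ that exist; they are then strictly created in $\C/c$. I also need to confirm that the identification of $\Ins(\id,\Delta_c)$ with $\C/c$ is an isomorphism of categories over $\C$ rather than merely an equivalence, because strict creation must transfer along it. The explicit description of inserters in $\CAT$ gives this isomorphism on the nose.
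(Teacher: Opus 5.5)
Your proof is correct and follows the paper's own route: you realize $\C/c$ on the nose as $\Ins(\id_\C,\Delta_c)$ with the empty equifier step, which the paper phrases as the theory being coalgebraic. Colimits are then created because the arity support is the identity, and connected limits because the coarity support is constant at $c$. One small overstatement: the constant cone at $c$ is a limit \emph{if} $I$ is connected but not \emph{only if} (for instance when $c$ is terminal); only the ``if'' direction is used, so nothing breaks.
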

\begin{proof}
   The theory for $C/c$ is coalgebraic, which yields the creation of colimits. Since the constant functor $c\colon 1\to C$ preserves all connected limits, $\C/c\to \C$ strictly creates connected limits. 
\end{proof}
\begin{corollary}
    The category $\mathbf{Field}$ of fields has connected limits and sifted colimits.
\end{corollary}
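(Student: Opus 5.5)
The plan is to realize $\mathbf{Field}$ as $M^T$ for the signature pair $(\Sigma,\sigma)$ and theory $T$ of Examples~\ref{exs:theories}(3), with $M=(\Set,\times,1,(-)+1)$. Connected limits will come from Corollary~\ref{cor:LimitsLiftedToBialgebras}(2) and sifted colimits from Corollary~\ref{cor:liftingSifted}. Since $\sigma$ has a single sort, $M_0=\Set$. We use throughout that $\Set$ is complete and cocomplete, so every limit and colimit exists in $M_0$ and only the creation hypotheses need checking.

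\textbf{Sifted colimits.} Apply Corollary~\ref{cor:liftingSifted} with $M$ the categorical $\Sigma$-model above. Its functor symbols are $\square$, $I$ and $S$.
\begin{itemize}
\item $M(\square)=\times$ preserves all colimits in each variable separately, because $A\times(-)$ is a left adjoint in $\Set$.
\item $M(I)$ is a constant functor $1\to\Set$ on the terminal category, so it preserves sifted colimits, these being connected.
\item $M(S)=(-)+1$ preserves connected colimits, since a constant summand is absorbed by any connected colimit. Sifted categories are connected, being non-empty with final diagonal.
\end{itemize}
Hence $U\colon M^T\to\Set$ strictly creates sifted colimits, and so $\mathbf{Field}$ has them.

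\textbf{Connected limits.} The aim is to apply Corollary~\ref{cor:limitPreservationFromAtomicPreservation} in its dual form for right Kan extensions: if every $M(B)$ preserves the relevant limits, then every $M(t)$ does, and $u$ creates them. With $k\colon I\to 1$ and $I$ connected, the checks are these.
\begin{itemize}
\item Products preserve connected limits jointly, so $\times$ preserves them. (Only connectedness is needed here, not siftedness.)
\item The constant $1$ preserves connected limits.
\item $(-)+1$ preserves connected limits in $\Set$. This holds because $\Set/1$ and coproduct with $1$ commute with connected limits: a compatible family in $\lim(D_i+1)$ either stays in the added point at every index, or lies in the $D_i$ at every index, by connectedness and naturality of the summand inclusions.
\end{itemize}
By the dual of Corollary~\ref{cor:limitPreservationFromAtomicPreservation}, all supports $M(t)$ preserve connected limits. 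Corollary~\ref{cor:LimitsLiftedToBialgebras}(2) then requires function coarity supports to preserve them, which is already shown. It also requires equational coarity supports to preserve subterminality, which is implied by preservation of the terminal object in $\RE$, i.e.\ of the right Kan extension. So $U$ strictly creates connected limits.

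\textbf{Main obstacle.} The delicate step is $(-)+1$ commuting with connected limits, and the observation that a nonempty connected indexing diagram forces each limit element to lie uniformly in one summand. Empty limits, i.e.\ the terminal object, are excluded precisely because $1+1\neq 1$, which is why only connected limits are claimed.
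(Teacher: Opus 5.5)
Your proof is correct and follows the paper's route. Like the paper, you check that each $M(B)\in\{\times, 1, (-)+1\}$ preserves connected limits and, componentwise, sifted colimits, then invoke Corollary~\ref{cor:limitPreservationFromAtomicPreservation} and its dual (packaged for colimits as Corollary~\ref{cor:liftingSifted}) and transfer along $M^T\simeq\mathbf{Field}$. Your claim that $(-)+1$ preserves connected rather than all colimits is in fact the accurate one: the paper's parenthetical that it preserves all colimits fails already for the initial object, though only sifted colimits are needed.
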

\begin{proof}
    Consider the signature pair $(\Sigma,\sigma)$ with the theory of fields $T$ and the categorical $\Sigma$-model $M = (\Set, \times, 1, (-)+1)$ from Example~\ref{exs:theories}(3). Note that $M(B)$ preserves connected limits (the product $\times\colon \Set^2\to \Set$ and the constant $1$ are right adjoints, and $x\mapsto x+1\colon \Set\to \Set$ preserves connected limits) and sifted colimits (the product is a left adjoint componentwise and hence preserves sifted colimits by Lemma~\ref{lem:SiftedTotalityFromComponents}, the constant preserves connected colimits, and the functor $(-)+1$ preserves all colimits) for each $\Sigma$-functor symbol $B$. Thus, the equivalence $M^T\simeq \mathbf{Field}$ together with Corollary~\ref{cor:limitPreservationFromAtomicPreservation} establishes the claim.
\end{proof}

\subsection{Accessibility and Local Presentability of Bialgebras}
Let us fix a signature pair $(\Sigma,\sigma)$, a $\sigma$-theory $T$, and a locally presentable symmetric monoidal closed category $\V$. We denote by $\ACC_\V$ the $2$-category of large $\V$-accessible categories, $\V$-accessible functors and $\V$-natural transformations \cite{LACK2023107196}. We establish one of our main results, showing how accessibility and local presentability lift to the object of $T$-bialgebras. By working directly in the enriched setting, we obtain the ordinary categorical results as a simple corollary.

\begin{theorem}[$\V$-Enriched Accessibility of $M^T$]\label{thm:enriched_accessability_lifting}
    Let $M$ be a $\Sigma$-model in the $2$-category $\CAT_{\V}$ of $\V$-enriched categories. If each $\V$-category $M_c$ is $\V$-accessible and each $\V$-functor $M(B)$ is $\V$-accessible, then the $\V$-category $M^T$ of $T$-bialgebras is $\V$-accessible and the forgetful $\V$-functor $U\colon M^T\to M_0$ is $\V$-accessible.
\end{theorem}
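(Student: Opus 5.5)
The plan is to reduce everything to the Lack--Tendas closure result \cite[Theorem~5.5.]{LACK2023107196}: the $2$-category $\ACC_\V$ is closed in $\V$-$\CAT$ under flexible $2$-limits. In particular, the limit of a diagram in $\ACC_\V$, computed in $\V$-$\CAT$, lies again in $\ACC_\V$, and its limit projections are $\V$-accessible. Since PIE limits are flexible \cite{PowerPIE1991}, it suffices to show that $M^T$ is built from the $M_c$ by a finite tower of strict PIE limits of diagrams that live entirely in $\ACC_\V$. That is, every object in the diagram must be $\V$-accessible, and every $1$-cell in it must be a $\V$-accessible functor. The $2$-cells are automatically $\V$-natural transformations, since $\ACC_\V$ is locally full on $2$-cells.

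The construction proceeds in four steps. First, $M_0=\prod_{s\in S}M_{c_s}$ and $\prod_f M_{c_f}$, $\prod_{(p,q)}M_c$ are small products of $\V$-accessible categories, hence lie in $\ACC_\V$ with accessible projections. Second, I show by structural induction on $t\in\Term_S^\Sigma$ that each $M(t)\colon M_0\to M_c$ is $\V$-accessible. For the base case, $M(s)=\pi_s$ is a projection, which is accessible by the first step. For the inductive step, $M(B(t_1,\dots,t_n))=M(B)\circ(M(t_1),\dots,M(t_n))$. Here the pairing $(M(t_i))_i$ is accessible, because by the universal property of the product in $\ACC_\V$ a family of accessible functors induces an accessible functor into the product. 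Composites of accessible functors are accessible, provided one uses the version of $\ACC_\V$ in which composition is defined; Lack--Tendas' $\ACC_\V$ is a genuine $2$-category, so this holds. Consequently $L_1=(M(t_1))_f$ and $L_2=(M(t_2))_f\colon M_0\to\prod_f M_{c_f}$ are accessible. Third, $M^\sigma=\Ins(L_1,L_2)$ is an inserter of a pair in $\ACC_\V$, so $M^\sigma$ is $\V$-accessible and $u_M$ is accessible. Fourth, the two parallel functors $(M^\sigma(t_i))=(M(t_i))\circ u_M$ indexed by $T$ are accessible by the second and third steps. The $2$-cells $(\bar p_M)$ and $(\bar q_M)$ are $\V$-natural by construction, since they are built from $\phi_M$, the $M(\alpha)$, and whiskerings. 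Hence the equifier $M^T$ is $\V$-accessible and $e_M$ is accessible. Finally, $U=u_M e_M$ is a composite of accessible functors.

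The main obstacle is making sure the PIE limits taken in $\CAT_\V$ are exactly the strict limits whose closure Lack--Tendas prove. The paper's $\CAT_\V$, $\V$-$\CAT$, and $\ACC_\V$ must match their conventions, with the size issue of large versus small $\V$-categories handled. In addition, the cited theorem must provide not only that the limit object is accessible but also that the limit projections are accessible functors; I would check this in the statement of \cite[Theorem~5.5.]{LACK2023107196}, where closure of the sub-$2$-category under flexible limits means precisely that the limit cone lies in $\ACC_\V$. A secondary point is that the index sets for the products over $S$, over $F$, and over $T$ must be small. For $S$ and $F$ this holds by the definition of signatures. For $T$ I would add the assumption that $T$ is small, or note that one may replace $T$ by a small set defining the same equifier.
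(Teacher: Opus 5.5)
Your proposal is correct and follows the paper's own route. The paper's proof is just the one-line appeal to \cite[Theorem~5.5]{LACK2023107196} that $\ACC_\V$ is closed under strict PIE limits in $\CAT_\V$; your structural induction, showing that each $M(t)$ and hence $L_1$, $L_2$, $u_M$, $e_M$ and $U=u_Me_M$ are $\V$-accessible, simply spells out that the whole construction of $M^T$ stays inside $\ACC_\V$. Your extra smallness hypothesis on $T$ is unnecessary: $G_\sigma$ is generated by finitary recursion from the small sets $S$, $F$, $\mathcal{F}$, $\mathcal{T}$, so its set of parallel paths, and therefore $T$, is automatically small.
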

\begin{proof}
   The $2$-category $\mathbf{ACC}_{\V}$ of $\V$-accessible categories, $\V$-accessible functors, and $\V$-natural transformations is closed under strict PIE limits in $\CAT_{\V}$ by Theorem~5.5 in \cite{LACK2023107196}. Thus, $U\colon M^T\to M_0$ is $\V$-accessible.
\end{proof}

\begin{remark}
   If $\V = \Set$, the functors $M(B)$ in Theorem~\ref{thm:enriched_accessability_lifting} may be equivalently assumed to be accessible componentwise. Componentwise accessibility allows one to extract a uniform regular cardinal $\lambda$ so that $M(B)$ is $\lambda$-accessible in each component for each $\Sigma$-functor symbol $B$ \cite[Example 2.13(6)]{LPC}. As directed colimits are sifted colimits, componentwise preservation of $\lambda$-directed colimits is equivalent to the preservation of $\lambda$-directed colimits by Lemma~\ref{lem:SiftedTotalityFromComponents}.
\end{remark}

\begin{theorem}[Local Presentability of $M^T$]\label{thm:LiftingLP}
   Let $M$ be a $\Sigma$-model in the $2$-category $\CAT_{\V}$ where each $\V$-category $M_c$ is $\V$-locally presentable and the functor $M(B)$ is $\V$-accessible for each functor symbol $B$. Consider the $\V$-functor $U\colon M^T\to M_0$. Then the following conditions hold:
   \begin{enumerate}
       \item If each arity support $M(t) \colon M_0 \to M_c$ is cocontinuous, then $M^T\to M_0$ is comonadic and $M^T$ is locally presentable. In particular, if $T$ is a coalgebraic theory, then $M^T$ is locally presentable, and $U$ is comonadic.
       \item If each coarity support $M(t) \colon M_0 \to M_c$ is continuous, then $M^T\to M_0$ is monadic and $M^T$ is locally presentable. In particular, if $T$ is an algebraic theory, then $M^T$ is locally presentable and $U$ is monadic.
   \end{enumerate}
\end{theorem}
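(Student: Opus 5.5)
We prove (2); (1) is its formal dual, with comonadicity in place of monadicity and colimits in place of limits. The first step is accessibility. By Theorem~\ref{thm:enriched_accessability_lifting}, $M^T$ is $\V$-accessible and $U\colon M^T\to M_0$ is a $\V$-accessible $\V$-functor. Moreover $M_0=\prod_s M_{c_s}$ is $\V$-locally presentable, since locally presentable $\V$-categories are closed under small products. Hence it suffices to show two things:
\begin{itemize}
\item $M^T$ is complete and $U$ strictly creates (weighted) limits;
\item $U$ has a left adjoint.
\end{itemize}
An accessible category with limits is locally presentable, which gives local presentability of $M^T$.

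The second step is limit creation. In the enriched setting, weighted limits are expressed as right Kan extensions in $\CAT_\V$ (\cite{Kelly:Enriched}), so we apply Corollary~\ref{cor:LimitsLiftedToBialgebras}(2). For this we need every function coarity support $M(t_2)$ to preserve these right Kan extensions, and every equational coarity support $M(t_2')$ to preserve their subterminality. Both follow from the hypothesis that every coarity support is continuous, because a continuous functor preserves limiting cones and hence maps the terminal object of $\RE$ to a terminal, in particular subterminal, object. Since $M_0$ is complete, $U$ strictly creates all small (weighted) limits; thus $M^T$ is complete and $U$ is continuous. In the algebraic case the coarity terms are sorts, so the coarity supports are projections, and continuity holds by Theorem~\ref{thm:pie_kan_creation}(1); this is also Corollary~\ref{cor:LimitsLiftedToBialgebras}(3).

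The third step produces the left adjoint and monadicity. $U$ is a continuous accessible $\V$-functor between locally presentable $\V$-categories, where we already know $M^T$ is accessible and complete, hence locally presentable. The enriched adjoint functor theorem for locally presentable $\V$-categories therefore gives a left adjoint. For monadicity, we use Beck's theorem in its strict-creation form: it suffices that $U$ strictly creates coequalizers of $U$-split pairs. Split coequalizers are absolute, so they are preserved by every functor, including all coarity and arity supports. For the inserter stage we then use Theorem~\ref{thm:pie_kan_creation}(2) on the left-Kan side: $f=(M(t_1))_f$ preserves absolute colimits, so $u_M$ creates them. For the equifier stage we use Theorem~\ref{thm:pie_kan_creation}(3): any functor preserves quotient initiality of an absolute colimit cocone, since it preserves the colimit itself. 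Consequently $U$ strictly creates absolute coequalizers, and $U$ is monadic.

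The main obstacle is translating between ``Kan extensions in $\CAT_\V$'' and ``weighted (co)limits'' carefully enough that Corollary~\ref{cor:LimitsLiftedToBialgebras} applies verbatim in the enriched setting. In particular, $U$-split pairs must be handled as colimits of diagrams in $M^T$ whose image in $M_0$ is absolute, not as colimits of diagrams already in $M^T$. The corollary as stated concerns Kan extensions of diagrams $d$ in $M^T$, so the plan is to check that the lifting argument of Theorem~\ref{thm:pie_kan_creation}(2),(3) only uses the Kan extension property downstairs. Failing that, we fall back on the weaker criterion that $U$ creates coequalizers of reflexive pairs with split image, which the same argument handles.
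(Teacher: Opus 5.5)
Your proposal follows the paper's proof step for step: accessibility from Theorem~\ref{thm:enriched_accessability_lifting}, completeness via Corollary~\ref{cor:LimitsLiftedToBialgebras}(2), local presentability from accessible plus complete, a left adjoint from the enriched adjoint functor theorem, and Beck's criterion for monadicity, where your absolute-colimit argument through Theorem~\ref{thm:pie_kan_creation}(2),(3) spells out what the paper leaves to its citation of Dubuc. Your closing worry is unfounded, since Theorem~\ref{thm:pie_kan_creation} already takes the diagram $d$ in the upper object and assumes only that the Kan extension of $ud$ exists downstairs and is preserved by $f$ (for the equifier, that $f$ preserves its quotient initiality), which is exactly the $U$-split situation; the identification of 2-categorical Kan extensions in $\CAT_\V$ with enriched weighted (co)limits is glossed over in the paper in the same way as in your sketch.
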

\begin{proof}
   We prove the monadic case, as the comonadic case follows similarly. Assume each coarity support $M(t) \colon M_0 \to M_c$ is continuous. Since $M^T$ is $\V$-accessible (Theorem~\ref{thm:enriched_accessability_lifting}) and $\V$-complete (Corollary~\ref{cor:LimitsLiftedToBialgebras}(2)), $M^T$ is $\V$-locally presentable by \cite[Corollary 3.21.]{LACK2023107196}. As the functor $U\colon M^T\to M_0$ is accessible (Theorem~\ref{thm:enriched_accessability_lifting}) and continuous between $\V$-locally presentable categories, the Enriched Adjoint Functor Theorem \cite[Theorem 5.32]{Kelly:Enriched} shows that $U$ has a left adjoint. Lastly, as $U$ satisfies the enriched Beck's monadicity criterion (\cite[Theorem II.2.1]{dubuc1970}), $U$ is $\V$-monadic. 
\end{proof}

If $\C$ is a locally presentable symmetric monoidal category, where the tensor is accessible componentwise, then the category of internal bimonoids forms a locally presentable category. This is seen by the fact that the category of bimonoids is the category $\textbf{Comon}(\Mon(\C))$ and that $\Mon(\C)$ is itself a locally presentable (Theorem~\ref{thm:LiftingLP}) symmetric monoidal category with the original tensor, which also preserves large enough directed colimits componentwise (as these are computed in $\C$).

\section{Factorization Systems, Regularity, and Exactness}

To establish how regularity and exactness lift along the construction $M\mapsto M^T$ in $\CAT$, we first study how orthogonal factorization systems lift along $2$-dimensional limits. We then generalize these notions to $2$-categories via the representable $2$-functors $\K(-,c)\colon \K^{op}\to \CAT$.

\subsection{Orthogonal Factorization Systems}
\begin{definition}[Orthogonal Factorization System]\label{def:1cat_ofs}
   Let $\C$ be a category. Two morphisms $e$ and $m$ in $\C$ are said to be \textbf{orthogonal}, denoted $e \perp m$, if for every commutative square 
   $$
\begin{tikzcd}
a \arrow[d, "e"'] \arrow[r, "f"]                              & c \arrow[d, "m"] \\
b \arrow[r, "g"'] \arrow[ru, "\exists!h" description, dotted] & d               
\end{tikzcd}
   $$ there exists a unique morphism $h\colon c\to d$, a \textbf{diagonal filler}, making the diagram commute. We extend the notation $A\perp B$ to include classes of morphisms $A$ and $B$ in $\C$. We denote $A^\perp\coloneqq \{m\in \Mor(\C)\mid a\perp m\text{ for all $a\in A$}\}$ and similarly for $^\perp\! A$ and call the classes the \textbf{right} and the \textbf{left orthogonal classes} of $A$, respectively.
   
   An \textbf{orthogonal factorization system (OFS)} on $\C$ is a pair $(E, M)$ of classes of morphisms in $\C$ such that:
   \begin{enumerate}
       \item \textbf{Factorization:} Every morphism $f$ in $\C$ admits a factorization $f = m \circ e$ with $e \in E$ and $m \in M$. Such a factorization is called an $(E,M)$-factorization.
       \item \textbf{Orthogonality:} $M = E^\perp$ and $E =\! ^\perp\! M$
   \end{enumerate}
   An OFS $(E,M)$ is called \textbf{stable} if every pullback of any morphism in $E$ is in $E$. 
\end{definition}

\begin{remark}
   An orthogonal factorization system can be equivalently described as a pair $(E, M)$ of classes of morphisms of $\C$ such that every morphism in $\C$ admits an $(E, M)$-factorization, $E \perp M$, and both $E$ and $M$ contain all isomorphisms and are closed under composition and retracts in the arrow category $\C\downarrow \C$ \cite[Lemma 11.2.3]{Riehl2014CategoricalHomotopyTheory}. In practice, these closure properties are useful when verifying that a lifted pair of classes forms an orthogonal factorization system. 
\end{remark}

We now show how these factorization systems strictly lift along PIE limits in $\CAT$.

\begin{theorem}[Lifting OFS Along PIE Limits]\label{thm:pie_ofs_creation}
   Let $\C_i,\C,\D$ be categories with orthogonal factorization systems $(E'_i,M'_i), (E',M'), (E'',M'')$, respectively, for each $i\in I$. Consider functors $F,G\colon \C\to \D$ that preserve the left and right classes, respectively. Then the following hold:
   \begin{enumerate}
       \item $(\prod_iE_i',\prod_i M_i')$ is the unique OFS on $\prod_i \C_i$ preserved by the projections.
       \item $(U^{-1}(E'),U^{-1}(M'))$ is the unique OFS on $\Ins(F,G)$ preserved by the forgetful functor $U\colon \Ins(F,G)\to \C$.
       \item Let $\eta,\theta\colon F\Rightarrow G$ be natural transformations. Then the equifier $\Eq(\eta,\theta)$ has a unique OFS $(U^{-1}(E'), U^{-1}(M'))$ that is preserved by the forgetful functor $U\colon \Eq(\eta,\theta)\to \C$.
   \end{enumerate}
\end{theorem}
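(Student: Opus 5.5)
The plan is to use the characterization in the Remark after Definition~\ref{def:1cat_ofs}. A pair $(E,M)$ is an OFS as soon as every morphism has an $(E,M)$-factorization, $E\perp M$, and both classes contain the isomorphisms and are closed under composition and retracts in the arrow category. In each of the three cases the candidate classes are preimages of OFS classes under a functor: $U$, or the projections. Preimages automatically contain all isomorphisms and are closed under composition and retracts, because functors preserve all three. So the real work is the factorization and the orthogonality.

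\textbf{Uniqueness.} Uniqueness will be handled once, uniformly. Suppose $(E_2,M_2)$ is an OFS preserved by $U$ (or by the projections). Then $E_2\subseteq U^{-1}(E')$ and $M_2\subseteq U^{-1}(M')$. Hence $U^{-1}(E') = {}^\perp U^{-1}(M')\subseteq {}^\perp M_2 = E_2$, and dually for the right classes, so the two systems coincide.

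\textbf{Products.} Part (1) is immediate. Factorizations and diagonal fillers are computed componentwise, and a family of fillers is unique because each component is.

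\textbf{Inserters.} For part (2) I expect the main obstacle to be the factorization. Let $h\colon (c,\alpha)\to(d,\beta)$ be a morphism of $\Ins(F,G)$, and factor $h=me$ in $\C$ with $e\colon c\to x$ in $E'$ and $m\colon x\to d$ in $M'$. We must equip $x$ with a structure map $\gamma\colon Fx\to Gx$. Consider the square in $\D$ with left side $Fe\in E''$, right side $Gm\in M''$, top $Ge\circ\alpha$, and bottom $\beta\circ Fm$. It commutes, since $Gm\circ Ge\circ\alpha = Gh\circ\alpha=\beta\circ Fh=\beta\circ Fm\circ Fe$. Take $\gamma$ to be its unique diagonal filler. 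The two triangle equations say exactly that $e\colon(c,\alpha)\to(x,\gamma)$ and $m\colon(x,\gamma)\to(d,\beta)$ are morphisms of the inserter.

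\textbf{Orthogonality in the inserter.} Take a commutative square in $\Ins(F,G)$ with left side $e$ and right side $m$, where $Ue\in E'$ and $Um\in M'$. Since $Ue\perp Um$ in $\C$, it has a unique filler $h$ there. It remains to check that $h$ respects the structure maps. Both $\gamma_{\mathrm{cod}}\circ Fh$ and $Gh\circ\gamma_{\mathrm{dom}}$ are diagonal fillers of one and the same square in $\D$, whose left side is $Fe$ and right side is $Gm$. These two maps are orthogonal because $F$ and $G$ preserve the respective classes. Hence the two composites agree, and $h$ is an inserter morphism. Uniqueness of the filler comes from faithfulness of $U$.

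\textbf{Equifiers.} For part (3) the equifier is a full subcategory, so orthogonality is inherited directly from $\C$. Only the factorization needs attention: factor $h\colon c\to d$ with $c,d\in\Eq(\eta,\theta)$ as $h=me$ through some $x$, and show $x\in\Eq(\eta,\theta)$. By naturality, $\eta_x\circ Fe = Ge\circ\eta_c = Ge\circ\theta_c=\theta_x\circ Fe$ and $Gm\circ\eta_x=\eta_d\circ Fm=\theta_d\circ Fm=Gm\circ\theta_x$. So $\eta_x$ and $\theta_x$ are both diagonal fillers of the square with left side $Fe\in E''$, right side $Gm\in M''$, top $Ge\circ\eta_c$ and bottom $\eta_d\circ Fm$. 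Uniqueness of fillers gives $\eta_x=\theta_x$, so $x$ lies in the equifier. The same diagonal-uniqueness trick thus drives both parts (2) and (3).
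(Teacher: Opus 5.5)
Your proposal is correct and is essentially the paper's proof. You reduce to factorization plus $E\perp M$ via the closure properties of preimage classes. You obtain the structure map $\gamma$, and likewise the equality $\eta_x=\theta_x$, as a unique diagonal filler coming from $F(e)\perp G(m)$, and you check that the inserter filler respects structure by exhibiting two fillers of the same square in $\D$. Your uniform uniqueness argument via antitonicity of orthogonal complements just spells out what the paper does for products and then reuses with ``as in the previous part''.
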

\begin{proof}
\hfill
\begin{enumerate}
   \item The product $\prod_i \C_i$ has an orthogonal factorization system defined by the product $(\prod_iE_i',\prod_i M_i')$. If $(E,M)$ is an OFS on $\prod_i \C_i$ preserved by the projections, we would have that $E\subset \prod_i E_i'$ and $M\subset \prod_i M_i'$ and by orthogonality then $\prod M_i'= M$ and hence $E = \prod_i E_i'$ proving the uniqueness.

   \item Define $E \coloneqq U^{-1}(E')$ and $M \coloneqq U^{-1}(M')$. Note that if $(E,M)$ is an orthogonal factorization system on $\Ins(F,G)$, then its uniqueness is seen as in the previous part. Clearly, both $E$ and $M$ are closed under composition, isomorphisms, and retractions; and so it suffices to show that each morphism in $\Ins(F,G)$ has an $(E,M)$-factorization and $E\perp M$.
   
   Let $f\colon (x,\phi_x\colon Fx\to Gx)\to (z,\phi_z\colon Fz\to Gz)$ be a morphism in $\Ins(F,G)$. Now $f$ factors $f\colon x\xrightarrow{e}y\xrightarrow{m}z$ as a morphism in $\C$ for some $e\in E'$ and $m\in M'$. Using the orthogonality $F(e)\perp G(m)$, we define the morphism $\phi_y\colon Fy\to Gy$ as the unique morphism making the diagram
   $$
   \begin{tikzcd}[column sep = 2 cm]
F(x) \arrow[d, "\phi_x"'] \arrow[r, "F(e)\in E''"] & F(y) \arrow[d, "\exists!\phi_y" description] \arrow[r, "F(m)"] & F(z) \arrow[d, "\phi_z"] \\
G(x) \arrow[r, "G(e)"']                              & G(y) \arrow[r, "G(m)\in M''"']                                   & G(z)                      
\end{tikzcd}
   $$
   commute. Thus we have a factorization $f = me$ in $\Ins(f,g)$, where $m\in M$ and $e\in E$. Finally, we show $E\perp M$. Consider the following lifting problem:
   $$
   \begin{tikzcd}
(x,\phi_x) \arrow[d, "e\in E"'] \arrow[r, "f"] & (y,\phi_y) \arrow[d, "m\in M"] \\
(v,\phi_v) \arrow[r, "g"']                      & (w,\phi_w)                     
\end{tikzcd}
   $$
   in $\Ins(F,G)$. Note that there is a unique solution $\theta\colon v\to y$ in the underlying category $\C$. This shows the lifting problem has at most one solution. It suffices to show that $\theta$ is a morphism in $\Ins(F,G)$. We show that the diagram
   $$
   \begin{tikzcd}
F(v) \arrow[d, "\phi_v" description] \arrow[r, "F(\theta)"] & F(y) \arrow[d, "\phi_y" description] \\
G(v) \arrow[r, "G(\theta)"']                                  & G(y)                                  
\end{tikzcd}
   $$
   commutes. The orthogonality $F(e)\perp G(m)$ induces a unique morphism $\phi\colon Fv\to Gy$ making the two parts of the whole exterior of the rectangle in the diagram
   $$
\begin{tikzcd}[column sep = 2 cm]
Fx \arrow[d, "\phi_x" description] \arrow[r, "F(e)" description] \arrow[rr, "Ff", bend left] & Fv \arrow[r, "F(\theta)" description] \arrow[d, "\phi_v" description] \arrow[rd, "\exists!\phi" description] \arrow[rr, "F(g)", bend left] & Fy \arrow[r, "F(m)" description] \arrow[d, "\phi_y" description] & Fw \arrow[d, "\phi_w" description] \\
Gx \arrow[r, "G(e)" description] \arrow[rr, "G(f)"', bend right]                                & Gv \arrow[r, "G(\theta)" description] \arrow[rr, "G(g)"', bend right]                                                                         & Gy \arrow[r, "G(m)" description]                                    & Gw                                   
\end{tikzcd}
$$
   commute. As both morphisms $G(\theta)\phi_v$ and $\phi_yF(\theta)$ satisfy the defining condition of $\phi$, it follows that $G(\theta)\phi_v = \phi_y F(\theta)$. Thus $\theta\colon (v,\phi_v)\to (y,\phi_y)$ is a morphism in $\Ins(F,G)$. 
   
   \item Define $E\coloneqq U^{-1}(E')$ and $M = U^{-1}(M')$. If $(E,M)$ is an orthogonal factorization system, then it is clearly the unique one that is preserved by $U$. We show that $(E,M)$ forms an orthogonal factorization system. Let $f\colon x\to z$ be a morphism in $\Eq(\eta,\theta)$ and consider a factorization $f\colon x\xrightarrow{e}y\xrightarrow{m}z$, where $e\in E'$ and $m\in M'$. By the full faithfulness of $U$, it is sufficient to check that $\eta_y = \theta_y$ to show that $(E,M)$ is an orthogonal factorization system on $\Eq(\eta,\theta)$. 
   Consider the following diagram:
   $$
\begin{tikzcd}[column sep = 2cm]
F(x) \arrow[d, "\eta_x = \theta_x" description] \arrow[r, "F(e)\in E''"] & F(y) \arrow[d, "\eta_y"', shift right] \arrow[d, "\theta_y", shift left] \arrow[r, "F(m)"] & F(z) \arrow[d, "\eta_z = \theta_z" description] \\
G(x) \arrow[r, "G(e)"']                                                 & G(y) \arrow[r, "G(m)\in M''"']                                                              & G(z)                                           
\end{tikzcd}
$$
   As $F(e)\perp G(m)$, we have by orthogonality that the connecting morphism is unique. As both of the morphisms $\eta_y$ and $\theta_y$ make both of the squares commute, we have the equality $\eta_y = \theta_y$. This shows that $y$ is an object of $\Eq(\eta,\theta)$. \qedhere
\end{enumerate}
\end{proof}

\begin{definition}
   Let $\C$ be a category. We say that $\C$ is a \textbf{regular} category, if $\C$ has finite limits and a stable OFS $(E,M)$ where $M$ is the class of monomorphisms. A regular category $\C$ is called \textbf{exact}, if each internal equivalence relation $R\rightrightarrows c$ in $\C$ is a kernel pair of some morphism in $\C$. \footnote{A jointly monomorphic pair of morphisms $R\rightrightarrows c$ in a category $\C$ is called an equivalence relation, if the induced subset of $\hom(x,c)\times \hom(x,c)$ is an equivalence relation for each object $x$ in $\C$.}
\end{definition}
In a category $\C$ with pullbacks, we have that $E =\! ^\perp\! M$ and $M = E^\perp$ where $M$ is the class of monomorphisms and $E$ is the class of extremal epimorphisms in $\C$. In a regular category, each extremal epimorphism is a regular epimorphism by Proposition 2.2 (b) in \cite{Kelly1991}. Therefore, in a regular category, the coequalizers of kernel pairs exist.

\begin{corollary}\label{cor:liftingregularexact}
   Let $\C_i, \C,\D$ be categories for $i\in I$. Let $F,G\colon \C\to \D$ be functors preserving regular epis and finite limits, respectively. Then the following holds:
   \begin{enumerate}
       \item If $\C_i$ is a (exact) regular category for each $i\in I$, then so is $\prod_{i\in I} \C_i$.
       \item If $\C$ and $\D$ are regular categories, then so is $\Ins(F,G)$. Furthermore, if $F$ preserves the coequalizers of kernel pairs, and $\C$ and $\D$ are exact, then $\Ins(F,G)$ is exact as well.
       \item If $\C$ and $\D$ are (exact) regular categories and $\eta,\theta\colon F\Rightarrow G\colon \C\to \D$ are parallel natural transformations, then the equifier category $\Eq(\eta,\theta)$ is (exact) regular.
   \end{enumerate}
\end{corollary}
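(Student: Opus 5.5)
The plan is to prove the three parts in parallel, using Theorem~\ref{thm:pie_kan_creation} to lift finite limits and coequalizers, and Theorem~\ref{thm:pie_ofs_creation} to lift the (regular epi, mono) factorization system. In each case write $U$ for the canonical functor: a projection, or the forgetful functor $\Ins(F,G)\to\C$, or $\Eq(\eta,\theta)\to\C$. Finite limits are right Kan extensions along $J\to 1$ for finite $J$, and coequalizers are left Kan extensions.

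\textbf{Finite limits.} For products, Theorem~\ref{thm:pie_kan_creation}(1) gives componentwise limits. For the inserter, $U$ strictly creates the finite limits that $G$ preserves, by Theorem~\ref{thm:pie_kan_creation}(2); by hypothesis $G$ preserves all of them. For the equifier, Theorem~\ref{thm:pie_kan_creation}(3) needs $G$ to preserve subterminality of limit cones. A limit cone is terminal in the relevant extension category, and $G$ preserves it, so subterminality is preserved as well.

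\textbf{Monomorphisms.} In every case $U$ is faithful, so it reflects monos. Since $U$ creates (hence preserves) pullbacks, it preserves kernel pairs, and therefore also preserves monos. It follows that $U^{-1}(\mathrm{Mono})$ is exactly the class of monos.

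\textbf{Regularity.} In a regular category, the pair $(\mathrm{RegEpi},\mathrm{Mono})$ is an OFS. Here $F$ preserves the left class by hypothesis, and $G$ preserves the right class because it preserves finite limits. Theorem~\ref{thm:pie_ofs_creation} (in the equifier case with the same $F,G$) then yields an OFS $(U^{-1}(\mathrm{RegEpi}),\mathrm{Mono})$ on the limit category. The lifted left class is ${}^\perp\mathrm{Mono}$, which is the class of extremal epis. Stability holds because pullbacks are computed by $U$ and $\mathrm{RegEpi}$ is stable in $\C$, so $U^{-1}(\mathrm{RegEpi})$ is pullback-stable. Hence the limit category is regular. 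For products, all of this is componentwise.

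\textbf{Exactness.} This is the main step. Let $R\rightrightarrows c$ be an equivalence relation in $\Ins(F,G)$ or in $\Eq(\eta,\theta)$. The functor $U$ preserves finite limits and joint monicity, so $UR\rightrightarrows Uc$ is an equivalence relation in $\C$. By exactness of $\C$ it is the kernel pair of its coequalizer $q\colon Uc\to Q$.

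\begin{itemize}
\item In the inserter case, $F$ preserves this coequalizer of a kernel pair by hypothesis. Theorem~\ref{thm:pie_kan_creation}(2) then lifts $q$ uniquely to a coequalizer $\tilde q$ in $\Ins(F,G)$.
\item In the equifier case, $F(q)$ is a regular epi, hence epi. So the image cocone is quotient initial, and Theorem~\ref{thm:pie_kan_creation}(3) again lifts $q$ to $\tilde q$.
\end{itemize}

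Finally, the kernel pair of $\tilde q$ is created by $U$, so its image is the kernel pair $UR$ of $q$. Both $R$ and this kernel pair lift $UR$, together with the legs. By the uniqueness part of strict creation of pullbacks, the kernel pair of $\tilde q$ is $R$ itself. I expect this identification to be the delicate point: it requires matching the strict lift, not merely an isomorphic one, and uniqueness of lifts is exactly what supplies that.

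For products, exactness is again componentwise.
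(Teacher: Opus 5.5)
Your proposal is correct and follows the paper's proof essentially step for step. Finite limits come from Theorem~\ref{thm:pie_kan_creation}; the (regular epi, mono) system is lifted via Theorem~\ref{thm:pie_ofs_creation}, with stability from creation of pullbacks; and exactness comes from lifting the coequalizer $q$ of $UR\rightrightarrows Uc$ and then identifying $R$ with the created kernel pair of its lift. Beyond the paper, you spell out the equifier case it leaves as ``similar'': you correctly use that $F(q)$ is epi to get quotient initiality, which explains why part 3 needs no extra hypothesis on $F$. You also make explicit, via faithfulness of $U$ and uniqueness of lifts, that $R$ is a cone over $(\tilde q,\tilde q)$ and hence is the created kernel pair.
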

\begin{proof}
       We only prove the second assertion, as the others are shown similarly. 
       Assume that $\C$ and $\D$ are regular categories. Since $\C$ has, and $G$ preserves, finite limits, it follows that $\Ins(F,G)$ is finitely complete by Theorem~\ref{thm:pie_kan_creation}(2). Consider the classes of morphisms $E = U^{-1}(E')$ and $M = U^{-1}(M')$, where $M'$ and $E'$ are the classes of monomorphisms and extremal epimorphisms in $\C$, respectively. By Theorem~\ref{thm:pie_ofs_creation}(2), $(E,M)$ is an orthogonal factorization system on $\Ins(F,G)$ as $F$ preserves regular epis and $G$ monics. Note that $M$ is the class of monomorphisms of $\Ins(F,G)$ and $E$ is stable as the forgetful functor $U\colon \Ins(F,G)\to \C$ strictly creates limits. Thus $\Ins(F,G)$ is a regular category.

       Assume then that $\C$ and $\D$ are exact and $F$ preserves the coequalizers of kernel pairs. We show that $\Ins(F,G)$ is an exact category. Consider an equivalence relation $R\rightrightarrows x$ in $\Ins(F,G)$. As $U$ preserves limits, it follows that $U(R)\rightrightarrows U(x)$ is an equivalence relation in $\C$. By exactness, we may consider the coequalizer $f\colon Ux\to y$ of the kernel pair $UR\rightrightarrows Ux$. As $F$ preserves the coequalizers of kernel pairs, $F$ preserves the coequalizer of $UR\rightrightarrows Ux$. Thus $U$ strictly creates both the coequalizer of $R\rightrightarrows x$ in $\Ins(F,G)$ and its kernel pair, as $G$ preserves pullbacks. Thus, the equivalence relation $R\rightrightarrows x$ is a kernel pair of $f$ in $\Ins(F,G)$, proving that $\Ins(F,G)$ is exact.
\end{proof}

\subsection{Representable Regularity and Exactness}

One can formulate an orthogonal factorization system on a category $\C$ as a suitable idempotent comonad-monad pair on the arrow category $\C\downarrow\C$, so-called algebraic factorization system \cite{GrandisTholen2006NaturalWFS}. This allows a $2$-categorical formulation of an orthogonal factorization system on an object $c$ in a $2$-category $\K$ with arrow objects. However, as the representable $2$-functors preserve and jointly create the idempotent monad-comonads on arrow objects, we can instead define the notion of an orthogonal factorization system on $c$ representably.

\begin{definition}\label{def:representable_ofs}
   Let $c$ be an object of a $2$-category $\K$. Let $(E_x,M_x)$ be an OFS on the hom-category $\K(x,c)$ for each object $x$ in $\K$. We call the pair $(E,M) = ((E_x)_x,(M_x)_x)$ a \textbf{representable orthogonal factorization system} on $c$, if for every morphism $f\colon x\to y$ in $\K$, the pre-composition functor $f^* \colon \K(y,c) \to \K(x,c), \eta\mapsto \eta*f,$ preserves the left and the right classes.
\end{definition}
We can similarly define the exactness properties of an object representably.

\begin{definition}\label{def:representable_exactness}
   An object $c$ in a $2$-category $\K$ is \textbf{representably regular} if for every object $x$ in $\K$, the hom-category $\K(x, c)$ is a regular category, and for every morphism $f \colon x \to y$, the precomposition functor $f^* \colon \K(y,c) \to\K(x, c)$ preserves finite limits and regular epimorphisms. Furthermore, $c$ is \textbf{representably exact} if it is representably regular and every hom-category of the form $\K(x, c)$ is exact.
\end{definition}
In general, we say that a morphism $f\colon c\to d$ in a $2$-category $\K$ preserves structures or properties (for instance regularity, exactness, factorization systems) \textbf{representably}, if the post composition functor $f_*\colon \K(x,c)\to \K(x,d)$ preserves those properties for each object $x$ in $\K$.

\begin{corollary}\label{cor:pie_ofs_creation_2cat}
       Let $c_i,c,d$ be objects in a $2$-category $\K$ with representable orthogonal factorization systems $(E'_i,M'_i), (E',M'), (E'',M'')$, respectively, for each $i\in I$. Consider morphisms $f,g\colon c\to d$ that representably preserve the left and right classes, respectively. Then the following holds.
   \begin{enumerate}
       \item $(\prod_iE_i',\prod_i M_i') $ is the unique OFS on $\prod_i c_i$, assuming the product exists,  which is preserved by the projections.
       \item Assume that the inserter $\Ins(f,g)$ exists in $\K$. Then $(u_*^{-1}(E'),u_*^{-1}(M'))$ is the unique OFS on $\Ins(f,g)$ preserved by the canonical morphism $u\colon \Ins(f,g)\to c$.
       \item Assume that the equifier $\Eq(\eta,\theta)$ exists. Then it has a unique OFS $(u_*^{-1}(E'), u_*^{-1}(M'))$ that is preserved by the forgetful canonical morphism $u\colon \Eq(\eta,\theta)\to c$.
   \end{enumerate}
\end{corollary}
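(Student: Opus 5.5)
The plan is to reduce everything to Theorem~\ref{thm:pie_ofs_creation} by passing through the representable $2$-functors $\K(x,-)\colon \K\to\CAT$. These send strict PIE limits to the corresponding PIE limits in $\CAT$; this is exactly how the universal properties were defined. Concretely, for each object $x$ we have isomorphisms of categories
\[
\K(x,\textstyle\prod_i c_i)\cong \prod_i \K(x,c_i),\qquad
\K(x,\Ins(f,g))\cong \Ins(f_*,g_*),\qquad
\K(x,\Eq(\eta,\theta))\cong \Eq(\eta_*,\theta_*).
\]
Here $f_*,g_*\colon \K(x,c)\to\K(x,d)$ are postcomposition, $\eta_*,\theta_*$ are the whiskered transformations $\eta*(-),\theta*(-)$, and under these isomorphisms $u_*$ (resp.\ $(\pi_i)_*$) becomes the forgetful functor (resp.\ projection).

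\textbf{Step 1: objectwise OFS.} By hypothesis, $(E'_x,M'_x)$ and $(E''_x,M''_x)$ are OFS on $\K(x,c)$ and $\K(x,d)$, and $f_*$ (resp.\ $g_*$) preserves the left (resp.\ right) classes. Theorem~\ref{thm:pie_ofs_creation}(2) then gives that $(u_*^{-1}(E'_x),u_*^{-1}(M'_x))$ is the unique OFS on $\K(x,\Ins(f,g))$ preserved by $u_*$. Parts (1) and (3) follow in the same way from parts (1) and (3) of that theorem. For (3) no hypothesis on $\eta,\theta$ beyond naturality is needed: $\eta_*$ and $\theta_*$ are natural transformations $f_*\Rightarrow g_*$ by the interchange law.

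\textbf{Step 2: stability under precomposition.} To obtain a representable OFS, we must check that for every $h\colon x\to y$, the functor $h^*\colon \K(y,\Ins(f,g))\to\K(x,\Ins(f,g))$ preserves both classes. Since $u_*h^*=h^*u_*$ (associativity of whiskering, $u*(\alpha*h)=(u*\alpha)*h$), a $2$-cell $\alpha$ with $u*\alpha\in E'_y$ satisfies $u*(\alpha*h)=(u*\alpha)*h\in E'_x$, because $(E',M')$ is representable on $c$. Hence $\alpha*h\in u_*^{-1}(E'_x)$, and the same argument works for $M$ and for the products and equifiers. Uniqueness is inherited objectwise: any representable OFS preserved by $u$ restricts at each $x$ to an OFS preserved by $u_*$, which by Step~1 must be ours.

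The only point needing care, and the one I expect to be the main obstacle, is the identification of the hom-category $\K(x,\Ins(f,g))$ with the $\CAT$-inserter $\Ins(f_*,g_*)$ as categories, not merely on objects. Its objects are pairs $(h',\alpha\colon fh'\Rightarrow gh')$ by the first universal property. Its morphisms are $2$-cells $uh_1\Rightarrow uh_2$ satisfying the square condition, by the second universal property, which gives bijectivity and hence an isomorphism compatible with $u_*$. The analogous statements for products and equifiers are immediate from their definitions, so I would record this as a remark and then cite it in each of the three parts.
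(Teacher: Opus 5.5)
Your proposal is correct and follows essentially the same route as the paper. Both identify $\K(x,-)$ applied to each PIE limit with the corresponding PIE limit in $\CAT$, apply Theorem~\ref{thm:pie_ofs_creation} objectwise, and then check that precomposition $h^*$ preserves the lifted classes; the paper does this last step with the commuting square through $\K(y,\prod_i c_i)\cong\prod_i\K(y,c_i)$, while you use $u_*h^*=h^*u_*$ together with representability of $(E',M')$, and you also spell out the hom-category isomorphism and the representable-level uniqueness that the paper leaves implicit.
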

\begin{proof}
   We only show the first part, as the rest are shown similarly. Assume that the product $\prod_{i\in I} c_i$ exists in $\K$. Let $x$ be an object of $\K$ and note that the representable functor $\K(x,-)\colon \K\to \CAT$ preserves products. Thus we have the projection $\K(x,\pi_i)\colon \K(x,\prod_i c_i)\to \K(x,c_i)$ for $i\in I$. Theorem~\ref{thm:pie_ofs_creation}(1) shows that $\K(x,\prod_i c_i)$ attains the unique orthogonal factorization system as desired. Consider then a morphism $f\colon x\to y$ in $\K$ and the commutative diagram
   $$
   \begin{tikzcd}
{\K(y,\prod_i c_i)} \arrow[d, "f^*"'] \arrow[r, "\cong"] & {\prod_i \K(y,c_i)} \arrow[d, "{\prod_i f^*}"] \\
{\K(x,\prod_i c_i)} \arrow[r, "\cong"']                  & {\prod_i \K(x,c_i)}                                 
\end{tikzcd}
   $$
   As all but the functor $f^*\colon \K(y,\prod_i c_i)\to \K(x,\prod_i c_i)$ are known to preserve the orthogonal factorization systems in the diagram above, it follows that $f_*$ also respects the orthogonal factorization system.
\end{proof}
\begin{corollary}\label{cor:pie_exactness_creation_2cat}
   Let $c_i, c,d$ be objects in a $2$-category $\K$ with PIE limits for $i\in I$. Assume that morphisms $f,g\colon c\to d$ representably preserve regular epis and finite limits, respectively. Then the following holds:
   \begin{enumerate}
       \item If $c_i$ is representably (exact) regular for each $i\in I$, then so is $\prod_{i\in I} c_i$.
       \item If $c$ and $d$ are representably regular objects, then so is $\Ins(f,g)$. Furthermore, if $f$ representably preserves the coequalizers of kernel pairs, and $c$ and $d$ are representably exact, then $\Ins(f,g)$ is representably exact as well.
       \item If $c$ and $d$ are representably (exact) regular and $\eta,\theta\colon F\Rightarrow G\colon \C\to \D$ are parallel transformations, then the equifier $\Eq(\eta,\theta)$ is representably (exact) regular.
   \end{enumerate}
\end{corollary}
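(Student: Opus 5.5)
The plan is to reduce everything to Corollary~\ref{cor:liftingregularexact} via the representable $2$-functors $\K(x,-)\colon \K\to\CAT$, in the same way Corollary~\ref{cor:pie_ofs_creation_2cat} reduces to Theorem~\ref{thm:pie_ofs_creation}. Fix an object $x$ of $\K$. Since $\K(x,-)$ sends strict PIE limits to the corresponding PIE limits in $\CAT$, we get isomorphisms
\[
\K(x,\textstyle\prod_i c_i)\cong\prod_i\K(x,c_i),\qquad \K(x,\Ins(f,g))\cong\Ins(f_*,g_*),\qquad \K(x,\Eq(\eta,\theta))\cong\Eq(\eta_*,\theta_*).
\]
Here $f_*,g_*\colon\K(x,c)\to\K(x,d)$ denote postcomposition. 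By hypothesis $f_*$ preserves regular epis (and, in the exact case, coequalizers of kernel pairs) and $g_*$ preserves finite limits. Also, $\K(x,c)$ and $\K(x,d)$ are regular (respectively exact). Corollary~\ref{cor:liftingregularexact} then shows that each hom-category $\K(x,-)$ of the limit object is regular (respectively exact).

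The remaining task is the condition on precomposition. For $h\colon x\to y$ we must show that $h^*\colon \K(y,L)\to\K(x,L)$ preserves finite limits and regular epimorphisms, where $L$ is the limit object. Precomposition commutes with the isomorphisms above, because $(-)\circ h$ is $2$-natural in the representable. So $h^*$ is identified with the functor induced on $\prod_i\K(-,c_i)$, $\Ins(f_*,g_*)$ or $\Eq(\eta_*,\theta_*)$ by the functors $h^*$ on $\K(-,c)$ and $\K(-,c_i)$.

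\textbf{Finite limits.} In the inserter case, the forgetful functor $U\colon\Ins(f_*,g_*)\to\K(-,c)$ strictly creates finite limits by Theorem~\ref{thm:pie_kan_creation}(2), since $g_*$ preserves them. Moreover $Uh^*=h^*U$ and $h^*$ preserves finite limits on $\K(-,c)$. Hence $h^*$ preserves finite limits on the inserter. The product and equifier cases work the same way, using Theorem~\ref{thm:pie_kan_creation}(1),(3).

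\textbf{Regular epimorphisms.} This is the main obstacle, since we need to know what the regular epis of the lifted category are. By the proof of Corollary~\ref{cor:liftingregularexact}, the regular OFS on $\Ins(f_*,g_*)$ is $(U^{-1}(E'),U^{-1}(M'))$ from Theorem~\ref{thm:pie_ofs_creation}. In a regular category the regular epis are exactly the extremal epis, i.e.\ the left class. So a morphism of the inserter is a regular epi if and only if its image under $U$ is one. Since $h^*$ preserves regular epis in $\K(-,c)$ and commutes with $U$, it preserves them in the inserter. The equifier and product cases are identical, using Theorem~\ref{thm:pie_ofs_creation}(1),(3).

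Exactness is purely a hom-wise condition, so it is already handled by the first step. I expect the only delicate point to be justifying that the regular-epi class in each lifted hom-category is created by $U$, which the cited OFS lifting supplies.
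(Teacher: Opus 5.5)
Your proposal is correct and follows the paper's route: apply $\K(x,-)$, which preserves PIE limits, and invoke Corollary~\ref{cor:liftingregularexact} hom-wise. The paper's one-line proof leaves the precomposition condition of Definition~\ref{def:representable_exactness} implicit; your check that $h^*$ preserves finite limits (via strict creation by $U$) and regular epis (via the lifted factorization system $(U^{-1}(E'),U^{-1}(M'))$) supplies that step correctly.
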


\begin{proof}
   The claims follow from the fact that representable $2$-functors $\K(x,-)$ preserve PIE limits and from Corollary~\ref{cor:liftingregularexact}.
\end{proof}

Proposition 3.7 in \cite{Wissmann2022Minimality} shows how a monad $T\colon \C\to \C$ preserving the left class of an orthogonal factorization system $(E,M)$ on $\C$ induces an orthogonal factorization system on the Eilenberg-Moore category $\C^T$, generalizing the original result by Linton in \cite{Linton1969}. We further generalize these results by lifting orthogonal factorization systems to arbitrary categories of bialgebras in the following corollary:

\begin{corollary}\label{cor:StructuresLiftedToBialgebras}
   Let $(\Sigma,\sigma)$ be a signature pair with a $\sigma$-theory $T$. Let $M$ be a $\Sigma$-model within a PIE-complete $2$-category $\K$. Consider the forgetful morphism $u\colon M^T\to M_0$. Then the following assertions hold:
   \begin{enumerate}
       \item Assume $M_c$ is equipped with a representable OFS $(E^c,M^c)$ for each category symbol $c$. If each arity support $M(t_1)$ and coarity support $M(t_2)$ representably preserve the left and right classes, respectively, then there exists a unique representable OFS on $M^T$ representably preserved by the projections $M^T\to M_{c_s}$. In particular, the conclusion follows if $M(B)\colon M_{\bar{c}}\to M_d$ representably preserves both left and right classes of morphisms componentwise for each $\Sigma$-functor symbol $B\colon \bar{c}\to d$. 
       \item If each $M_c$ is representably regular, and each arity support $M(t_1)$ and coarity support $M(t_2)$ representably preserve regular epimorphisms and finite limits, respectively, then $M^T$ is representably regular with $u$ representably preserving finite limits and regular epimorphisms. In particular, the conclusion follows if each $M_c$ is representably regular, $T$ is algebraic, and $M(B)$ representably preserves regular epis for each functor symbol $B$.
       \item If each $M_c$ is representably exact and $M(t_1)$ and $M(t_2)$ representably preserve coequalizers of kernel pairs and finite limits, respectively, for arity terms $t_1$ and coarity terms $t_2$, then $M^T$ is representably exact. In particular, if each $M_c$ is representably exact, $T$ is algebraic and $M(B)$ representably preserves reflexive coequalizers componentwise, then $M^T$ is exact.
   \end{enumerate}
\end{corollary}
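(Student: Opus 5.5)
The plan is to run the construction $M\mapsto M^T$ through its two PIE stages, $M_0=\prod_s M_{c_s}$, then $M^\sigma=\Ins(L_1,L_2)$ with $L_i=(M(t_i))_f\colon M_0\to\prod_f M_{c_f}$, then $M^T=\Eq((\bar p_M),(\bar q_M))$. At each stage we apply the representable lifting results, Corollary~\ref{cor:pie_ofs_creation_2cat} for (1) and Corollary~\ref{cor:pie_exactness_creation_2cat} for (2) and (3), in the same way that Corollary~\ref{cor:LimitsLiftedToBialgebras} was obtained from Theorem~\ref{thm:pie_kan_creation}.

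For (1), Corollary~\ref{cor:pie_ofs_creation_2cat}(1) gives the representable OFS on $M_0$ and on $\prod_f M_{c_f}$. The components of $L_1$ and $L_2$ are function arity and coarity supports, so $L_1$ and $L_2$ representably preserve the left and right classes respectively, since hom-categories into products are products. Part (2) of that corollary then equips $M^\sigma$ with the OFS created by $u_M$.

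For the equifier stage, the parallel 1-cells are $(M^\sigma(t_1'))$ and $(M^\sigma(t_2'))$, where $M^\sigma(t)=M(t)u_M$ and the $t_i'$ are equational arity and coarity terms. Since $u_M$ representably preserves both classes and the $M(t_i')$ preserve them by hypothesis, the composites do as well, and part (3) produces the OFS on $M^T$. Uniqueness comes from the uniqueness clauses at each stage. For the ``in particular'' clause, we induct on terms as in Corollary~\ref{cor:limitPreservationFromAtomicPreservation}. Componentwise preservation gives joint preservation, because a morphism in a product factors as a composite of morphisms varying one coordinate at a time, and both classes are closed under composition. So every $M(t)$ representably preserves both classes.

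For (2) and (3), the argument has the same shape, but we must check the variance hypotheses in Corollary~\ref{cor:pie_exactness_creation_2cat}. The inserter step needs $L_1$ to representably preserve regular epis (or coequalizers of kernel pairs) and $L_2$ to preserve finite limits. For the equifier step, Corollary~\ref{cor:liftingregularexact}(3) as stated only requires regularity of source and target. We will also record that $u$ representably preserves finite limits and regular epis. For finite limits this follows from strict creation of limits (Theorem~\ref{thm:pie_kan_creation}, applied representably). For regular epis it follows because the lifted OFS has left class $u_*^{-1}$ of the extremal epis, and these coincide with the regular epis in regular categories.

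For the ``in particular'' clauses, algebraic $T$ makes every coarity support a projection, which preserves finite limits. Regular epis in a product are componentwise, so the induction on terms gives preservation for all $M(t)$. In the exact case, reflexive coequalizers are sifted colimits, so Lemma~\ref{lem:SiftedTotalityFromComponents} upgrades componentwise preservation to preservation. Coequalizers of kernel pairs are reflexive coequalizers.

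I expect the main obstacle to be the regular-epi bookkeeping in (2). We must verify that composites $M(t)u_M$ preserve regular epis representably. This needs $u_M$ to preserve regular epis, which holds because regular epis in $\Ins$ are exactly those mapped to regular epis, an argument taken from the proof of Corollary~\ref{cor:liftingregularexact}. We must also confirm that the precomposition functors $f^*$ preserve the lifted structure, which is handled as in the proof of Corollary~\ref{cor:pie_ofs_creation_2cat}.
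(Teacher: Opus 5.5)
Your proposal is correct and follows the paper's route. The paper's proof is just the citation of Corollaries~\ref{cor:pie_ofs_creation_2cat} and~\ref{cor:pie_exactness_creation_2cat} through the stages $M_0$, $M^\sigma=\Ins(L_1,L_2)$, $M^T$, which is exactly what you spell out, including the inductions on terms for the ``in particular'' clauses.

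One small correction: Corollary~\ref{cor:liftingregularexact}(3) does not require only regularity of source and target. Its standing hypothesis also asks that $F$ preserve regular epis and $G$ preserve finite limits. So the equifier step needs $M(t_1')u_M$ and $M(t_2')u_M$ to have these properties representably, and you in fact verify this in your last paragraph.
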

\begin{proof}
   Follows from Corollaries $\ref{cor:pie_ofs_creation_2cat}$ and $\ref{cor:pie_exactness_creation_2cat}$. 
\end{proof}

Consider a symmetric monoidal category $A$ and denote by $\textbf{Comon}^{\mathrm{coc}}_A$ the category of internal cocommutative comonoids. The category of internal cocommutative Hopf algebras $\Hopf_A^{\mathrm{coc}}$ in $A$ is defined as the category $\textbf{Grp}(\textbf{Comon}^{\mathrm{coc}}_A)$ of internal groups within $\textbf{Comon}^{\mathrm{coc}}_A$. In the case where $A$ is the symmetric monoidal category of $k$-vector spaces or its dual, it has been shown that the category $\Hopf^{\mathrm{coc}}_A$ is exact (even semi-abelian) \cite{GRAN20194171, forsman2026semiabeliannessaffinegroupschemes}. The following corollary generalizes some exactness results found in \cite{bevilacqua2025coalgebraicmodelsomegagroups, gran2025hopfbracessemiabeliancategories} by showing that $(\textbf{Comon}^{\mathrm{coc}}_A)^T$ is exact for all algebraic theories $T$ extending the theory of groups, when $A$ is either the symmetric monoidal category of $k$-vector spaces or its dual for a field $k$:

\begin{corollary}
    Let $(\Sigma,\sigma)$ be a signature pair with algebraic $\sigma$-theories $T_1\subset T_2$. Assume $M$ is a $\Sigma$-model in $\CAT$, where the following holds for any category symbol $c$ and a functor symbol $B$:
    \begin{enumerate}
        \item $M_c$ is finitely complete.
        \item $M_c$ carries a fixed orthogonal factorization system which is preserved by each functor $M(B)$ componentwise.
        \item $M^{T_1}$ is a regular category whose regular epi-mono factorization system coincides with the induced orthogonal factorization system. 
    \end{enumerate}
    Then $M^{T_2}$ is regular. Furthermore, if $M^{T_1}$ is exact, each $M_c$ has reflexive coequalizers, and each $M(B)$ preserves reflexive coequalizers componentwise, then $M^{T_2}$ is exact. 
\end{corollary}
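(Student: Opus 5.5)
The plan is to exhibit $M^{T_2}$ as an equifier over $M^{T_1}$ and then apply Corollary~\ref{cor:liftingregularexact}(3). Since $T_1\subset T_2$ and both theories live over the same signature pair, $M^{T_2}$ is the full subcategory of $M^\sigma$ cut out by the equations of $T_2$. Equivalently, it is the equifier of the parallel natural transformations
\[
(\bar p_M e_1),\ (\bar q_M e_1)\colon (M^\sigma(t_1)e_1)\Rightarrow (M^\sigma(t_2)e_1)\colon M^{T_1}\to \textstyle\prod_{(p,q)\in T_2\setminus T_1} M_c ,
\]
where $e_1\colon M^{T_1}\to M^\sigma$ is the equifier inclusion. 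This holds because equifiers compose: a full subcategory cut out by two families of equations is cut out successively. By Corollary~\ref{cor:liftingregularexact}(3), it then suffices to check two things. First, the codomain $\prod_{(p,q)}M_c$ must be regular (respectively exact). Second, the domain functor $F=(M(t_1)u e_1)$ must preserve regular epis, and the codomain functor $G=(M(t_2)ue_1)$ must preserve finite limits.

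For the codomain, there is a gap. We are only told that each $M_c$ is finitely complete with an OFS, not that it is regular. I would handle this by rereading the proof of Corollary~\ref{cor:liftingregularexact}(3). That proof only uses the following facts: the equifier inclusion creates limits, so finite limits in $M^{T_2}$ come from $M^{T_1}$; and the OFS lifts along equifiers by Theorem~\ref{thm:pie_ofs_creation}(3), which needs only the OFS on $\D=\prod M_c$ and the preservation properties of $F,G$ with respect to the fixed OFS. So I would run Theorem~\ref{thm:pie_ofs_creation}(3) directly with $\C=M^{T_1}$, equipped with its regular epi--mono system, which by (3) is the induced OFS, and $\D=\prod_{(p,q)} M_c$ with the product OFS.

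For the preservation hypotheses:
\begin{enumerate}
\item $G$ preserves the right class. $T_2$ is algebraic, so each coarity term is a sort, $G$ is a projection composed with the forgetful functor, and the induced OFS on $M^{T_1}$ is by definition preserved by the forgetful functor.
\item $F$ preserves the left class. The induced OFS has left class $u^{-1}(E)$, and each term functor $M(t_1)$ preserves $E$. This follows by induction on terms exactly as in Corollary~\ref{cor:limitPreservationFromAtomicPreservation}, using componentwise preservation by each $M(B)$ and that projections and tuples of $E$-maps lie in the product class.
\end{enumerate}
This yields an OFS $(E,M)$ on $M^{T_2}$ whose right class is the monos, since the forgetful functor is faithful and creates pullbacks. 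Stability of $E$ follows because the inclusion $M^{T_2}\to M^{T_1}$ is fully faithful and creates finite limits, so pullbacks in $M^{T_2}$ are computed in $M^{T_1}$, where regular epis are stable. Hence $M^{T_2}$ is regular.

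For exactness, I take an equivalence relation $R\rightrightarrows x$ in $M^{T_2}$. It is an equivalence relation in $M^{T_1}$, hence has a coequalizer $x\to y$ there, and it is the kernel pair of that map. It remains to show that $y$ lies in $M^{T_2}$. This is the main obstacle. I would use Corollary~\ref{cor:liftingSifted}: reflexive coequalizers are sifted colimits and the $M(B)$ preserve them componentwise, so the forgetful functors from $M^{T_1}$ and $M^{T_2}$ strictly create reflexive coequalizers. The kernel pair is reflexive, so its coequalizer computed in $M_0$ lifts uniquely to both $M^{T_2}$ and $M^{T_1}$, and by uniqueness the two lifts agree. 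So $y\in M^{T_2}$, and since $M^{T_2}$ is full and closed under limits in $M^{T_1}$, $R$ is the kernel pair of $x\to y$ in $M^{T_2}$. One point to check is that $M_c$ having reflexive coequalizers is enough to apply Corollary~\ref{cor:limitPreservationFromAtomicPreservation} in $\CAT$.
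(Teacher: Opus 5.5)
Your proposal is correct and follows essentially the paper's route. The paper likewise works with the fully faithful, finite-limit-creating inclusion $M^{T_2}\to M^{T_1}$ and transports the induced ($=$ regular epi--mono) factorization system along it to get regularity; you do this transport explicitly via Theorem~\ref{thm:pie_ofs_creation}(3), which rightly sidesteps the regularity of $\prod M_c$ that Corollary~\ref{cor:liftingregularexact}(3) would have required. For exactness, you, like the paper, use Corollary~\ref{cor:liftingSifted} to place the coequalizer of the reflexive equivalence relation in $M^{T_2}$; the point you left open is fine, since strict creation only needs that coequalizer to exist in $M_0=\prod_s M_{c_s}$, where it is computed componentwise.
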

\begin{proof}
    Consider the forgetful functor $U\colon M^{T_2}\to M^{T_1}$, which is a conservative functor preserving finite limits from a finitely complete category while respecting the induced factorization systems. The regularity of $M^{T_2}$ is then clear. In the exact case, $M^{T_2}$ has reflexive coequalizers and $ U$ preserves them (Corollary~\ref{cor:liftingSifted}), and a similar argument to one in Corollary~\ref{cor:liftingregularexact} shows how the exactness is lifted from  $M^{T_1}$ to $M^{T_2}$. 
\end{proof}
\section*{Declarations}
\textbf{Funding} \\
\noindent
This research was supported by the Fonds de la Recherche Scientifique, FNRS (Belgium).

\end{document}